  \theoremstyle{plain}
    \newtheorem{theorem}{Theorem}[section]
  \newtheorem{proposition}[theorem]{Proposition}
  \newtheorem{lemma}[theorem]{Lemma}
  \newtheorem{corollary}[theorem]{Corollary}
  \newtheorem{conjecture}[theorem]{Conjecture}
  \newtheorem{problem}[theorem]{Problem}
  \newtheorem{claim}[theorem]{Claim}
\theoremstyle{definition}
  \newtheorem{definition}[theorem]{Definition}
  \newtheorem{example}[theorem]{Example}
  \newtheorem{question}[theorem]{Question}
  \newtheorem{remark}[theorem]{Remark}
\newcommand{\excise}[1]{}
\newcommand{\rdots}{\mathinner{%
  \mkern1mu\raise1pt\hbox{.}%
  \mkern2mu\raise4pt\hbox{.}%
  \mkern2mu\raise7pt\vbox{\kern7pt\hbox{.}}\mkern1mu}}
\numberwithin{equation}{section}
\def\CC{{\mathbb C}}
\def\Pet{{\rm Pet}_n}
\def\zz{ z_{ab} }
\def\bfz{ {\bf z} }
\def\module{ {\bf M} }
\def\matrx{ K }
\def\nilp{H }
\def\uu{\eta}
\newcommand{\cellsize}{15}
\newlength{\cellsz} \setlength{\cellsz}{\cellsize\unitlength}
\newsavebox{\cell}
\sbox{\cell}{\begin{picture}(\cellsize,\cellsize)
\put(0,0){\line(1,0){\cellsize}}
\put(0,0){\line(0,1){\cellsize}}
\put(\cellsize,0){\line(0,1){\cellsize}}
\put(0,\cellsize){\line(1,0){\cellsize}}
\end{picture}}
\newcommand\cellify[1]{\def\thearg{#1}\def\nothing{}%
\ifx\thearg\nothing
\vrule width0pt height\cellsz depth0pt\else
\hbox to 0pt{\usebox{\cell} \hss}\fi%
\vbox to \cellsz{
\vss
\hbox to \cellsz{\hss$#1$\hss}
\vss}}
\newcommand\tableau[1]{\vtop{\let\\\cr
\baselineskip -16000pt \lineskiplimit 16000pt \lineskip 0pt
\ialign{&\cellify{##}\cr#1\crcr}}}
\newcommand{\kellsize}{18}
\newlength{\kellsz} \setlength{\kellsz}{\kellsize\unitlength}
\newsavebox{\kell}
\sbox{\kell}{\begin{picture}(\kellsize,\kellsize)
\put(0,0){\line(1,0){\kellsize}}
\put(0,0){\line(0,1){\kellsize}}
\put(\kellsize,0){\line(0,1){\kellsize}}
\put(0,\kellsize){\line(1,0){\kellsize}}
\end{picture}}
\newcommand\kellify[1]{\def\thearg{#1}\def\nothing{}%
\ifx\thearg\nothing
\vrule width0pt height\kellsz depth0pt\else
\hbox to 0pt{\usebox{\kell} \hss}\fi%
\vbox to \kellsz{
\vss
\hbox to \kellsz{\hss$#1$\hss}
\vss}}
\newcommand\ktableau[1]{\vtop{\let\\\cr
\baselineskip -16000pt \lineskiplimit 16000pt \lineskip 0pt
\ialign{&\kellify{##}\cr#1\crcr}}}
\font\co=lcircle10
\def\jr{\smash{\raise2pt\hbox{\co \rlap{\rlap{\char'005} \char'007}}
               \raise6pt\hbox{\rlap{\vrule height5pt}}
               \raise2pt\hbox{\rlap{\hskip4pt \vrule height0.4pt depth0pt
                width5.7pt}}
               \raise2pt\hbox{\rlap{\hskip-9.5pt \vrule height.4pt depth0pt
                width6.2pt}}
               \lower6pt\hbox{\rlap{\vrule height4.5pt}}}}
\def\rj{\smash{\raise2pt\hbox{\co \rlap{\rlap{\char'004} \char'006}}
               \raise6pt\hbox{\rlap{\vrule height5pt}}
               \raise2pt\hbox{\rlap{\hskip4pt \vrule height0.4pt depth0pt
                width5.7pt}}
               \raise2pt\hbox{\rlap{\hskip-9.5pt \vrule height.4pt depth0pt
                width6.2pt}}
               \lower6pt\hbox{\rlap{\vrule height4.5pt}}}}
\def\je{\smash{\raise2pt\hbox{\co \rlap{\rlap{\char'005}
                \phantom{\char'007}}}\raise6pt\hbox{\rlap{\vrule height5pt}}
               \raise2pt\hbox{\rlap{\hskip-9.5pt \vrule height.4pt depth0pt
                width6.2pt}}}}
\def\ej{\smash{\raise2pt\hbox{\co \rlap{\rlap{\char'004}\phantom{\char'006}}}
               \raise2pt\hbox{\rlap{\hskip-9.5pt \vrule height.4pt depth0pt
                width6.2pt}}
               \lower6pt\hbox{\rlap{\vrule height4.5pt}}}}
\def\er{\smash{\raise2pt\hbox{\co \rlap{\rlap{\phantom{\char'005}} \char'007}}
               \raise2pt\hbox{\rlap{\hskip4pt \vrule height0.4pt depth0pt
                width5.7pt}}
               \lower6pt\hbox{\rlap{\vrule height4.5pt}}}}
\def\re{\smash{\raise2pt\hbox{\co \rlap{\rlap{\phantom{\char'004}} \char'006}}
               \raise6pt\hbox{\rlap{\vrule height5pt}}
               \raise2pt\hbox{\rlap{\hskip4pt \vrule height0.4pt depth0pt
                width5.7pt}}}}
\def\+{\smash{\lower6pt\hbox{\rlap{\vrule height17pt}}
                \raise2pt
                \hbox{\rlap{\hskip-9pt \vrule height.4pt depth0pt
                width18.7pt}}}}
\def\hor{\smash{\raise2pt\hbox{\rlap{\hskip-9.5pt \vrule height.4pt depth0pt
                width19.2pt}}}}
\def\ver{\smash{\lower6pt\hbox{\rlap{\vrule height17pt}}}}
\def\ho{\smash{\hbox{\rlap{\vrule height5pt}}
                \raise2pt
                \hbox{\rlap{\hskip-9pt \vrule height.4pt depth0pt
                width18.7pt}}}}
\def\textcross{\ \smash{\lower4pt\hbox{\rlap{\hskip4.15pt\vrule height14pt}}
                \raise2.8pt\hbox{\rlap{\hskip-3pt \vrule height.4pt depth0pt
                width14.7pt}}}\hskip12.7pt}
\def\textelbow{\ \hskip.1pt\smash{\raise2.75pt%
                \hbox{\co \hskip 4.15pt\rlap{\rlap{\char'004} \char'006}
                \lower6.8pt\rlap{\vrule height3.5pt}
                \raise3.6pt\rlap{\vrule height3.5pt}}
                \raise2.8pt\hbox{%
                  \rlap{\hskip-7.15pt \vrule height.4pt depth0pt width3.5pt}%
                  \rlap{\hskip4.05pt \vrule height.4pt depth0pt width3.5pt}}}
                \hskip8.7pt}
\begin{document}
\pagestyle{plain}
\title{Patch ideals and Peterson varieties}
\author{Erik Insko}
\address{Department of Chemistry and Mathematics\\
Florida Gulf Coast University\\
Fort Myers, FL 33965-6565}
\email{einsko@fgcu.edu}
\author{Alexander Yong}
\address{Dept. of Mathematics\\
University of Illinois at Urbana-Champaign\\
Urbana, IL 61801}
\email{ayong@math.uiuc.edu}
\keywords{Patch ideal, Peterson variety, Springer fiber, Richardson variety}

\date{August 7, 2012}

\begin{abstract}
\emph{Patch ideals} encode neighbourhoods of a variety in $GL_n/B$.
For Peterson varieties we determine generators for these ideals and show
they are complete intersections,
and thus Cohen-Macaulay and Gorenstein. Consequently, we
\begin{itemize}
\item combinatorially describe the singular locus of the Peterson variety;
\item give an explicit equivariant $K$-theory localization formula; and
\item extend some results of [B.~Kostant '96] and of
D.~Peterson to intersections of Peterson varieties with Schubert varieties.
\end{itemize}
We conjecture that the tangent cones are Cohen-Macaulay,
and that their $h$-polynomials are nonnegative and upper-semicontinuous.

Similarly, we use patch ideals to briefly analyze
other examples of torus invariant subvarieties of $GL_n/B$,
including Richardson varieties and Springer fibers.
\end{abstract}

\maketitle

\section{Introduction}

\subsection{Overview}
The {\bf Peterson variety} is defined as
\[{\rm Pet}_n=\left\{F_{\bullet}\in {\rm Flags}({\mathbb C}^n): N\cdot F_i\subset F_{i+1}\right\},\]
where $N$ is a fixed but arbitrary regular nilpotent matrix (up to isomorphism, $\Pet$ is independent of the choice of
conjugate of $N$),
and ${\rm Flags}({\mathbb C}^n)$ is the variety of complete flags of subspaces
$F_{\bullet}:\langle 0\rangle \subset F_1 \subset F_2\subset\cdots\subset F_{n-1}\subset {\mathbb C}^n$.

 Since its introduction by D.~Peterson in the 1990's, in connection to quantum cohomology of flag varieties, this space has
 attracted significant attention;
 see, for instance, \cite{kostant, reitsch, Tym07, harada.tymoczko} and the references therein.

While Peterson varieties are known to be singular for $n\geq 3$ \cite[Theorem~6]{kostant},
problems about their local structure exist:

\begin{problem}
\label{prob:A}
Give an explicit combinatorial description of the singular locus.
\end{problem}

\begin{problem}
\label{prob:B}
Determine properties (e.g., Cohen-Macaulay, Gorenstein, normal) that hold for all singular points.
\end{problem}

\begin{problem}
\label{prob:C}
Give an explicit combinatorial rule for discrete
local singularity measures
(e.g., equivariant $K$-theory localizations, Hilbert-Samuel multiplicity).
\end{problem}

Such problems raise two meta-questions about varieties $X$ defined inside $GL_n/B$.

First, what methods are available to study
local features of a given $X \subseteq GL_n/B$?
This question arises since a number of significant varieties
are defined to sit in the flag variety. Perhaps most prominent are the Schubert varieties. However, there are also the
Richardson varieties, torus orbit closures ${\overline{Tx}}$, and the family of Hessenberg varieties (which include the Springer and Peterson varieties), among others.
One desires uniform and flexible approaches for theoretical and computational analysis.

Indeed, important and influential techniques have been developed that are used to study such $X$. For example,
\emph{Standard Monomial Theory}, a survey of which is found in \cite{LakMag},
\emph{GKM-theory} \cite{GKM}, \emph{Frobenius splitting}, a recent book for which is
\cite{Brion.Kumar},
and use of the \emph{Peterson translate} \cite{Carrell.Kuttler}.
These techniques are especially effective, but not necessarily definitive, for Schubert varieties.

Therefore, we revisit this meta-question by examining the general use of a simple
choice of local coordinates and equations encoded in what we call the \emph{patch ideal}.
We emphasize that in the literature, one finds many examples of similar ideas, used for
instances of $X$ (and usually $X$ is a Schubert variety). We provide
a brief review in
Section~2, where we also
offer a synthesis of a selection of these ideas. This facilitates our common discussion of Peterson varieties,
specific earlier work on Schubert varieties \cite{WY:govern, WY:KL, Li.Yong, Li.Yong2}, and some other
$X$.
In our main example of Peterson varieties, we show how to apply these ideas to provide some answers to the above problems.

Second, what are some candidate properties for
comparing different $X \subseteq GL_n/B$?
All of the aforementioned examples of $X$
share the common feature of an action of a (sub)torus of the invertible diagonal matrices $T\subset GL_n$.
Deeper commonalities among some of them are known; a partial list of well-studied
properties is given in Problem~\ref{prob:B}.
We conjecture that a number of interesting $X$ (including Peterson and Schubert varieties) share
common properties concerning their tangent cones.

Turning to our main results, we give coordinates and equations that scheme-theoretically
cut out a natural open neighbourhood (the \emph{patch}) of any point of $X={\rm Pet}_n$. We prove
these equations define a radical ideal (the patch ideal). To do this, we show the ideal is a complete intersection and hence Cohen-Macaulay. Thus, the
zero scheme is reduced if it is generically reduced. To prove the latter, we
establish the existence of a smooth point.

Consequently, ${\rm Pet}_n$ is a local complete intersection, and therefore Cohen-Macaulay and Gorenstein. J.~Carrell informed us that at least some of these properties
(see Corollary~\ref{thm:first}) were known to D.~Peterson (unpublished);
we also offer a generalization, see Theorem~\ref{thm:lciextended}.
In addition, B.~Kostant \cite[Theorem~14]{kostant} partially described the
singular locus of ${\rm Pet}_n$. We give
a combinatorial description of the singular locus. We also obtain
an explicit equivariant $K$-theory localization formula, by approaching the question in terms
of combinatorial commutative algebra.

\subsection{Peterson varieties and the main results}
Identify
${\rm Flags}({\mathbb C}^n)\cong GL_n/B$,
where we let
$B\subset GL_n$ denote the subgroup of invertible upper triangular matrices. A coset $gB$ corresponds to
the flag $F_{\bullet}$ where $F_i$ is the subspace spanned by the vectors represented by the leftmost $i$ columns of any coset
representative of $gB$.

Let $w=w_P$ be the maximal Coxeter length element
for the parabolic (Young) subgroup
\begin{equation}
\label{eqn:weyl}
W_P\cong S_{i_1}\times S_{i_2}\times\cdots\times S_{i_k}, \mbox{\ \ \ \  $i_1+i_2+\cdots+i_k=n$}
\end{equation}
of the symmetric group $S_n$, in the latter's role as the Weyl group $W$ of $GL_n$.
Let
\begin{equation}
\label{eqn:BPdef}
U_P=B^{(1)}\oplus B^{(2)}\oplus\cdots\oplus B^{(k)}\subset U
\end{equation}
be the $(n-k)$-dimensional subgroup of unipotent upper triangular matrices $U$, consisting of block matrices
where the block $B^{(j)}$ is an $i_j\times i_j$ upper triangular unipotent matrix with equal
entries on each superdiagonal (see Example~\ref{example:BP}).
There is a decomposition
\begin{equation}
\label{eqn:thedecomposition}
{\rm Pet}_n=\coprod_{w_P}{\mathcal J}_{w_P, \Pet},
\end{equation}
of ${\rm Pet}_n$ into affine cells indexed by the parabolic subgroups $P$ contained in $B$,
where
\begin{equation}
\label{eqn:Jaffine}
{\mathcal J}_{w_P, \Pet}=U_P\cdot w_P B\cong {\mathbb C}^{n-k};
\end{equation}
see Lemma~\ref{prop:Petjust}(IV).

Our first result describes the singular locus of $\Pet$. This
answers Problem~\ref{prob:A}:

\begin{theorem}
\label{thm:singlocus}
The singular locus of $\Pet$ is given by
\[{\rm Sing}(\Pet)=\coprod_{w_P}{\mathcal J}_{w_P,\Pet},\]
where the union is over all $w_P$ satisfying any of the equivalent
conditions {\rm (I)}--{\rm (III)} below:
\begin{itemize}
\item[(I)] $w_P B$ is singular in ${\rm Pet}_n$;
\item[(II)] $w_P$ is not one of the permutations (written in one-line notation):
\begin{itemize}
\item[$\bullet$] $n \ n-1 \ n-2 \cdots 3 \ 2 \ 1 \ \mbox{(i.e., the maximal Coxeter length element $w_0\in S_n$);}$
  \item[$\bullet$] $1 \ n \ n-1 \ \cdots \ 3 \ 2$; or
  \item[$\bullet$] $n-1 \ n-2 \ \cdots 1\ n$.
\end{itemize}
\item[(III)] $w_P$ contains at least one of the patterns $123$ or $2143$.
\end{itemize}

\end{theorem}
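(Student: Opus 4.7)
The plan is to work in three stages: reduce singularity on each cell to its single fixed point $w_PB$, compute the Jacobian of the Peterson patch ideal at $w_PB$, and match the resulting rank condition to the pattern condition. For the reduction, I use that $\Pet$ carries a $\mathbb{C}^\ast$-action whose fixed points are exactly the $w_PB$ and whose Bia\l ynicki--Birula cells coincide with the $\mathcal{J}_{w_P,\Pet}$ in (\ref{eqn:thedecomposition}). Since smoothness is open and $\mathbb{C}^\ast$-invariant, each point of $\mathcal{J}_{w_P,\Pet}$ is smooth whenever $w_PB$ is. For the opposite implication I would use that $\mathcal{J}_{w_P,\Pet}$ is a single orbit of the group $U_P$ of (\ref{eqn:BPdef}) acting on $\Pet$, which requires a short check that $U_P$ preserves the Peterson flag conditions. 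Thus each cell lies entirely in the smooth locus or entirely in the singular locus, and Theorem~\ref{thm:singlocus} reduces to the equivalence of (I), (II), (III).

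\textbf{Patch computation.} The earlier sections cut out $\Pet$ at $w_PB$ as an explicit complete intersection in the standard affine patch of $\Flags(\mathbb{C}^n)$, of codimension $\binom{n}{2}-(n-1)$. By the Jacobian criterion, $w_PB$ is smooth in $\Pet$ iff the linear parts of the patch-ideal generators at the origin span a subspace of the maximum possible rank. These linear parts assemble into a combinatorial matrix with entries in $\{0,\pm 1\}$, whose structure is governed by the one-line notation of $w_P$ and by the fixed regular nilpotent $N$.

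\textbf{Matching patterns.} By (\ref{eqn:weyl}), $w_P$ in one-line notation is a concatenation of decreasing runs of lengths $i_1,\ldots,i_k$. A short case analysis shows that the $w_P$ avoiding both $123$ and $2143$ are exactly the three families of (II): the cases $k=1$, $k=2$ with $i_1=1$, and $k=2$ with $i_k=1$. This proves (II)$\Leftrightarrow$(III). For each of the three exceptional $w_P$ one verifies directly that the Jacobian from the patch computation has full rank, giving smoothness. For every other $w_P$, either $k\geq 3$ produces a $123$-occurrence (at minimal entries of three distinct blocks) or $k=2$ with both blocks of size $\geq 2$ produces a $2143$-occurrence across the block boundary; in each such case one exhibits an explicit kernel vector for the Jacobian, proving that $w_PB$ is singular and hence, by the first stage, the whole cell is.

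The main obstacle is the patch computation in the second stage: organizing the Jacobian so that pattern occurrences translate transparently into kernel vectors. The reduction in the first stage is routine given the cell decomposition and the $\mathbb{C}^\ast$-action, and the combinatorial equivalence (II)$\Leftrightarrow$(III) is a direct inspection of concatenations of decreasing runs, so the mathematical weight rests on packaging the linear algebra and the pattern combinatorics uniformly.
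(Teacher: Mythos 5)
Your overall strategy (reduce to the fixed points, then run the Jacobian criterion on the patch-ideal generators at $w_PB$, then match the rank degeneracy to the patterns $123$ and $2143$) is the same as the paper's, and the half of your reduction that uses the $\mathbb{C}^{\ast}$-action is sound: since $\lim_{t\to\infty}s(t)\cdot gB=w_PB$ for every $gB\in{\mathcal J}_{w_P,\Pet}$, the fixed point is the ``most singular'' point of its cell, so smoothness of $w_PB$ propagates to the whole cell. Your (II)$\Leftrightarrow$(III) analysis of concatenations of decreasing runs is also correct.

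However, the other half of your reduction contains a genuine gap. You propose to show that singularity of $w_PB$ forces singularity of every $bw_PB$ in the cell by ``a short check that $U_P$ preserves the Peterson flag conditions.'' That check fails: $U_P$ does \emph{not} act on $\Pet$ in general (only the full group $U'=U_P$ for $W_P=S_n$ centralizes $N$ and hence acts), and the paper explicitly flags this right after the theorem statement --- if every $U_P$ acted on $\Pet$, condition (I) would be obvious, ``however, this is not actually the case.'' The cell ${\mathcal J}_{w_P,\Pet}=U_P\cdot w_PB$ is a $U_P$-orbit as a subset of $GL_n/B$, but the elements of $U_P$ do not preserve $\Pet$, so they do not furnish local isomorphisms of $\Pet$ between points of the cell; and the $\mathbb{C}^{\ast}$-contraction argument only runs in the direction you already used. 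The paper closes this gap differently: it re-centers coordinates at $bw_PB$ via the affine substitution $z_{ij}\mapsto z_{ij}+\sum_{i<k}b_{ik}z_{kj}$ and observes that the same Jacobian degeneracies (a row of zeros in the $123$ case, two proportional rows in the $2143$ case) persist verbatim after this substitution, so the Jacobian argument must be run at every point of the cell, not just at the fixed point. Your plan needs this (or an equivalent) replacement for the nonexistent $U_P$-action before the ``singular'' direction is complete.
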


If it were true that every $U_P$ acts on $\Pet$, the condition (I) would be obvious (see Lemma~\ref{lemma:goodcases}(I)).
However, this is not actually the case.
Although (II)$\Leftrightarrow$(III) is easy,
the pattern criterion (III) is reminiscent of the results about singularities of
Schubert varieties; see \cite[Chapter 8]{Billey.Lakshmibai} and the references therein.

The one-dimensional torus
$S\!=\!\{s(t)={\rm diag}(t,t^2,\ldots,t^n):t\in {\mathbb C}^{\star}\}\subseteq~T$
acts on $\Pet$ (see Lemma~\ref{prop:Petjust}(I)). Thus, there is an $S$-equivariant $K$-theory class
$[{\mathcal O}_{\Pet}]\in K_S(GL_n/B)$,
where ${\mathcal O}_{\Pet}$ is the ($S$-equivariant) structure sheaf of $\Pet$.
It makes sense to localize this class at the $S$-fixed points $w_P B$. This localization satisfies
\[[{\mathcal O}_{\Pet}]|_{w_P B}\in K_S(w_PB)\cong {\mathbb Z}[\chi^{\pm 1}]\]
where $\chi:S\to {\mathbb C}^{\star}$ is the
character $s(t)\mapsto t$ of $S$.

\begin{theorem}
\label{thm:localize}
The following localization formula holds:
\[[{\mathcal O}_{\rm Pet_n}]_S|_{w_P B}=\prod_{j=1}^{n-2}\prod_{k\neq w_P (i), 1\leq i\leq j+1}\left ( 1-\chi^{k+1-w_P(j)} \right ) \in K_S(w_P B)\cong {\mathbb Z}[\chi^{\pm 1}].\]
\end{theorem}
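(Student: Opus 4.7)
My plan is to combine the complete intersection property of the patch ideal $I_{w_P,\Pet}$ (the main structural result announced in the introduction) with the standard equivariant Koszul resolution formula. The patch $\mathcal{N}_{w_P}$ at $w_P B$ is a smooth affine $\binom{n}{2}$-space on which $S$ acts linearly, and $I_{w_P,\Pet}$ is cut out inside it by a regular sequence of $\binom{n-1}{2}$ polynomials, matching the codimension $\binom{n}{2}-(n-1)=\binom{n-1}{2}$ of $\Pet_n$ in $GL_n/B$. The Koszul complex on this regular sequence is an $S$-equivariant free resolution of $\mathcal{O}_{\Pet_n}$ near $w_P B$, and therefore
\[
[\mathcal{O}_{\Pet_n}]_S|_{w_P B}=\prod_f\bigl(1-\chi^{\mathrm{wt}_S(f)}\bigr),
\]
the product being taken over the generators $f$ of $I_{w_P,\Pet}$. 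The theorem then reduces to identifying the $S$-weight of each generator.

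First I would recall from the preceding construction the explicit form of the generators: they are indexed by pairs $(j,k)$ with $1\le j\le n-2$ and $k\notin\{w_P(1),\ldots,w_P(j+1)\}$, each arising as the $(k,j+1)$-entry of the matrix equation $N\cdot F_j\subseteq F_{j+1}$ expressed in the patch coordinates $z_{ab}$. The count $\sum_{j=1}^{n-2}(n-j-1)=\binom{n-1}{2}$ confirms that these form a regular sequence of the right length.

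Next I would compute the $S$-weight of the generator indexed by $(j,k)$. The one-parameter subgroup $s(t)=\mathrm{diag}(t,t^2,\ldots,t^n)$ acts on $\mathcal{N}_{w_P}$ by conjugation through $w_P$; a direct root computation then shows that each coordinate $z_{ab}$ is an $S$-eigenvector of weight determined by $w_P(a)-w_P(b)$ (with the sign fixed by the patch's parametrization). Combined with the identity $s(t)\,N\,s(t)^{-1}=t^{-1}N$ for the regular nilpotent $N=\sum E_{i,i+1}$, this lets one verify that the generator for $(j,k)$ is $S$-homogeneous of weight exactly $k+1-w_P(j)$. Homogeneity itself is automatic from the $S$-invariance of $\Pet_n$; only the numerical weight must be read off a single monomial summand.

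Assembling the Koszul product over the indexing set then gives
\[
\prod_{j=1}^{n-2}\prod_{\substack{k\neq w_P(i)\\ 1\le i\le j+1}}\bigl(1-\chi^{k+1-w_P(j)}\bigr),
\]
as claimed. The main obstacle is this weight bookkeeping in the third step: one must be meticulous about sign conventions for the $S$-action (whether the natural character is $\chi^\mu$ or $\chi^{-\mu}$, and whether $S$ acts on the patch directly or after conjugation by $w_P$) to land on exponent $k+1-w_P(j)$ rather than its negative. Everything else follows mechanically from the complete intersection property already in hand.
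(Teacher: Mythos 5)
Your overall strategy coincides with the paper's: invoke the complete intersection structure of $I_{w_P,\Pet}$ from Theorem~\ref{thm:main1}, use the fact that the $K$-polynomial of a graded complete intersection is $\prod_i(1-\chi^{\deg g_i})$ (the paper cites \cite[Exercise~8.12]{Miller.Sturmfels} rather than writing out the Koszul complex, but it is the same mechanism), and then identify the $S$-degree of each generator $g_{k,j}$. So the architecture is sound and matches the paper.

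The problem is in the step you yourself flag as the crux, and it is not merely a sign convention. The $S$-weight of the coordinate $z_{ab}$ (the entry in matrix position $(a,b)$ of $Z^{(w_P)}$) is $a-w_P(b)$, not ``determined by $w_P(a)-w_P(b)$'': the torus scales row $a$ of $w_PU_-$ by $t_a$, and the renormalization keeping the pivot $z_{w_P(b),b}=1$ divides column $b$ by $t_{w_P(b)}$, giving $\deg(z_{ab})=t_a-t_{w_P(b)}$, which coarsens to $a-w_P(b)$ under $S$. With the weights as you state them, the linear term $-z_{k+1,j}$ of $g_{k,j}$ would have weight $w_P(k+1)-w_P(j)$, which does not equal $k+1-w_P(j)$ in general, so the asserted conclusion would not follow from your premises. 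Moreover, even after fixing the grading, you do not actually exhibit a monomial of $g_{k,j}$ and compute its degree; the paper does this via the summand $\alpha_{j,j+1}z_{k,j+1}$, using that $z_{w_P(j+1)+1,j}$ occurs in $\alpha_{j,j+1}$ when $w_P(j+1)<n$, and handling the case $w_P(j+1)=n$ separately through the term $\alpha_{j,1}z_{n,1}z_{k,j+1}$ with $\alpha_{j,1}=z_{w_P(1)+1,j}$. That case analysis (or the observation that $-z_{k+1,j}$ itself has degree $k+1-w_P(j)$ under the correct grading) is the content of the proof and needs to be supplied.
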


Theorem~\ref{thm:localize} addresses Problem~\ref{prob:C} for
only one of the available numerical local invariants. For example, we do not know
a formula for the Hilbert-Samuel multiplicities of ${\rm Pet}_n$. However, we will be able to at least
compute some of them (using (P.V) of Section~2). This has been effective enough to support some new conjectures about Peterson varieties.

We will deduce Theorems~\ref{thm:singlocus} and~\ref{thm:localize}
from our next result, providing local
coordinates and equations for $\Pet$.
Let $Z^{(w_P)}$ denote the generic matrix in $ w_P U_{-}$ where $U_{-}$ is the subgroup of unipotent matrices
in the group of invertible lower triangular matrices $B_{-}$. This matrix has
$z_{ij}$ placed in the usual matrix coordinate position and
\begin{equation}
\label{eqn:pivot}
z_{w_P(j),j}=1 \mbox{\ and \ } z_{ij}=0 \mbox{\ if $j>w_P^{-1}(i)$.}
\end{equation}
Let $\bfz=\{\zz\}$ be the collection of variables not specialized by (\ref{eqn:pivot}).
Also, let
$Z_j=[{z}_{1j} \ {z}_{2j} \ \cdots \ {z}_{nj}]^t$
denote the $j^{th}$ column of $Z^{(w_P)}$.
See Example~\ref{exa:2143general} below.

Fix $N$ to be the standard regular nilpotent $n\times n$ matrix consisting of $1$'s on the main superdiagonal and $0$'s elsewhere.
Consider the following system of linear equations in the unknowns $\{\alpha_{j,\ell}\}$:
\begin{equation}
\label{eqn:bigone}
N\cdot Z_j=\alpha_{j,1}Z_1+\alpha_{j,2}Z_2+\cdots +\alpha_{j,j+1}Z_{j+1}, \ \ \ \mbox{ \ for $1\leq j\leq n-2$.}
\end{equation}

For each $j$, label the equation in the $k$-th row of (\ref{eqn:bigone}) by
\begin{equation} \label{gformula} f_{k,j}: \ \ \
\alpha_{j,1} {z}_{k,1} + \cdots + \alpha_{j,j+1} {z}_{k, j+1} = {z}_{k+1,j},
\end{equation}
where $1\leq j\leq n-2$ and $1\leq k\leq n$. We use the
convention ${z}_{n+1,j}=0$.

Solving for the unknowns
$\alpha_{j,1},\alpha_{j,2},\ldots,\alpha_{j,j+1}\in {\mathbb C}[{\bfz}]$,
in that order, it is straightforward to check that
the subsystem of (\ref{eqn:bigone}), consisting of equations $f_{k,j}$ where $k=w_P(\ell)$ and
$1\leq \ell\leq j+1$, has a unique solution, recursively expressed as:
\begin{equation}
\label{eqn:alphaformula}
	\alpha_{j,\ell} = {z}_{w_P(\ell)+1,j} - \left(\sum_{t=1}^{\ell-1} \alpha_{j,t} \ {z}_{w_P(\ell),t}\right)\in {\mathbb C}[{\bfz}].
\end{equation}
Consider the ${n-1\choose 2}$ equations $f_{k,j}$ where
$k\neq w_P(\ell)$ for $1\leq \ell\leq j+1$. Let
\[g_{k,j}=\alpha_{j,1} {z}_{k,1} + \cdots + \alpha_{j,j+1} {z}_{k, j+1} -{z}_{k+1,j}\in {\mathbb C}[\bfz]\]
be the polynomial obtained from
$f_{k,j}$ after substituting for $\alpha_{j,\ell}$ with
the unique solution given by (\ref{eqn:alphaformula}). Define the ideal
\begin{equation}
\label{eqn:theideal}
I_{w_P,\Pet}=\langle g_{k,j}:k\neq w_P(\ell), 1\leq \ell\leq j+1\rangle\subseteq {\mathbb C}[\bfz]
\end{equation}
and set
$Y_{w_P,\Pet}={\rm Spec}\left({\mathbb C}[\bfz]/I_{w_P,\Pet}\right)$.

The following result, in the terminology of Section~2, shows
that $I_{w_P,\Pet}$ is a patch ideal:
\begin{theorem}
\label{thm:main1}
$Y_{w_P,\Pet}$ is reduced and it is
isomorphic to the affine open neighbourhood
${\mathcal N}_{w_P,\Pet}=w_PB_{-}B/B\cap \Pet$ of $w_P B\in\Pet$.
\end{theorem}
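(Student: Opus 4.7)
The plan has two components: first identify $V(I_{w_P,\Pet})$ set-theoretically with $\mathcal{N}_{w_P,\Pet}$, then show $I_{w_P,\Pet}$ is radical, upgrading that identification to a scheme-theoretic isomorphism.

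For the set-theoretic identification, a point $Z^{(w_P)}(\bfz)$ in the chart $w_P B_{-} B/B\cong\mathbb{A}^{\binom{n}{2}}$ represents a flag in $\Pet$ exactly when $N\cdot Z_j\in\mathrm{span}(Z_1,\ldots,Z_{j+1})$ for $1\leq j\leq n-2$, equivalently when the linear system (\ref{eqn:bigone}) admits a solution in $\{\alpha_{j,\ell}\}$. The pivot pattern (\ref{eqn:pivot}) forces the rows indexed by $w_P(1),\ldots,w_P(j+1)$ to determine the $\alpha_{j,\ell}$'s uniquely by the recursion (\ref{eqn:alphaformula}); substituting those values into the remaining $\binom{n-1}{2}$ rows produces exactly the generators $g_{k,j}$. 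Hence $V(I_{w_P,\Pet})=\mathcal{N}_{w_P,\Pet}$ as sets.

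I would next show that $I_{w_P,\Pet}$ is a complete intersection. With $\binom{n-1}{2}$ generators in a polynomial ring of Krull dimension $\binom{n}{2}$, Krull's Hauptidealsatz yields $\dim Y_{w_P,\Pet}\geq n-1$ on every irreducible component. On the other hand $(Y_{w_P,\Pet})_{\mathrm{red}}=\mathcal{N}_{w_P,\Pet}$ is an open subvariety of $\Pet$, and the cell decomposition (\ref{eqn:thedecomposition})--(\ref{eqn:Jaffine}) shows $\dim\Pet=n-1$ (realized by the top cell $\mathcal{J}_{w_0,\Pet}\cong\mathbb{C}^{n-1}$). The two bounds match, so $(g_{k,j})$ is a regular sequence and $\mathbb{C}[\bfz]/I_{w_P,\Pet}$ is Cohen-Macaulay.

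The main obstacle is proving reducedness. Since a Cohen-Macaulay ring is reduced iff generically reduced, it suffices to exhibit one closed point $p\in V(I_{w_P,\Pet})$ at which the Jacobian $\bigl[\partial g_{k,j}/\partial z_{r,s}\bigr](p)$ has its maximal possible rank $\binom{n-1}{2}$; note that complete intersection alone does not force this, as the toy example $V(x^2)\subset\mathbb{A}^1$ shows. My plan is to exploit the structure of the generators in (\ref{gformula})--(\ref{eqn:alphaformula}): each $g_{k,j}$ carries a distinguished term $-z_{k+1,j}$ alongside corrections of strictly higher filtration, so at a carefully chosen smooth point of $\Pet$ inside the patch---for instance a suitably generic point of the affine cell $\mathcal{J}_{w_P,\Pet}\subseteq \mathcal{N}_{w_P,\Pet}$, or one inherited from the dense smooth top cell---the differentials $\{dg_{k,j}(p)\}$ should form a linearly independent set. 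Once such a transverse point is produced, Cohen-Macaulayness promotes generic reducedness to reducedness, so $I_{w_P,\Pet}$ is radical and the set-theoretic identification becomes the claimed scheme-theoretic isomorphism $Y_{w_P,\Pet}\cong\mathcal{N}_{w_P,\Pet}$.
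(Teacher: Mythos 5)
Your set-theoretic identification of $V(I_{w_P,\Pet})$ with ${\mathcal N}_{w_P,\Pet}$ and your complete-intersection/Cohen--Macaulay step both match the paper, and you correctly isolate the real difficulty: generic reducedness does not follow from the complete-intersection property alone, so one must exhibit a closed point where the Jacobian of the \emph{given} generators has rank $\binom{n-1}{2}$. But that is exactly the step you do not carry out, and the mechanism you hint at would not work as stated. The ``distinguished term $-z_{k+1,j}$'' is absent for many generators (e.g.\ for $k=n$ one has $z_{n+1,j}=0$ by convention, and $z_{k+1,j}$ may be specialized to $0$ or $1$ by (\ref{eqn:pivot})); moreover $z_{k+1,j}$ lies in column $j$ and so can reappear inside the $\alpha_{j,\ell}$'s of other generators, so its Jacobian column is not supported on the single row $g_{k,j}$. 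The ``corrections of higher filtration'' heuristic is really an argument about the origin $w_PB$ --- but the origin is typically a \emph{singular} point (that is the content of Theorem~\ref{thm:singlocus}), so the Jacobian there does not have full rank. Likewise, a generic point of the cell ${\mathcal J}_{w_P,\Pet}$ is no better than $w_PB$ itself, since by Theorem~\ref{thm:singlocus} smoothness is constant on that cell. Finally, even a point $p$ that is smooth on the \emph{variety} ${\mathcal N}_{w_P,\Pet}$ need not be a point where your chosen generators have full-rank Jacobian (your own $V(x^2)$ example makes this point), so ``inheriting'' a smooth point from the dense top cell is not by itself a proof.

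The paper closes this gap with two concrete inputs you are missing. First, Lemma~\ref{lemma:alphaleft} ($\alpha_{s,q}$ involves only variables from columns $\leq s$) shows that $z_{k,j+1}$ --- which is guaranteed unspecialized precisely because $g_{k,j}$ is a generator --- occurs in $g_{k,j}$ only through the summand $\alpha_{j,j+1}z_{k,j+1}$ and occurs in no $g_{k',j}$ with $k'\neq k$ nor in any $g_{k',j'}$ with $j'<j$; hence $\partial g_{k,j}/\partial z_{k,j+1}=\alpha_{j,j+1}({\bf z})$ and the Jacobian is block-triangular in a suitable ordering. Second, one must produce a point $p\in Y_{w_P,\Pet}$ with $\alpha_{j,j+1}(p)\neq 0$ for all $j$; the paper does this by observing that $Z(\alpha_{j,j+1})\cap Y_{w_P,\Pet}$ is a patch of the regular nilpotent Hessenberg variety $H_j$, which has dimension $n-2$ by the Sommers--Tymoczko dimension formula, so $\prod_j\alpha_{j,j+1}$ cannot vanish identically on the $(n-1)$-dimensional $Y_{w_P,\Pet}$. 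Without some substitute for this dimension count, your argument does not yield the required transverse point.
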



\begin{example}
\label{exa:2143general}
Consider the point $w_PB\in {\rm Pet}_4$ where
\[w_P=\left(\begin{matrix}
0 &1 & 0 & 0\\
1 & 0 & 0 & 0\\
0 & 0 & 0 &1 \\
0 & 0 & 1 & 0
\end{matrix}\right) \mbox{\ \ and hence \ \ }
Z^{(w_P)}=
\begin{pmatrix}   {z}_{11}  &          1  &         0    &       0 \\
   1 &          0 &          0  &         0 \\
{z}_{31} & {z}_{32} &    {z}_{33} &         1 \\       {z}_{41} &        {z}_{42}    &       1   &        0
\end{pmatrix}.\]
Here
$(\alpha_{11},\alpha_{12})=(z_{31},1-z_{31}z_{11}) \text{\ and \ }
(\alpha_{21},\alpha_{22},\alpha_{23})=(z_{32},-z_{32}z_{11},-z_{41}z_{32}+z_{42}z_{32}z_{11}))$.
Now, $g_{31},g_{41}$ and $g_{32}$ generate the ideal, i.e.,
\begin{multline}\nonumber
I_{w_P,{\rm Pet}_4}=\langle z_{41}-z_{32}-z_{31}^2+z_{31}z_{32}z_{11}, \
z_{41} z_{31}+z_{42}-z_{42}z_{31}z_{11}, \ \\ \nonumber
z_{42}-z_{31}z_{32}+z_{32}^2 z_{11}+z_{41}z_{32}z_{33}-z_{42}z_{32}z_{33}z_{11}
\rangle. \qed \end{multline}
\end{example}

The following provides a partial answer to Problem~\ref{prob:B}:

\begin{corollary}
\label{thm:first}
${\rm Pet}_n$ is a local complete intersection (and hence is Cohen-Macaulay and Gorenstein). It is normal if and only if $n \leq 3$.
\end{corollary}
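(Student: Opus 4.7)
The plan is to deduce everything from Theorem~\ref{thm:main1} together with dimension and singularity bookkeeping. First I would show that each patch $Y_{w_P,\mathrm{Pet}}$ is a global complete intersection. Counting generators in (\ref{eqn:theideal}) yields $\sum_{j=1}^{n-2}(n-j-1)=\binom{n-1}{2}$ polynomials inside $\mathbb{C}[\mathbf{z}]$, a polynomial ring on $\binom{n}{2}$ indeterminates. By Krull's height theorem every component of $Y_{w_P,\mathrm{Pet}}$ has dimension at least $\binom{n}{2}-\binom{n-1}{2}=n-1$. On the other hand, Theorem~\ref{thm:main1} identifies $Y_{w_P,\mathrm{Pet}}$ with an open subset of $\mathrm{Pet}_n$, which has dimension $n-1$ (visible as the maximum cell dimension in (\ref{eqn:thedecomposition}), attained uniquely by $\mathcal{J}_{w_0,\mathrm{Pet}}\cong\mathbb{C}^{n-1}$). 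This equality forces the $g_{k,j}$ to form a regular sequence, so $Y_{w_P,\mathrm{Pet}}$ is a complete intersection and consequently Cohen-Macaulay and Gorenstein.

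The main obstacle is upgrading this from each $S$-fixed point patch $\mathcal{N}_{w_P,\mathrm{Pet}}$ to the entirety of $\mathrm{Pet}_n$, since the patches need not cover every $x\in\mathrm{Pet}_n$. I would resolve this using the $S$-action of Lemma~\ref{prop:Petjust}(I). The locus $L\subseteq\mathrm{Pet}_n$ where the lci property holds is open and $S$-invariant (lci is a local geometric condition), and by the previous step contains every $S$-fixed point $w_P B$. If the closed, $S$-invariant complement $\mathrm{Pet}_n\setminus L$ were nonempty, then, being closed in the projective $\mathbb{C}^*$-variety $\mathrm{Pet}_n$, it would itself contain an $S$-fixed point --- contradicting $L\supseteq\{w_P B\}$. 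Thus $L=\mathrm{Pet}_n$, and the lci, hence Cohen-Macaulay and Gorenstein, conclusions are global.

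For the normality dichotomy I would invoke Serre's criterion $R_1+S_2$: since Cohen-Macaulay supplies $S_2$, the question is whether $\mathrm{Sing}(\mathrm{Pet}_n)$ has codimension at least $2$. By Theorem~\ref{thm:singlocus} each singular cell $\mathcal{J}_{w_P,\mathrm{Pet}}$ has codimension $(n-1)-(n-k)=k-1$ in $\mathrm{Pet}_n$. For $n\le 2$ condition (III) is vacuous and $\mathrm{Pet}_n$ is smooth; for $n=3$ the only singular $w_P$ is the identity ($k=3$, codimension $2$); in either case $R_1$ holds. For $n\ge 4$, I would exhibit $w_P=2\,1\,n\,(n{-}1)\cdots 3$, the longest element of $S_2\times S_{n-2}$: its first four entries form a $2143$ pattern, so it lies in $\mathrm{Sing}(\mathrm{Pet}_n)$ by (III), and the cell $\mathcal{J}_{w_P,\mathrm{Pet}}$ has codimension $k-1=1$. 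Hence $R_1$ fails and $\mathrm{Pet}_n$ is not normal for $n\ge 4$.
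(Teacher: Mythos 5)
Your proof is correct, and its core --- playing the ${n-1\choose 2}$ generators of $I_{w_P,\Pet}$ off against the codimension ${n\choose 2}-(n-1)$ forced by $\dim\Pet=n-1$ and Krull's height theorem, so that the $g_{k,j}$ form a regular sequence --- is exactly the paper's argument (it is in fact how the complete-intersection property is established inside the proof of Theorem~\ref{thm:main1} itself). Two steps differ. First, your worry that the fixed-point patches might fail to cover $\Pet$ is unfounded: every $gB\in\Pet$ lies in some cell ${\mathcal J}_{w_P,\Pet}=Bw_PB/B\cap\Pet$ by (\ref{eqn:thedecomposition}), and $Bw_PB/B\subseteq w_PB_{-}B/B$, so $gB\in{\mathcal N}_{w_P,\Pet}$; the paper simply invokes this covering from (P.I). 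Your substitute --- the lci locus is open and $S$-stable, and a nonempty closed $S$-stable subset of the projective variety $\Pet$ must contain an $S$-fixed point (Lemma~\ref{prop:Petjust}(V) even makes the relevant limits explicit) --- is valid and costs little, but it is a detour around a one-line observation. Second, for the normality dichotomy the paper cites Kostant's theorem outright and only sketches how Theorem~\ref{thm:singlocus} recovers it; your Serre-criterion argument ($S_2$ from Cohen--Macaulayness, $R_1$ checked cell by cell, with the codimension-one singular cell of $2\,1\,n\,(n{-}1)\cdots 3$ destroying $R_1$ for $n\geq 4$) makes the statement self-contained, a genuine if modest gain --- and note your witness is the $k=2$ member of the same family $w_P=k\ k{-}1\cdots 1\ n\ n{-}1\cdots k{+}1$ that the paper exhibits.
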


\begin{proof}
The neighbourhoods ${\mathcal N}_{w_P,\Pet}$ cover $\Pet$ (see (P.I) in Section~2).
Since $\dim \Pet=n-1$,
Theorem~\ref{thm:main1} asserts there is an open neighbourhood of any $gB\in \Pet$ isomorphic to an affine variety of codimension ${n-1 \choose 2}$ in ${\mathbb C}^{n \choose 2}$. This codimension equals the number
of generators of $I_{w_P,\Pet}$. Hence $I_{w_P,\Pet}$ is a complete intersection, proving
the first statement. Since
every point is locally Cohen-Macaulay and Gorenstein,
the same is true of ${\rm Pet}_n$;
see, e.g., \cite[Prop~18.13]{Eisenbud} and \cite[Cor~21.19]{Eisenbud}.

The normality statement was proved in \cite[Theorem~14]{kostant}. Now,
${\rm Pet}_2\cong {\mathbb P}^1$ is smooth, whereas one can directly check
that ${\rm Pet}_3$ is normal at its unique singular point $(id)B$.
We recover Kostant's theorem since Theorem~\ref{thm:singlocus} asserts the singular locus is codimension one when $n\geq 4$: for
$w_P=k \ k-1 \ k-2 \ \cdots \ 1 \ n \ n-1 \ \cdots \ k+1 \ \ \ \mbox{\ and \ \ \ $k\geq 2$,}$
all points in ${\mathcal J}_{w_P, \Pet}\cong {\mathbb C}^{n-2}$
are singular.
\end{proof}

\subsection{Organization}
In Section~2, we provide background about patch ideals that we will need.
In Section~3, we make additional comparisons of our work on Peterson varieties with the literature.
In Section~4, we prove Theorem~\ref{thm:main1}, after recalling necessary facts about
Peterson varieties.
In Section~5, we prove Theorem~\ref{thm:singlocus}.
In Section 6, we prove Theorem~\ref{thm:localize} and discuss conjectures about the
tangent cone.
In Section~7, we examine some other $X\subseteq GL_n/B$. We hint that Conjecture~\ref{conj:Petmain}
from Section~6 may be more broad.
We will also state and prove Theorem~\ref{thm:lciextended} that generalizes
Theorem~\ref{thm:main1} and Corollary~\ref{thm:first} to
Peterson-Schubert varieties.

\section{Background, analysis using patch ideals (P.I)--(P.V)}

We present five rudimentary themes we use to study $X\subseteq GL_n/B$. These apply standard facts
about the flag variety \cite{Billey.Lakshmibai, Brion:notes}, equivariant $K$-theory \cite{Chriss.Ginzburg},
and combinatorial commutative algebra
\cite{Eisenbud, Miller.Sturmfels}, summarized for our setting.

\medskip
\noindent
(P.I) \emph{A choice of open neighbourhood of $X$}: Recall the two {\bf Bruhat decompositions}
\begin{equation}
\label{eqn:Bruhat}
GL_n/B=\coprod_{w\in S_n} BwB/B = \coprod_{w\in S_n} B_{-}wB/B.
\end{equation}
The opposite big cell $B_{-}B/B$ provides an affine open neighbourhood of $(id)B$ in $GL_n/B$. Therefore
\[{\mathcal N}_{g,X}=gB_{-}B/B\cap X\]
provides an affine open neighbourhood of $gB$ in $X$. This may be called an $X$ {\bf patch} (we thank A.~Knutson for the terminology).

We desire explicit coordinates and equations
for ${\mathcal N}_{g,X}$. By the first Bruhat decomposition
(\ref{eqn:Bruhat}), $gB=(bw)B$ for some $b\in U$ and $w \in S_n$.
In fact, we may further assume $w^{-1}bw \in U_{-}$ \cite[Theorem 28.4]{H}. So we may
assume $g=bw$ for such $b$ in what follows. Now, consider the projection map
\[\pi:GL_n\to GL_n/B.\]
Let $U_{-}$ be the subgroup of unipotent lower triangular matrices.
Since $\pi$ is a trivial fibration over $B_{-}B/B$ with fiber $B$,
it admits a local section
\[\sigma:B_{-}B/B\to GL_n\]
such that  $ \sigma(B_{-}B/B) = U_{-} \subset GL_n$  \cite[Section~1.2]{Brion:notes}.
Similarly, $\pi$ admits a local section \[ \sigma_g := g \sigma g^{-1}:gB_{-}B/B \to GL_n ,\] which provides a scheme-theoretic isomorphism  $\sigma_g (gB_{-}B/B) = gU_{-} \subset GL_n$, and hence each section $\sigma_g$ identifies explicit coordinates for the patch $\mathcal{N}_{g,GL_n/B} = gB_{-}B/B$.   
Restricting $\pi$ and $\sigma_g$ to $X$ one obtains:
\begin{equation}
\label{eqn:patch}
{\mathcal N}_{g,X}= gB_{-}B/B \cap X \cong \sigma_g(gB_{-}B/B\cap X)\cong \pi^{-1}(X)\cap \sigma_g (gB_{-}B/B),
\end{equation}
scheme-theoretically (the final intersection being transverse).

    A generic matrix $Z^{(w)}$ in $wU_{-}$ has $1$'s in row $w(j)$ and column $j$, and in each row $0$'s to the right of the $1$.
Let us index the unspecialized entry of $Z^{(w)}$ in row $i$ and column $j$ by $z_{ij}$. Hence a generic matrix $Z^{(bw)}$ in $bwU_{-}$
is $bZ^{(w)}$, i.e., a matrix with affine linear forms in the indeterminates ${\bf z}=\{z_{ij}\}$. This explicates the fact $bwU_{-}=wU_{-}$.
We thus identify
\begin{equation}
\label{eqn:polyident}
{\mathbb C}[bwU_{-}]\cong {\mathbb C}[{\bf z}]
\end{equation}
In view of the identifications (\ref{eqn:patch}) and (\ref{eqn:polyident}), we may
interpret ${\mathcal N}_{g,X}$ as subscheme of $\sigma_g (gB_{-}B/B)\cong bwU_{-}=wU_{-}\cong {\mathbb C}^{n\choose 2}$.

It follows that ${\mathcal N}_{bw,X}={\mathcal N}_{w,X}$.
Moreover, since the patches $\{wB_{-}B/B: w\in S_n\}$ cover $GL_n/B$, $\{{\mathcal N}_{w,X}: wB\in X\}$
covers $X\subseteq GL_n/B$. Thus, it is mostly superfluous to define ${\mathcal N}_{g,X}$ for $g\neq w$. However, to conduct some local
analysis near $gB$, we set up
coordinates/equations where the origin corresponds to $gB=(bw)B$:
\begin{definition}
The {\bf patch ideal} is
$I_{g,X}=I({\mathcal N}_{g,X})\subseteq {\mathbb C}[\bfz]$.
\end{definition}
Specifically for $I_{w,X}$,
the action of $T\subseteq GL_n$
on $wB_{-}B/B$ induces an action on $wU_{-}$, namely scale each row of a matrix independently, and then rescale each column so that the entry
in row $w(j)$ and column $j$ remains a $1$. This induces a grading on ${\mathbb C}\left[wU_{-}\right]\cong {\mathbb C}[\bfz]$, which in our coordinates assigns
\begin{equation}
\label{eqn:thegrading}
{\rm deg}(z_{ij})=t_{i}-t_{w(j)}.
\end{equation}
If $S\subseteq T$ is an algebraic subtorus that acts on $X$, the grading
on $I_{w,X}$ coming from $S$ is a coarsening of (\ref{eqn:thegrading}).

\medskip
\noindent
(P.II) \emph{Using generators for the patch ideal:}
We will assume $X$ is reduced.
If $X$ is described as a collection of flags,
it is ``typically easy'' (this is made precise in our examples)
to find equations that define an ideal that set-theoretically cuts out
${\mathcal N}_{g,X}$. One does this by
determining the equations defining $\pi^{-1}(X)\subseteq GL_n$
and demanding that the matrix entries (thought of as coordinates
in ${\mathbb C}[{\bf z}]$)
of $Z^{(bw)}=bZ^{(w)}\in GL_n$ from (P.I) satisfy these equations.
It remains to show these equations generate a radical ideal.
Two typical ways to guarantee this, and that we refer to, are:

(i): If the generators define a complete intersection, the associated
scheme is Cohen-Macaulay. Thus if the scheme is generically reduced, it is reduced
(see \cite[Exercise~18.9]{Eisenbud}). Generic reducedness is
guaranteed if $X$ is irreducible and has a smooth point.

(ii): If the generators form a Gr\"{o}bner basis with squarefree lead terms,
the initial ideal is radical, and so must have been the
original ideal (see \cite[Section~15.8]{Eisenbud}).

In our computational analysis from Sections~6 and 7 we use facts about
free resolutions to verify properties of $X$; see e.g., \cite{Bruns.Herzog, Eisenbud}.
A point $gB$ is  Cohen-Macaulay if the minimal free resolution of the local ring
$\left({\mathbb C}[\bfz]/I_{g,X}\right)_{(z_{ij})}$
is {\bf short}, i.e., it has length equal to the codimension of ${\mathcal N}_{g,X}$
inside ${\mathbb A}^{n\choose 2}$. It is also
    Gorenstein if the resolution is also {\bf symmetric}, i.e., the
betti numbers are symmetric, see e.g., \cite[Chapter~21]{Eisenbud}
and specifically Corollary~21.16 contained therein.
In addition, if $S\subseteq T$ acts on $X$ we can speak of the
$S$-graded finite free resolution:
\begin{equation}
\label{eqn:resolution}
0\rightarrow F_N\rightarrow \cdots\rightarrow F_1\rightarrow F_0\rightarrow{\mathbb C}[{\bf z}]/I_{w,X}\rightarrow 0,
\end{equation}
If this resolution is short (respectively, also symmetric)
then ${\mathcal N}_{w,X}$ is Cohen-Macaulay (respectively Gorenstein).

\medskip
\noindent
(P.III) \emph{A decomposition of $X$:} From (\ref{eqn:Bruhat})
one inherits a decomposition
\[X=\coprod_{w} {\mathcal J}_{w,X} \mbox{ \ where \ } {\mathcal J}_{w,X}=(BwB/B)\cap X.\]
Recall that a function $f:X\to {\mathbb R}$
is {\bf upper-semicontinuous  on $X$} if for any $c\in {\mathbb R}$, the set
$\{p\in X|f(p)\geq c\}$ is closed. In good cases, one can compare points of ${\mathcal J}_{w,X}$.

\begin{lemma}
\label{lemma:goodcases}
Suppose $L$ is a linear algebraic group that acts on $X$.
\begin{itemize}
\item[(I)] If $L$ acts transitively on ${\mathcal J}_{w,X}$, there is a local
isomorphism between any two points in ${\mathcal J}_{w,X}$;
\item[(II)] If $wB\in {\overline{L\cdot gB}}$ for all $gB\in {\mathcal J}_{w,X}$
and $f:X\to {\mathbb R}$ is $L$-invariant and upper-semicontinuous on $X$, then
    this function is maximized on ${\mathcal J}_{w,X}$ at $wB$.
\end{itemize}
\end{lemma}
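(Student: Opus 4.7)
The plan is to prove both parts by essentially unwinding the definitions; neither part requires new machinery beyond the hypotheses, so I do not anticipate a genuine obstacle—just care in bookkeeping.

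For part (I), I would begin by observing that since $L$ is a linear algebraic group acting on $X$, for each $\ell\in L$ the map $\mu_\ell\colon X\to X$ sending $x\mapsto \ell\cdot x$ is an automorphism of $X$ as a variety, with inverse $\mu_{\ell^{-1}}$. Given any two points $p,q\in{\mathcal J}_{w,X}$, transitivity supplies some $\ell\in L$ with $\mu_\ell(p)=q$. Then $\mu_\ell$ restricts to a variety isomorphism between any $L$-stable open neighbourhood of $p$ and its image, giving the required local isomorphism. The only subtlety is to note that $\mu_\ell$ is a morphism of $X$ to itself (not merely of the orbit), which is immediate from the definition of an algebraic group action.

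For part (II), fix $gB\in{\mathcal J}_{w,X}$ and set $c:=f(gB)$. Since $f$ is $L$-invariant, $f\equiv c$ on the orbit $L\cdot gB$. By upper-semicontinuity, the superlevel set $\{p\in X : f(p)\geq c\}$ is closed in $X$; it contains $L\cdot gB$, so it contains $\overline{L\cdot gB}$. By hypothesis $wB\in\overline{L\cdot gB}$, so $f(wB)\geq c=f(gB)$. Since $gB\in{\mathcal J}_{w,X}$ was arbitrary and $wB\in{\mathcal J}_{w,X}$ itself (because $X$ is closed and $L$-stable, forcing $wB\in X\cap BwB/B$), the restriction $f|_{{\mathcal J}_{w,X}}$ attains its maximum at $wB$.

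The only place I would slow down to double-check is whether the hypothesis ``$L$ acts on $X$'' in the statement is to be read as $L$ acting by variety automorphisms of $X$ (so that $\mu_\ell$ is a morphism and $\overline{L\cdot gB}\subseteq X$); given the context (subtori $S\subseteq T\subseteq GL_n$ acting on torus-invariant subvarieties), this is the intended reading, and both parts then follow from the short arguments above.
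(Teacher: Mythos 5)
Your proof is correct and is exactly the argument the paper intends: for (I) the elements of $L$ act as automorphisms of $X$ and transitivity supplies the one carrying $p$ to $q$, and for (II) $L$-invariance plus the closedness of the superlevel set $\{f\geq c\}$ forces $f(wB)\geq f(gB)$. The paper states this in one line; you have simply filled in the routine details.
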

\begin{proof}
For (I), the elements of $L$ provide the isomorphisms. The claim (II) follows from the assumption of $L$-invariance and
the definition of upper-semicontinuity.
\end{proof}

In general, the ${\mathcal J}_{w,X}$ need not be smooth (see, e.g., the case of Richardson varieties in Section~7.2).
The reader may wish to compare \cite{Bialynicki-Birula}.

\medskip
\noindent
(P.IV) \emph{Localization of the equivariant $K$-class of $X$:}
Let $S$ be an $r$-dimensional (sub)torus of $T$ acting on $GL_n/B$.  For any such (sub)torus we have the standard identification
$\Lambda={\rm Hom}(S,{\mathbb C}^{\star}) \cong \mathbb Z^r$.
Let $\chi_1=e^{\lambda_1},\chi_2=e^{\lambda_2},\ldots,\chi_r=e^{\lambda_r}$
be the characters of $S$
corresponding to a choice of basis $\lambda_1,\ldots, \lambda_r$ of $\mathbb Z^r$.

Consider ${\bf M}={\mathbb C}[\bfz]/I_{w,X}$ as a ${\mathcal O}(wB_{-}B/B)\cong {\mathbb C}[{\bf z}]$-module.
When $g=w$, we may suppose that (\ref{eqn:resolution}) is a finitely $S$-graded resolution, where
each free module is of the form
\[F_i\cong {\mathbb C}[\bfz](-{b_{i1}})\oplus\cdots\oplus {\mathbb C}[\bfz](-{b_{ir}}).\]
Then one can match our choice of characters and the grading so that
\[k(\module )=\sum_{i=0}^{N}(-1)^i\chi_1^{b_{i1}}\chi_2^{b_{i2}}\cdots\chi_r^{b_{ir}},\]
is the $K$-polynomial of ${\bf M}$.
This polynomial always exists, and is independent of the choice of
free resolution \cite[Theorem 8.34]{Miller.Sturmfels}.

Let $K_S(GL_n/B)$ be the free abelian group generated by isomorphism classes of $S$-equivariant vector bundles on $GL_n/B$. (The smoothness of $GL_n/B$ implies that there is
a ${\rm Rep}(S)$-module isomorphism that identifies $K_S(GL_n/B)$
with the Grothendieck ($K$-homology) group $K^S(GL_n/B)$ of $S$-equivariant coherent sheaves
on $GL_n/B$.)

Suppose $wB\in (GL_n/B)^T\cap X$. Then $K_S(wB)$ is identified with representation ring of $S$, i.e., the group algebra
\[{\rm Rep}(S)={\mathbb Z}[\Lambda]=\bigoplus_{i=1}^{r}{\mathbb Z}\cdot
\chi_i.\]
$K_S(GL_n/B)$ has the structure of a ${\rm Rep}(S)$-module.
The map
$\rho:\{wB\}\hookrightarrow GL_n/B$
gives rise by pullback to the restriction map
$\rho^\star: K_S(GL_n/B) \rightarrow {\rm Rep}(S)$.

If $X\subseteq GL_n/B$ is an $S$-stable subscheme,
let $[{\mathcal O}_X]$
denote the class of its ($S$-equivariant) structure sheaf. If $wB\in X$,
we use common convention by writing the {\bf localization class}
\[[{\mathcal O}_X]|_{wB}=\rho^{\star}([{\mathcal O}_X])
\in {\rm Rep}(S)\cong K_S(wB).\]

 Then 
\begin{equation}
\label{eqn:Oequalsk}
[{\mathcal O}_X]|_{wB}=k(\module);
\end{equation}
see \cite[Proposition 8.23]{Miller.Sturmfels}.

In situations such as (P.II)(i) and (ii), one can compute $k({\bf M})$ without explicitly knowing the resolution.

Since (\ref{eqn:Bruhat}) is an $S$-invariant CW-decomposition of $GL_n/B$,
it follows that $GL_n/B$ is an equivariantly formal variety; see \cite[Theorem 14.1]{GKM}. This implies that to determine the class of an $S$-invariant $X\subseteq GL_n/B$ in $K_S(GL_n/B)$ it suffices to determine its localization at each $wB \in X\cap (GL_n/B)^T$. See \cite{KnuRos} and references therein.

\medskip
\noindent
(P.V) \emph{The (projectivized) tangent cone:}
Let $({\mathcal O}_{p,X}, {\mathfrak m}_p)$ denote the local ring of $p\in X$.
Then
\[{\rm gr}_{{\mathfrak m}_p}{\mathcal O}_{p,X}=\bigoplus_{i\geq 0}{\mathfrak m}_p^i/{\mathfrak m}_p^{i+1}\]
is the {\bf associated graded ring}.
The {\bf projectivized tangent cone} is ${\rm Proj}({\rm gr}_{{\mathfrak m}_p}{\mathcal O}_{p,X})$.
The {\bf Hilbert-Samuel multiplicity} of $p\in X$ is defined to be
${\rm mult}_{p}(X)={\rm deg}({\rm Proj}({\rm gr}_{{\mathfrak m}_p}{\mathcal O}_{p,X}))$.
This gives useful data about the singular structure of $p$. However,
more refined data is available from the {\bf $h$-polynomial}
$h_{p, X}(\uu)$, which is the numerator of the Hilbert series:
\[{\rm Hilb}({\rm gr}_{{\mathfrak m}_p}{\mathcal O}_{p,X},\uu)
=\frac{h_{p,X}(\uu)}{(1-\uu)^{\dim(X)}}.\]

One can compute these invariants of $X$ for $p=gB$ from the generators for $I_{g,X}$:
 Homogenize the generators $\{g_{k}\}$ by $t$. Now pick any term order $\prec$ that favors terms with largest degrees of $t$ first.
  Compute
a Gr\"{o}bner basis with respect to $\prec$ and set $t=1$ in each generator. The lowest degree
forms of the resulting polynomials define the ideal $I_{g,X}'\subset {\mathbb C}[\bfz]$, where
$I_{g,X}'$ (scheme-theoretically) cuts out ${\rm Spec}({\rm gr}_{{\mathfrak m}_p}{\mathcal O}_{p,X})$. See \cite[Proposition~15.28]{Eisenbud}.

Thus, $h_{gB,X}(\uu)$ is the numerator of the Hilbert series of
${\mathbb C}[\bfz]/I'_{g,X}$ (when expressed with denominator $(1-\uu)^{{\rm dim}(X)}$).
It is also true that the Hilbert-Samuel multiplicity is given by $h_{gB,X}(1)$; see, e.g.,
\cite[Theorem~5.4.15]{KreRob}.
Since $I_{g,X}'$ has the standard grading, one can check if it is Cohen-Macaulay
or Gorenstein by computing a minimal free resolution for ${\mathbb C}[{\bf z}]/I_{g,X}'$
and checking if it is short (respectively, also symmetric); cf. (P.II).

\smallskip
\begin{example}
\label{exa:petex}
Let $X={\rm Pet_n}$. The ideal $I_{w_P,\Pet}$ in Theorem~\ref{thm:main1} is
the patch ideal from (P.I). The
``typically easy'' determination of the equations defining $I_{w_P,\Pet}$ up to radical (P.II) is the content of the computations
(\ref{eqn:bigone}), (\ref{gformula}) and (\ref{eqn:alphaformula}) that pull back the definition of the Peterson variety along
$\pi:GL_n\to GL_n/B$. We will use (P.II)(i) to prove they actually define $I_{w_P,\Pet}$ as a radical ideal,
and hence $Y_{w_P,\Pet}\cong {\mathcal N}_{w_P,\Pet}$ (as schemes). The decomposition (\ref{eqn:thedecomposition})
agrees with (P.III); it just happens here that ${\mathcal J}_{w,\Pet}$
is a group orbit.

If $gB\in {\rm Pet}_n$ is any point then by (\ref{eqn:thedecomposition}) and (\ref{eqn:Jaffine}), we may suppose
$gB\in {\mathcal J}_{w_P,\Pet}$ where $g=bw_P$ for $b\in U_P$.
Re-centering so the origin is at $gB$ corresponds to the coordinate change
${z}_{i,j}\mapsto z_{i,j}+ \sum_{i<k} b_{i,k} \cdot z_{k, j}$,
where $b=(b_{ij})\in U_P$. Therefore, we may take
$I_{g,\Pet}\subseteq {\mathbb C}[{\bf z}]$ to be
generated by $\{g_{k,j}\}$ (after this affine linear substitution into each $g_{k,j}$).
Thus, Theorem~\ref{thm:main1} trivially extends to $I_{g,\Pet}$.

Theorem~\ref{thm:localize} is proved
via (P.IV). We develop Conjecture~\ref{conj:Petmain} using (P.V).\qed
\end{example}

\begin{example}
\label{exa:schub}
The use of patches to study Schubert varieties of $GL_n/B$
appears throughout the literature, see, e.g., \cite{KL, Knutson:patches} and the references therein. It being so ubiquitous an idea, we do not know a primary source. However, even for Schubert varieties, explicit study of the patch \emph{ideal} seems relatively recent; it appears at least
implicitly in \cite{Fulton} with more work in \cite{Knutson.Miller}.

Closely related is the use of the Kazhdan-Lusztig ideal, which is mostly the same as the patch ideal (P.I); thus
the themes of Section~2 apply. An explicit
Gr\"{o}bner basis was given in \cite{WY:KL}
(after the Gr\"{o}bner degeneration of \cite{Knutson:patches}, see also \cite{Knutson:frob}); cf. (P.II)(ii).
The decomposition of (P.III) is the Bruhat decomposition.
Combinatorial formulas for their localizations
(i.e., specializations of \emph{Grothendieck polynomials}) are geometrically explained via the point of view (P.IV).
Combinatorial analysis of the projectivized tangent cone in \cite{Li.Yong, Li.Yong2} illustrates (P.V). Also, see Section~7.2.
\qed
\end{example}

\begin{remark}
Suppose $Y\subseteq {\mathbb P}^n$ is any projective variety, then
let $\rho:GL_n/B\to {\mathbb P}^n$
be the map that projects
to the first linear subspace of a flag. This is a smooth morphism,
and in fact a locally trivial fibration. Thus, local
features of $Y$ can, \emph{in principle}, be  studied by considering
$X=\rho^{-1}(Y)\subseteq
GL_n/B$. \qed
\end{remark}

\begin{remark}
If $X\subseteq G/P$, then take $X\cap gB_{-}P/P$ to be an $X$ patch.
See work of A.~Fink-D.~Speyer \cite{Fink.Speyer} where the equivariant $K$-localizations of ${\overline{Tx}}$ in the Grassmannian
are connected to matroid invariants.

Separately, see thesis work of M.~Snider \cite{Snider} where the patch ideal of the \emph{positroid variety}
in the Grassmannian is computed and the scheme is related to certain Kazhdan-Lustzig varieties (essentially also patch ideals) of
the affine Grassmannian. See earlier, related work
of A.~Knutson-T.~Lam-D.~Speyer \cite{KLS}.

At least for the classical groups $G$, it is not difficult to extend the constructions (P.I)--(P.V) to attempt to understand
$X\subseteq G/B$. Briefly, one imposes additional quadric equations that account for the relevant bilinear form.
\qed
\end{remark}

\section{Further comparisons with earlier work on Peterson varieties}
D.~Peterson introduced and studied the strata
${\mathcal Y}_{w_P}={\rm Pet}_n\cap B_{-}w_{P^c} B/B$,
where $P^c$ is the parabolic subgroup for the nodes of the Dynkin diagram complementary to those
defining $P$. (One has ${\rm Pet}_n=   \coprod_{P} {\mathcal Y}_{w_P}$.)
Although these
strata are reducible (for examples of this in the Grassmannian case, see
\cite{Konnie:duke}), he announced that they are in fact reduced. He
furthermore gave a remarkable connection between ${\mathcal Y}_{w_P}$ and quantum cohomology of the partial
flag varieties $GL_n/P$. That is,
\begin{equation}
\label{eqn:iso}
{\mathbb C}[{\mathcal Y}_{w_P}]\cong QH^{\star}(GL_n/P);
\end{equation}
see \cite{Peterson}. B.~Kostant
\cite{kostant} had proved the case ${\mathcal Y}_{w_B}$ of this isomorphism.

J.~Tymoczko \cite{Tym07} gave a paving by affine spaces for regular nilpotent Hessenberg varieties. The cells in the case of $\Pet$ are
${\mathcal J}_{w_P,\Pet}={\rm Pet}_n\cap Bw_PB/B$;
see (\ref{eqn:Jaffine}) and Lemma~\ref{prop:Petjust}(IV).

In general ${\mathcal N}_{w_P,\Pet}$, ${\mathcal Y}_{w_P}$, and ${\mathcal J}_{w_P, \Pet}$ differ. In particular, the latter two do not give affine neighbourhoods of the $T$-fixed points of
${\rm Pet}_n$. However, ${\mathcal N}_{id, \Pet}={\mathcal Y}_{w_B}$.

K.~Rietsch \cite{reitsch}
proved Peterson's theorem (\ref{eqn:iso}) in type $A$, using
results of S.~Fomin-S.~Gelfand-A.~Postnikov \cite{FGP} and I.~Ciocan-Fontanine \cite{Ciocan}.
With a variation on the coordinates and equations we use for ${\mathcal N}_{g,\Pet}$
one can study ${\mathcal Y}_{w_P}$ for the classical
groups $G$, in connection to $QH^{\star}(G/P)$.
Here is a small sample:

\begin{example}
When $n=3$, the quantum cohomology ring is \[ QH^{\star}(GL_3/B)\cong \frac{{\mathbb C}[x_1,x_2,x_3,q_1,q_2]}{ \langle E_1^{(3)}, E_2^{(3)}, E_3^{(3)}\rangle} \] where
$E_1^{(3)} = x_1 +x_2+x_3, E_2^{(3)} = x_1x_2+x_1x_3+x_2x_3+q_1+q_2$ and $E_3^{(3)}= x_1x_2x_3+x_1q_2+x_3q_1$
are the \emph{quantum elementary symmetric polynomials}. This presentation
is an instance of a result of A.~Givental--B.~Kim and of I.~Ciocan-Fontanine; see, e.g., \cite[Equation~(1.2)]{FGP} and the associated references.
Setting
$x_3=-(x_1+x_2), \mbox{\ and $q_2 = -(q_1+x_1x_2+x_2x_3+x_1x_3)$}$,
we eliminate two variables to obtain
\[ QH^{\star}(GL_3/B) \cong \frac{{\mathbb C}[x_1,x_2,q_1]}{\langle x_1^3-2x_1q_1-x_2q_1 \rangle}.\]
Consider the surjection
$\psi: {\mathbb C}[Z^{(id)}]\cong {\mathbb C}[z_{21},z_{31},z_{32}] \rightarrow QH^{\star}(GL_3/B)$
defined by
\[z_{21} \mapsto  -( x_1+x_2 ), \ \ \  z_{31} \mapsto x_1x_2+q_1, \ \ \   z_{32} \mapsto  - x_1. \]

Observe
\begin{align*}  x_1^3-2x_1q_1-x_2q_1 & =  (-x_1-x_2)(x_1x_2+q_1)+ [(x_1x_2+q_1)-(-x_1-x_2)^2](-x_1) &\\
& = z_{21}z_{31} + (z_{31}-z_{21}^2)z_{32}. \end{align*}
By Theorem~\ref{thm:main1}, the
latter polynomial cuts out $Y_{123,{\rm Pet}_3}\cong {\mathcal N}_{123,{\rm Pet}_3}={\mathcal Y}_{w_B}$. It therefore follows that
$\psi$ induces ${\mathbb C}[{\mathcal Y}_{w_B}]\cong QH^{\star}(GL_3/B)$. This agrees with (\ref{eqn:iso}); cf.~\cite{reitsch}.\qed
\end{example}

\section{Proof of Theorem~\ref{thm:main1}}

\subsection{Preliminaries}

Let $U'\subset U$ be the group $U_P$ where $W_P=S_n$; see (\ref{eqn:BPdef}).

\begin{example} \label{example:BP}
In $GL_6$, $U'$ and $U_P$ for $W_P=S_3\times S_3$, respectively are:
\[U'= \left\{\begin{pmatrix}  1& q_1 & q_2  &q_3 & q_4& q_5\\ 0&1 & q_1 &q_2 & q_3& q_4\\ 0& 0& 1&q_1 &q_2 &q_3
\\0 &0 &0  & 1&q_1 &q_2 \\0&0 &0  &0 & 1&q_1 \\ 0&0&0&0&0& 1 \end{pmatrix}\right\} \hspace{1cm} U_P=\left\{
\begin{pmatrix}  1& q_1 & q_2  &0 & 0& 0\\ 0&1 & q_1 &0 & 0& 0\\ 0& 0& 1 &0 &0 &0 \\0 &0 &0  & 1&r_1 &r_2 \\0&0 &0  &0 & 1&r_1 \\ 0&0&0&0&0& 1 \end{pmatrix}\right\},\]
where $q_i, r_i\in {\mathbb C}$ for all $i$. \qed
\end{example}

Recall the one-dimensional torus
\[ S = \left \{ s(t)=\begin{pmatrix}  t &  & & \\  & t^2 & & \\ & &  \ddots & \\ & & & t^n \end{pmatrix} : t \in {\mathbb C}^{\star} \right \} \subset T.\]

We will need the following (known) facts.
 \begin{lemma}
 \label{prop:Petjust}
 \begin{itemize}
 \item[(I)] $S$ acts on ${\rm Pet}_n\subseteq GL_n/B$ by left multiplication.
 \item[(II)] Similarly $U'$ acts on ${\rm Pet}_n$, and ${\rm Pet}_n = \overline{U' \cdot w_0B}.$
     \item[(III)] ${\rm Pet}_n$ is irreducible.
   \item[(IV)] The two definitions of ${\mathcal J}_{w_P,\Pet}$ (cf. Section~1.2 and (P.I)) agree:
   \[{\mathcal J}_{w_P,\Pet} = U_P \cdot w_P B = \Pet\cap (Bw_P B/B).\]
     \item[(V)] If $gB\in {\mathcal J}_{w_P,\Pet}$, then
     $\lim_{t\to \infty} s(t)\cdot gB=w_P B$.
    \item[(VI)] \label{lemma:whatarefixedpoints}
$wB \in {\rm Pet}_n$ if and only if $w=w_P$, where $w_P$ is the maximal Coxeter length element of $W_P=S_{i_1}\times S_{i_2}\times
\cdots \times S_{i_k}$. Explicitly,
\[w_P=i_1 \ i_1 -1 \cdots 3 \ 2 \ 1 \ i_1+i_2 \ i_1 +i_2 -1 \ \cdots i_1+1 \ \cdots \ \cdots \ n \ n-1 \ \cdots i_1+\cdots +i_{k-1}+1.\]
 \end{itemize}
 \end{lemma}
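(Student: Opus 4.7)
The plan is to prove parts (I)--(VI) in turn. Parts (I)--(III) and (V) reduce to group-action preservation and short (dimension, limit) calculations; (VI) is combinatorial. The main obstacle is (IV): as noted in the discussion preceding Theorem~\ref{thm:singlocus}, $U_P$ does not in general act on all of $\Pet$, so one really does need to check $U_P \cdot w_P B \subseteq \Pet$ by hand, and then match dimensions with the Tymoczko paving \cite{Tym07} to upgrade the inclusion to the asserted equality.

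For (I), a direct matrix calculation gives $s(t)^{-1} N s(t) = t N$, so the Peterson condition $N F_i \subseteq F_{i+1}$ is preserved. For (II), $U'$ is the unipotent part of the centralizer $C_{GL_n}(N) = {\mathbb C}[N]^{\times}$, so $u^{-1} N u = N$ for $u \in U'$ and the Peterson condition is preserved; a direct check gives $w_0 B \in \Pet$ (using $N e_k = e_{k-1}$), and the stabilizer $U' \cap w_0 B w_0^{-1} \subseteq U \cap B_{-} = \{I\}$ is trivial, so $U' \cdot w_0 B$ has dimension $n-1$. Since \cite{Tym07} gives $\dim \Pet = n-1$ and ${\mathcal J}_{w_0, \Pet}$ is the unique top-dimensional Tymoczko cell, taking closures yields $\Pet = \overline{U' \cdot w_0 B}$. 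Then (III) is immediate, as the closure of the image of an irreducible variety is irreducible.

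For (IV), the containment $U_P \cdot w_P B \subseteq B w_P B / B$ is trivial since $U_P \subseteq U \subseteq B$. To check $U_P \cdot w_P B \subseteq \Pet$, fix $u \in U_P$ and verify that $u w_P F_\bullet^{\mathrm{std}}$ is a Peterson flag. The key observation is that each diagonal block of $u$ commutes with the corresponding block of $N$ (so the block-diagonal part of $N$ contributes the next column within the block, using that $w_P$ reverses each block), while the single cross-block entry of $N$ between blocks $m-1$ and $m$ puts any new contribution into a row of block $m-1$, which is already completely contained in the flag. For the reverse inclusion, the $U_P$-stabilizer of $w_P B$, namely $U_P \cap w_P B w_P^{-1}$, is trivial: the nonzero off-diagonal entries of $U_P$ sit strictly above the diagonal inside blocks that $w_P$ reverses, so become strictly below the diagonal after conjugation by $w_P^{-1}$, forbidding any nontrivial element of $U_P$ from lying in $w_P B w_P^{-1}$. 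Hence $\dim(U_P \cdot w_P B) = \dim U_P = n - k$, matching the dimension of the Tymoczko cell $\Pet \cap B w_P B / B$ from \cite{Tym07}; containment plus dimension agreement forces equality.

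Finally, for (V) write $g = u w_P$ with $u \in U_P$ using (IV); then $s(t) \cdot w_P B = w_P B$ (since $w_P^{-1} s(t) w_P \in T \subseteq B$), so $s(t) \cdot gB = (s(t) u s(t)^{-1}) w_P B$, and the $(i,j)$-entry of $s(t) u s(t)^{-1}$ with $i < j$ scales by $t^{i-j} \to 0$ as $t \to \infty$, giving limit $w_P B$. For (VI), the condition $N F_i \subseteq F_{i+1}$ for the permutation flag $F_\bullet = w F_\bullet^{\mathrm{std}}$ translates to: for each $k > 1$, $w^{-1}(k-1) \leq w^{-1}(k) + 1$. A direct combinatorial reading---breaking $w$ into maximal decreasing runs of consecutive integers by noting the positions where $w(j) - 1 \neq w(j+1)$---identifies the admissible $w$ as exactly the block-reversing permutations $w_P$ listed in the statement.
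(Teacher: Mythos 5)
The one genuine gap is in your treatment of (II) and (III). You deduce $\Pet=\overline{U'\cdot w_0B}$ from the statement that ${\mathcal J}_{w_0,\Pet}$ is the unique top-dimensional cell in Tymoczko's paving, and then you derive irreducibility (III) as a corollary. That implication is false in general: a variety paved by affines with a unique top-dimensional cell can still have irreducible components of strictly smaller dimension that are not contained in the closure of the big cell (e.g.\ a plane and a line meeting in a point, paved by $\mathbb{A}^2$, two copies of $\mathbb{A}^1$ and a point). A paving controls Betti numbers but carries no information about which cells lie in the closure of which others, so "taking closures" does not yield the asserted equality. The paper runs the logic in the opposite order: it first cites irreducibility of $\Pet$ from Kostant \cite[Theorem~6]{kostant} --- which is a genuine theorem, not a formal consequence of the cell decomposition --- and only then uses $\dim\Pet=n-1=\dim(U'\cdot w_0B)$ to conclude that the closure of the orbit is all of $\Pet$. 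To repair your argument you must either import Kostant's irreducibility result (or equidimensionality of $\Pet$) as an external input, or prove directly that every cell ${\mathcal J}_{w_P,\Pet}$ lies in $\overline{U'\cdot w_0B}$; as written, (II) and (III) are unproven.

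The remaining parts are essentially sound. Your (I), (V), (VI) match the paper's arguments. For (IV) you take a different route: the paper identifies $\Pet\cap Bw_PB/B$ explicitly as the set of block matrices with constant super-antidiagonals, which \emph{is} $U_Pw_PB$, whereas you verify the inclusion $U_P\cdot w_PB\subseteq\Pet$ by hand (your block-by-block check of the action of $N$ is correct) and then match dimensions against Tymoczko's cell. Note that "containment plus dimension agreement forces equality" needs one more word: the orbit is an irreducible locally closed subset of full dimension in the irreducible affine cell, hence dense, and equality then follows because orbits of unipotent groups on affine varieties are closed (Kostant--Rosenlicht). With that supplement, and with the fix to (II)/(III) above, the proof goes through.
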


\begin{example}
Let $n=8$ and $W_P=S_3\times S_2\times S_3$. Then the permutation
matrix for $w_P$ and the generic matrix $Z^{(w_P)}$ in $w_PU_{-}$ are given by
\[w_P=\left(\begin{matrix}
0 & 0 & 1 & 0 & 0 & 0 & 0 & 0 \\
0 & 1 & 0 & 0 & 0 & 0 & 0 & 0 \\
1 & 0 & 0 & 0 & 0 & 0 & 0 & 0 \\
0 & 0 & 0 & 0 & 1 & 0 & 0 & 0 \\
0 & 0 & 0 & 1 & 0 & 0 & 0 & 0 \\
0 & 0 & 0 & 0 & 0 & 0 & 0 & 1 \\
0 & 0 & 0 & 0 & 0 & 0 & 1 & 0 \\
0 & 0 & 0 & 0 & 0 & 1 & 0 & 0
\end{matrix}\right) \mbox{ \  \ and \ \ }
Z^{(w_P)}=\left(\begin{matrix}
z_{11} & z_{12} & 1 & 0 & 0 & 0 & 0 & 0 \\
z_{21} & 1 & 0 & 0 & 0 & 0 & 0 & 0 \\
1 & 0 & 0 & 0 & 0 & 0 & 0 & 0 \\
z_{41} & z_{42} & z_{43} & z_{44} & 1 & 0 & 0 & 0 \\
z_{51} & z_{52} & z_{53} & 1 & 0 & 0 & 0 & 0 \\
z_{61} & z_{62} & z_{63} & z_{64} & z_{65} & z_{66} & z_{67} & 1 \\
z_{71} & z_{72} & z_{73} & z_{74} & z_{75} & z_{76} & 1 & 0 \\
z_{81} & z_{82} & z_{83} & z_{84} & z_{85} & 1 & 0 & 0
\end{matrix}\right). \qed\]
\end{example}

\noindent
\emph{Proof of Lemma~\ref{prop:Petjust}:}
For (I), the claim follows from the definition of ${\rm Pet}_n$ since
$(s(t))^{-1} N s(t)=tN$ for $s(t) \in S$.
Similarly, the first sentence of (II) holds since $U'$ centralizes $N$.

(III) was shown in \cite[Theorem~6]{kostant}. One knows
$\dim \Pet =n-1$. Clearly, ${\rm dim}(U' \cdot w_0B)=n-1$, so the (irreducible) closure of this group orbit is all of $\Pet$; this is~(II).

For (IV), flags in $Bw_{P}B$ may be identified with the block matrices
$\matrx =\matrx^{(1)}\oplus \matrx^{(2)}\oplus\cdots\oplus \matrx^{(k)} \in GL_n$
where $\matrx^{(j)}$ is the matrix with $1$'s on the antidiagonal, $0$'s below, and free entries above.
The condition that the given flag is in $\Pet$ implies that there are equal entries on each super-antidiagonal of
each $K^{(i)}$. The set of all such $\matrx$ is equal to $U_P w_{P}B$.

(V) holds by the second equality of (IV) and the fact that the claim is
true of any point of $Bw_P B/B$, by elementary considerations.

Lastly for (VI), by definition,
$F_{\bullet}=wB \in {\rm Pet}_n$ if and only if $N\cdot F_i \subseteq F_{i+1}$
for $1\leq i\leq n-2$.
The latter happens if and only if
$N\cdot {\vec e_{w(i)}}={\vec e_{w(i)-1}}\in F_{i+1}$
whenever $w(i)\neq 1$ (when $w(i)=1$,
$N{\vec e_{w(i)}}={\vec 0}\in F_{i+1}$ is automatic). Here ${\vec e_j}$ is the $j$-th standard basis vector.
That $w=w_{P}$ for some $P$ then follows
from this, by inducting on the position of the ``$1$'' in the permutation
matrix of $w$, from left to right.\qed

\subsection{Proof of Theorem~\ref{thm:main1}:} We refer the reader to the discussion of (P.I), (P.II) and Example~\ref{exa:petex}
in Section~2:
Let $gB\in {\mathcal N}_{w_P,\Pet}$. Since
${\mathcal N}_{w_P,\Pet}\cong \pi^{-1}(\Pet)\cap \sigma_{w_P}(w_P B_{-}B/B)\cong
\pi^{-1}(\Pet)\cap  w_P U_{-}$ we may assume $g$ is the unique element of $w_PU_{-}$ sent to
$gB$ under $\pi:GL_n\to GL_n/B$. Since $gB\in \Pet$, it must therefore be true that the
$j$-th column of $g$ is contained in the span of the first $j+1$ columns. Therefore, by construction, the
entries of $g$ (considered as coordinates in $\CC^{n \choose 2}$) must satisfy the defining equations of $I_{w_P,\Pet}$.
Hence $Y_{w_P,\Pet}\supseteq {\mathcal N}_{w_P,\Pet}$. The other containment is the same argument
run in reverse. Thus $Y_{w_P,\Pet}={\mathcal N}_{w_P,\Pet}$ (set-theoretically), where the latter is viewed as a subvariety of ${\mathbb C}^{n\choose 2}$.

The dimension of ${\rm Pet}_n$ is $n-1$; the same is true of ${\rm Pet}_n\cap w_P\cdot B_{-}B/B$. Thus the codimension of ${\mathcal N}_{w_P,\Pet}$ and $Y_{w_P,\Pet}$ in ${\mathbb C}^{n\choose 2}$ (as identified with
$\sigma_{w_P}(w_P\cdot B_{-}B/B)$) is ${n-1\choose 2}$. This is the number of generators of $I_{w_P, \Pet}$.
Hence $Y_{w_P,\Pet}$ is a complete intersection (and Cohen-Macaulay) \cite[Prop~18.13]{Eisenbud}.

We now show that $Y_{w_P,\Pet}$ is reduced; cf. (P.II).
Since ${\rm Pet}_n$ is irreducible, ${\mathcal N}_{w_P,\Pet}=Y_{w_P,\Pet}$ (set-theoretic equality) is irreducible. Thus,
by (P.II)(i),
it suffices to exhibit a smooth point,
which we do with the Jacobian criterion.
To achieve this, we will use the following fact, which is immediate by an induction using (\ref{eqn:alphaformula}):

\begin{lemma}
\label{lemma:alphaleft}
Fix $1\leq s\leq n-2$ and let $1\leq q\leq s+1$. Then $\alpha_{s,q}$
is a polynomial in $\{z_{t,r}: r\leq s\}$.
\end{lemma}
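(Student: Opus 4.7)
The plan is to induct on $q$ for fixed $s$, using the explicit recursion (\ref{eqn:alphaformula}), namely
\[\alpha_{s,q} = z_{w_P(q)+1,\,s} - \sum_{t=1}^{q-1} \alpha_{s,t}\, z_{w_P(q),\,t}.\]
The column indices of the raw $z$-variables on the right-hand side are either $s$ (from the leading term) or some $t$ with $t \leq q-1$ (from the sum). Since the hypothesis $q \leq s+1$ gives $q-1 \leq s$, every explicit $z$-variable appearing in the recursion already has column index at most $s$.

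For the base case $q=1$, the sum is empty and $\alpha_{s,1} = z_{w_P(1)+1,\,s}$, a single variable with column index $s$. For the inductive step, by the inductive hypothesis each $\alpha_{s,t}$ with $t < q$ is a polynomial in $\{z_{u,r}: r \leq s\}$, and it is multiplied in (\ref{eqn:alphaformula}) by a factor $z_{w_P(q),t}$ whose column index $t$ is likewise $\leq s$. Subtracting such a sum from the leading term $z_{w_P(q)+1,\,s}$ preserves the property, so $\alpha_{s,q}$ is itself a polynomial in $\{z_{u,r}: r \leq s\}$, completing the induction.

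The argument is essentially a bookkeeping exercise once one unfolds (\ref{eqn:alphaformula}); the only substantive check is that the hypothesis $q \leq s+1$ is precisely what is needed to force $t \leq q-1 \leq s$ in every summand. There is no real obstacle to overcome --- the lemma is an isolated statement about column-index propagation in the recursion, presumably invoked in the sequel to help locate a smooth point of $Y_{w_P,\Pet}$ via the Jacobian criterion, as foreshadowed in the preceding paragraph.
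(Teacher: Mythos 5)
Your proof is correct and is exactly the induction the paper has in mind: the paper simply declares the lemma ``immediate by an induction using (\ref{eqn:alphaformula})'', and your argument supplies precisely that induction on $q$, checking that the explicit variables in the recursion have column index $s$ or $t\leq q-1\leq s$. Nothing further is needed.
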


Consider
the summand $\alpha_{j,j+1}z_{k,j+1}$ of the generator $g_{k,j}$ of $I_{w_P,\Pet}$; this is the unique summand in $g_{k,j}$
involving $z_{k,j+1}$ by Lemma~\ref{lemma:alphaleft}.
The assumption that $g_{k,j}$ is a generator implies $z_{k,j+1}$ is unspecialized. Lemma~\ref{lemma:alphaleft} additionally implies that
$z_{k,j+1}$ does not appear in ${\alpha}_{j,j+1}$. Thus,
\[\frac{\partial g_{k,j}}{\partial z_{k,j+1}}=\frac{\partial(\alpha_{j,j+1}z_{k,j+1})}{\partial z_{k,j+1}}=\alpha_{j,j+1}({\bf z}).\]
Similarly, the variable $z_{k,j+1}$ does not appear in any
$g_{k',j}$ for $k'\neq k$, or $g_{k',j'}$ for  $j'<j$.

\begin{claim}
There exists a point
\begin{equation}
\label{eqn:supposepoint}
p \in Y_{w_P,\Pet} \mbox{\ such that $\alpha_{j,j+1}( p )\neq 0$ for all $1\leq j\leq n-2$.}
\end{equation}
\end{claim}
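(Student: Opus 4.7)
The plan is to exhibit $p$ as any point of the intersection $U' \cdot w_0 B \cap {\mathcal N}_{w_P, \Pet}$, and then interpret the condition $\alpha_{j,j+1}(p) \neq 0$ as a basis-free statement about the flag $F_\bullet(p)$. No iterative coordinate-by-coordinate construction is needed; the whole claim reduces to a single linear independence check, once the geometric content of $\alpha_{j,j+1}$ is identified.

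First I verify the intersection is nonempty. By Lemma~\ref{prop:Petjust}(II), the orbit $U' \cdot w_0 B$ has dimension $\dim U' = n-1 = \dim \Pet$, so it is a dense open subset of the irreducible variety $\Pet$ (Lemma~\ref{prop:Petjust}(III)). The patch ${\mathcal N}_{w_P, \Pet}$ is also a nonempty open subset of $\Pet$ (it contains $w_P B$). Since $\Pet$ is irreducible, any two nonempty open subsets meet, so I may choose $p = u w_0 B$ for some $u \in U'$ with $p \in {\mathcal N}_{w_P,\Pet} = Y_{w_P,\Pet}$.

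Next I reinterpret $\alpha_{j,j+1}(p) \neq 0$. Reducing the defining relation $N \cdot Z_j = \alpha_{j,1}Z_1 + \cdots + \alpha_{j,j+1}Z_{j+1}$ modulo $F_j(p) := \mathrm{span}(Z_1(p),\ldots,Z_j(p))$ yields $N Z_j(p) \equiv \alpha_{j,j+1}(p)\, Z_{j+1}(p) \pmod{F_j(p)}$. Since $\{Z_1(p),\ldots,Z_{j+1}(p)\}$ is linearly independent (columns of an invertible matrix), $Z_{j+1}(p) \notin F_j(p)$, so
$$\alpha_{j,j+1}(p) \neq 0 \ \iff\ N \cdot F_j(p) \not\subseteq F_j(p).$$

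Finally I verify the non-containment for $p = u w_0 B$. Since $U'$ centralizes $N$ (see the proof of Lemma~\ref{prop:Petjust}(II)), the columns $v_i := u e_{n-i+1}$ of $uw_0$ satisfy $N v_i = u N e_{n-i+1} = u e_{n-i} = v_{i+1}$. Therefore $F_j(p) = \mathrm{span}(v_1,\ldots,v_j)$ and $N \cdot F_j(p) = \mathrm{span}(v_2,\ldots,v_{j+1}) \ni v_{j+1}$, while $v_{j+1} \notin F_j(p)$ by linear independence of the basis $\{v_1,\ldots,v_n\}$ of ${\mathbb C}^n$. Hence $\alpha_{j,j+1}(p) \neq 0$ for every $1 \leq j \leq n-2$. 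The only conceptual step is the geometric reformulation of $\alpha_{j,j+1}$ in terms of $N$-invariance of $F_j$; once this is available, the verification reduces to the triviality that $N$ carries each $v_i$ to the next basis vector $v_{i+1}$.
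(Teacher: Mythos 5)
Your proof is correct, but it takes a genuinely different route from the paper's. The paper argues by contradiction via a dimension count: it introduces the auxiliary regular nilpotent Hessenberg varieties $H_j=\{F_{\bullet}: N\cdot F_i\subset F_{i+1} \text{ for } i\neq j,\ N\cdot F_j\subseteq F_j\}$, observes that ${\mathcal N}_{w_P,H_j}=Z(\alpha_{j,j+1})\cap Y_{w_P,\Pet}$ set-theoretically, and invokes the Sommers--Tymoczko dimension formula $\dim H_j=n-2$ to conclude that $\prod_j\alpha_{j,j+1}$ cannot vanish identically on the $(n-1)$-dimensional $Y_{w_P,\Pet}$. You instead produce the point explicitly: any point of the dense orbit $U'\cdot w_0B$ meeting the patch works, because (i) at a point of $Y_{w_P,\Pet}$ the full system (\ref{eqn:bigone}) holds, so $\alpha_{j,j+1}(p)\neq 0$ is equivalent to the basis-free condition $N\cdot F_j(p)\not\subseteq F_j(p)$, and (ii) for $p=uw_0B$ with $u$ centralizing $N$, the columns $v_i=ue_{n+1-i}$ satisfy $Nv_i=v_{i+1}$, so no $F_j(p)$ is $N$-stable. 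Both ingredients you use (the orbit description of Lemma~\ref{prop:Petjust}(II)--(III) and the set-theoretic identification $Y_{w_P,\Pet}={\mathcal N}_{w_P,\Pet}$) are already available at this point in the paper's argument, and your reformulation of $\alpha_{j,j+1}(p)\neq 0$ is legitimately independent of the choice of coset representative since it only refers to the flag. What your approach buys is self-containedness (no appeal to the external dimension formula for $H_j$) and explicitness (a concrete open dense set of witnesses); what the paper's approach buys is robustness, since the hypersurface-avoidance argument adapts to settings where a dense unipotent orbit with such a clean $N$-cyclic structure may not be available.
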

\begin{proof}
Consider
$H_j=\{F_{\bullet}\in GL_n/B: N\cdot F_i\subset F_{i+1} \mbox{\ if $i\neq j$ and \ } N\cdot F_j\subseteq F_{j}\}$.
In fact $H_j$
is a regular nilpotent Hessenberg variety, see Section~7.1. It follows from a dimension formula of E.~Sommers-J.~Tymoczko \cite[Theorem~10.2]{SomTym06}, which is explicitly stated in \cite[Section~9.1]{Tym06a},  that
$\dim H_j=n-2$. Notice that
${\mathcal N}_{w_P,H_j}=Z(\alpha_{j,j+1})\cap Y_{w_P,{\rm Pet}_n}$ (set-theoretically).

Suppose
$\Gamma({\bf z})=\prod_{1\leq j\leq n-2}\alpha_{j,j+1}({\bf z})$
vanishes identically on $Y_{w_P,\Pet}$. Then this means (set-theoretically) that
$Y_{w_P,\Pet}\subseteq \bigcup_{j=1}^{n-2} {\mathcal N}_{w_P,H_j}$.
Hence $Y_{w_P,\Pet}$ is at most $n-2$ dimensional, a contradiction. Thus, one
can choose a point $p\in Y_{w_P,\Pet}$ not on the hypersurface defined by $\Gamma({\bf z})$. This point satisfies (\ref{eqn:supposepoint}).
\excise{First suppose for some fixed $j$ and every
$p\in Y_{w_P,\Pet}$, $\alpha_{j,j+1}(p)=0$. Consider the variety
\[Z=\{F_{\bullet}\in GL_n/B: N\cdot F_i\subset F_{i+1} \mbox{\ if $i\neq j$ and \ } N\cdot F_j\subseteq F_{j}\}.\]
In fact $Z$ is a regular nilpotent Hessenberg variety, see Section~7.1. It follows from a dimension formula of
J.~Tymoczko \cite{Tym07} that $\dim Z=n-2$. However, our assumption about $\alpha_{j,j+1}$ implies that not only is
$Y_{w_P,\Pet}={\mathcal N}_{w_P,\Pet}$ (set-theoretically), but also $Y_{w_P,\Pet}={\mathcal N}_{w_P,Z}$ (set-theoretically).
However, this is a contradiction since the former equality says $\dim Y_{w_P,\Pet}=n-1$ and the second equality says
$\dim Y_{w_P,\Pet}=n-2$. This allows us to conclude that
\begin{equation}
\label{eqn:alphaanygiven}
\alpha_{j,j+1}({\bf z})\not\in\sqrt{I_{w_P,\Pet}} \mbox{ \ for any given $j$.}
\end{equation}
Now suppose
\[\Gamma({\bf z})=\prod_{1\leq j\leq n-2}\alpha_{j,j+1}({\bf z})\]
vanishes identically on $Y_{w_P,\Pet}$. Thus $\Gamma({\bf z})\in \sqrt{I_{w_P,\Pet}}$. However, we know $Y_{w_P,\Pet}={\mathcal N}_{w_P,\Pet}$ (set-theoretically) is irreducible (since $\Pet$ is irreducible).
Hence $\sqrt{I_{w_P,\Pet}}$ is a prime ideal. Thus at least one of
$\alpha_{j,j+1}({\bf z})\in {\sqrt{I_{w_P,\Pet}}}$.
However, this is a direct contradiction of (\ref{eqn:alphaanygiven}).

Hence $\Gamma({\bf z})$ does not vanish identically on $Y_{w_P,\Pet}$, and therefore there is a point $p\in Y_{w_P,\Pet}$ not on the
hypersurface ${\mathcal V}(\Gamma({\bf z}))$. This point $p$ is the one we desire (\ref{eqn:supposepoint}).}
\end{proof}

Next, arrange the rows of the
Jacobian $\left[\frac{\partial g_{k,j}}{\partial \zz}\right]$ associated to generators $g_{k,j}$ by (say) favoring
smaller $j$ first and breaking ties by favoring smaller $k$. This manifests (via echelon form) that the Jacobian, evaluated at $p$,
has full rank (having established ${n-1 \choose 2}$ linearly independent columns)
and thus $p$ is a smooth point of $Y_{w_P,\Pet}$.\qed

\section{Proof of Theorem~\ref{thm:singlocus}}

The proof is in two parts. First, we need that any $gB=bw_P B\in {\mathcal J}_{w_P,\Pet}$ is singular when $w_P$ is not on the
asserted ``smooth list'' from (II). Next, we prove that for the three $w_P$ on that list, the points $w_P B$ are smooth.
Since the point $w_P B$ is the most singular in ${\mathcal J}_{w_P,\Pet}$
(by Lemma~\ref{lemma:goodcases}(II) and Lemma~\ref{prop:Petjust}(V)),
all points in that cell are smooth. This establishes the condition (I) and the equivalence of (I) with (II).

For the first part, recall that if $v\in S_n$ and $w\in S_{N}$ for $n\leq N$ then  $w$ {\bf contains the pattern} $v$ if there exists
a choice of {\bf embedding indices}
$1\leq \phi_1<\phi_2<\ldots<\phi_n\leq N$
such that $w(\phi_1),w(\phi_2),\ldots,w(\phi_n)$
are in the same relative order as $v(1),\ldots,v(n)$.

 It is straightforward to verify the equivalence (II)$\Leftrightarrow$(III)
from this definition. We say that $w$ {\bf avoids} $v$ if no such indices exist.

The following two lemmas are clear from the Lemma~\ref{lemma:whatarefixedpoints}(VI):

\begin{lemma} \label{phi'}
Suppose $w_P B \in {\rm Pet}_n$. If $w_P$ contains the pattern $123$ then one can choose
the embedding indices $\phi_1<\phi_2<\phi_3$ so that $w_P(\phi_1)=1$ and $\phi_2=\phi_1+1$.
\end{lemma}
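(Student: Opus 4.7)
The plan is to use the very explicit form of $w_P$ provided by Lemma~\ref{prop:Petjust}(VI), namely that $w_P$ is a concatenation of $k$ consecutive decreasing blocks of sizes $i_1,i_2,\ldots,i_k$ with
\[
w_P(i_1)=1,\qquad w_P(i_1+1)=i_1+i_2,\qquad w_P(i_1+i_2+1)=i_1+i_2+i_3,
\]
and so on. The first observation is that, since $w_P$ is a permutation, $\phi_1$ is forced by the requirement $w_P(\phi_1)=1$; by the block description this forces $\phi_1=i_1$, so the demand $\phi_2=\phi_1+1$ determines $\phi_2=i_1+1$, and then $w_P(\phi_2)=i_1+i_2>1=w_P(\phi_1)$ is automatic. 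Thus the entire content of the lemma is the existence of a $\phi_3>i_1+1$ with $w_P(\phi_3)>i_1+i_2$.

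Next I would argue that $k\geq 3$. The key point is that each of the $k$ blocks of $w_P$ is a maximal decreasing run, so any strictly increasing subsequence of $w_P$ can contain at most one entry from each block. Consequently, if $w_P$ contains the pattern $123$ (a strictly increasing subsequence of length $3$), then $w_P$ must have at least three blocks, i.e., $k\geq 3$. This is the one place where the hypothesis that $w_P$ contains $123$ is used.

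With $k\geq 3$ in hand, I would then simply take $\phi_3=i_1+i_2+1$, the starting position of the third block. By Lemma~\ref{prop:Petjust}(VI) we have
\[
w_P(\phi_3)=i_1+i_2+i_3>i_1+i_2=w_P(\phi_2),
\]
and $\phi_3>\phi_2$, completing the construction of the required embedding indices $\phi_1<\phi_2<\phi_3$ with $w_P(\phi_1)<w_P(\phi_2)<w_P(\phi_3)$, $w_P(\phi_1)=1$, and $\phi_2=\phi_1+1$.

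I do not expect a serious obstacle here. The only step requiring any real thought is the ``decreasing-blocks cover'' argument for $k\geq 3$; the rest is a direct read-off from the explicit form of $w_P$ supplied by Lemma~\ref{prop:Petjust}(VI). One could alternatively verify $k\geq 3$ by a brief case check ($k=1$ gives $w_P=w_0$, which is decreasing, and $k=2$ gives $w_P=i_1\,i_1{-}1\cdots 1\,n\,n{-}1\cdots i_1{+}1$, in which every entry to the right of position $i_1+1$ is less than $w_P(i_1+1)=n$, so no $123$ pattern can end to the right of $\phi_2$), but the block-covering observation is cleaner.
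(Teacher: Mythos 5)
Your proof is correct, and it follows exactly the route the paper intends: the paper gives no argument at all, declaring both Lemma~\ref{phi'} and Lemma~\ref{2143} ``clear from Lemma~\ref{lemma:whatarefixedpoints}(VI),'' and your write-up is precisely that read-off from the decreasing-block form of $w_P$ (with the observation that a $123$ pattern forces $k\geq 3$ blocks supplying the only nontrivial step).
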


\begin{lemma} \label{2143}
Suppose that $w_P B\in {\rm Pet}_n$ and avoids $123$.
If $w_P$ contains the pattern $2143$ then one can choose the embedding
indices $\phi_1<\phi_2<\phi_3<\phi_4$
to satisfy $\phi_3=\phi_2+1$.
\end{lemma}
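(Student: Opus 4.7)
The plan is to exploit the explicit form of $w_P$ from Lemma~\ref{prop:Petjust}(VI): in one-line notation, $w_P$ is a concatenation of $k$ maximal decreasing ``blocks'' of sizes $i_1,\ldots,i_k$, where block $j$ occupies positions $i_1+\cdots+i_{j-1}+1$ through $i_1+\cdots+i_j$ and takes the values $i_1+\cdots+i_j,\, i_1+\cdots+i_j-1,\, \ldots,\, i_1+\cdots+i_{j-1}+1$ in that order.

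First I would reduce to $k=2$. If $k \geq 3$, the last entries of the first three blocks take values $1,\, i_1+1,\, i_1+i_2+1$ at strictly increasing positions and thus form a $123$ pattern, ruled out by hypothesis. If $k=1$, then $w_P = w_0$ is strictly decreasing and cannot contain $2143$, since that pattern requires positions $\phi_2 < \phi_3$ with $w_P(\phi_2) < w_P(\phi_3)$. The assumption that $w_P$ contains $2143$ therefore forces $k = 2$, so $w_P = i_1,\, i_1-1,\, \ldots,\, 1,\, n,\, n-1,\, \ldots,\, i_1+1$.

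Next, a short case check on how the four entries of a $2143$ pattern may distribute among the two decreasing blocks shows the split must be $2$--$2$: the splits $4$--$0$, $3$--$1$, $1$--$3$, and $0$--$4$ each conflict with the strictly decreasing order within a single block (in particular, no three successive ranks of $2143$ form a decreasing sequence). In particular $i_1 \geq 2$ and $n - i_1 \geq 2$, and the entries drawn from block one realize the pattern ranks $\{2,1\}$ while those from block two realize $\{4,3\}$.

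The final step is explicit construction. I would set $\phi_2 := i_1$ (the last position of block one) and $\phi_3 := i_1+1$ (the first position of block two), so that $\phi_3 = \phi_2+1$ by definition, with $w_P(\phi_2) = 1$ and $w_P(\phi_3) = n$. Any $\phi_1 \in \{1,\ldots,i_1-1\}$ together with any $\phi_4 \in \{i_1+2,\ldots,n\}$ then completes the embedding: $w_P(\phi_2) = 1$ is the global minimum and $w_P(\phi_3) = n$ the global maximum among the four chosen values, while $w_P(\phi_1) \leq i_1 < i_1+1 \leq w_P(\phi_4)$ supplies the required comparison between the two middle ranks, producing the relative order $(2,1,4,3)$. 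The only mildly delicate step is ruling out $k \geq 3$; everything else follows by directly inspecting the block structure.
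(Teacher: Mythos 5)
Your proof is correct and follows exactly the route the paper intends: the authors simply declare this lemma (and Lemma~\ref{phi'}) ``clear from'' the explicit block form of $w_P$ in Lemma~\ref{prop:Petjust}(VI), and your argument — ruling out $k\geq 3$ via a forced $123$, ruling out $k=1$, forcing a $2$--$2$ split across the two decreasing blocks, and then choosing $\phi_2=i_1$, $\phi_3=i_1+1$ — is precisely the fleshed-out version of that observation.
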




Recall the notation and definitions preceding the definition of $I_{w_P,\Pet}$ in Section~1.
Although we need to prove that all points $bw_P B$ are singular, strictly speaking we only show
$w_P B$ is singular, for simplicity of notation. However, in view of the discussion in Example~\ref{exa:petex},
 the general case is a trivial extension. Specifically,
Lemma~\ref{lemma:nonzeroconstantterms}, the three claims
that follow it, and their proofs, are the exactly same in the general case, where we have
made the substitution $z_{ij}\mapsto z_{ij}+\sum_{i<k}b_{ik}z_{k,j}$ from
Example~\ref{exa:petex}. Thus, we leave the details to the interested reader.

\begin{lemma} \label{lemma:nonzeroconstantterms}
Suppose $\phi$ satisfies $w_P(\phi)=1$. Then none of the $\alpha_{\phi,k} ( {\bf z} )$ for $1\leq k\leq \phi+1$ contains a nonzero constant term.
\end{lemma}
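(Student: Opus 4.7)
The plan is to prove by induction on $\ell$ that $\alpha_{\phi,\ell}$ has zero constant term, that is, $\alpha_{\phi,\ell}(\mathbf{0}) = 0$ where $\mathbf{0}$ denotes setting all free variables $z_{ab}$ to zero.

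The key preliminary observation, which I would establish first, is a ``no pivot above row $1$'' statement: when $w_P(\phi) = 1$, the entry $z_{w_P(\ell)+1,\phi}$ has zero constant term for every $\ell \geq 1$. Indeed, by (\ref{eqn:pivot}) the only entry of column $\phi$ that is specialized to $1$ is the one in row $w_P(\phi) = 1$, whereas our entry sits in row $w_P(\ell)+1 \geq 2$. Therefore $z_{w_P(\ell)+1,\phi}$ is either specialized to $0$ or left as a free variable; in either case its value at $\mathbf{z} = \mathbf{0}$ is $0$.

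Granted this, the induction is transparent using the recursion (\ref{eqn:alphaformula}). The base case $\ell = 1$ reads $\alpha_{\phi,1} = z_{w_P(1)+1,\phi}$, which has no constant term by the preliminary observation. For the inductive step, assume $\alpha_{\phi,t}(\mathbf{0}) = 0$ for every $t < \ell$. From
\[
\alpha_{\phi,\ell} = z_{w_P(\ell)+1,\phi} - \sum_{t=1}^{\ell-1} \alpha_{\phi,t}\, z_{w_P(\ell),t},
\]
the first summand has zero constant term by the preliminary observation. For each summand in the sum, $z_{w_P(\ell),t}$ is (a variable, $0$, or $1$, hence) a polynomial with a well-defined value at $\mathbf{z} = \mathbf{0}$; multiplying by $\alpha_{\phi,t}$, whose constant term vanishes by induction, produces a polynomial with zero constant term. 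Summing and subtracting preserves this, yielding $\alpha_{\phi,\ell}(\mathbf{0}) = 0$.

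There is no real obstacle here: the only subtle point is the preliminary observation, which is essentially the remark that $w_P(\phi) = 1$ forces the ``only pivot in column $\phi$'' to sit in row $1$, blocking every $z_{w_P(\ell)+1,\phi}$ from being the specialized value $1$. Once that is noted, the recursion (\ref{eqn:alphaformula}) does the rest mechanically. (A cleaner alternative proof is to observe that evaluating the subsystem of (\ref{eqn:bigone}) indexed by rows $k = w_P(1),\ldots,w_P(\phi+1)$ at $\mathbf{z} = \mathbf{0}$ gives $\alpha_{\phi,\ell}(\mathbf{0}) = [N \cdot e_{w_P(\phi)}]_{w_P(\ell)} = [N e_1]_{w_P(\ell)} = 0$, since the columns of the permutation matrix $w_P$ are standard basis vectors; but the inductive proof stays closer to the algebraic setup already on the page.)
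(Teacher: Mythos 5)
Your proof is correct and follows essentially the same route as the paper: the paper observes that $w_P(\phi)=1$ forces $N\cdot Z_\phi$ to involve only zeros and unspecialized variables (your ``no pivot above row $1$'' observation, since the $k$-th entry of $N\cdot Z_\phi$ is $z_{k+1,\phi}$), and then inducts on $k$ via (\ref{eqn:alphaformula}) exactly as you do.
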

\begin{proof}
Since $w_P(\phi)=1$, $N \cdot Z_{\phi}$ is a column vector that only involves zeros and the unspecialized variables $z_{t,\phi}$. Hence
$\alpha_{\phi,1}$ satisfies the stated conclusion. The claim follows for all the
$\alpha_{\phi,k}$ by (\ref{eqn:alphaformula}) and induction on~$k$.
\end{proof}

Suppose $w_P$ contains $123$. Let $\phi_1<\phi_2<\phi_3$
be the embedding indices from Lemma~\ref{phi'}: $1=w_P(\phi_1)<w_P(\phi_2)<w_P(\phi_3)$ with $\phi_2=\phi_1+1$.

\begin{claim}
\label{claim:onedefine}
$g_{n,\phi_1}$
is one of the defining generators of $I_{w_P,\Pet}$.
\end{claim}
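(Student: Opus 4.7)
The plan is to unpack the definition of a defining generator together with the explicit block description of $w_P$ from Lemma~\ref{prop:Petjust}(VI). Recall that $g_{k,j}$ is a defining generator of $I_{w_P,\Pet}$ exactly when $k \neq w_P(\ell)$ for every $1 \leq \ell \leq j+1$. So for the specific generator $g_{n,\phi_1}$, I need to verify that $n \notin \{w_P(1), w_P(2), \ldots, w_P(\phi_1+1)\}$.

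First I would pin down $\phi_1$. Since $w_P$ is a concatenation of decreasing blocks of sizes $i_1, i_2, \ldots, i_k$, the value $1$ occurs in a unique position, namely at the end of the first block: $w_P(i_1) = 1$. From $w_P(\phi_1) = 1$ it follows that $\phi_1 = i_1$, and hence $\phi_1 + 1 = i_1 + 1$ is the first position of the second block, where $w_P(i_1+1) = i_1 + i_2$.

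Next I would locate $n$ in $w_P$. Again by the block description, $n$ appears uniquely at the start of the final block, in position $i_1 + i_2 + \cdots + i_{k-1} + 1$. So the claim reduces to the inequality
\[
i_1 + i_2 + \cdots + i_{k-1} + 1 \;>\; i_1 + 1,
\]
which is equivalent to $k \geq 3$.

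The final step uses the hypothesis from Lemma~\ref{phi'} that $w_P$ contains the pattern $123$. I would rule out the cases $k = 1$ and $k = 2$: if $k = 1$ then $w_P = w_0 = n\, (n{-}1)\, \cdots\, 1$ is strictly decreasing and avoids $123$; if $k = 2$ then $w_P$ is the concatenation of exactly two decreasing sequences, and any three positions must include two lying in the same block, which contributes a descent and precludes a $123$-pattern. Thus $w_P$ containing $123$ forces $k \geq 3$, whence $i_1 + i_2 + \cdots + i_{k-1} + 1 \geq i_1 + i_2 + 1 > i_1 + 1 = \phi_1 + 1$, so $n \neq w_P(\ell)$ for $1 \leq \ell \leq \phi_1 + 1$, and $g_{n,\phi_1}$ is indeed a defining generator.

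I do not anticipate any real obstacle here; the argument is a direct structural check on the one-line notation of $w_P$, and the only subtlety is the small-$k$ exclusion, which is a short pattern-avoidance observation.
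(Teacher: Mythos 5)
Your proof is correct and follows essentially the same route as the paper: both arguments reduce the claim to checking that $n\notin\{w_P(1),\dots,w_P(\phi_1+1)\}$ using the explicit block form of $w_P$ from Lemma~\ref{prop:Petjust}(VI), and both invoke the $123$-containment hypothesis to exclude the degenerate block structures. The only cosmetic difference is that the paper argues by contradiction and dispatches the case $\ell=\phi_1+1$ via the inequality $w_P(\phi_2)<w_P(\phi_3)$, whereas you rule out $k\le 2$ directly by the observation that a concatenation of at most two decreasing blocks has no increasing subsequence of length three.
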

\begin{proof}
From the definitions the assertion is clearly true provided
\begin{equation}
\label{eqn:provided}
n\neq w_P(\ell)  \mbox{\ for $1\leq\ell\leq \phi_1+1=\phi_2$.}
\end{equation}
Suppose (\ref{eqn:provided}) does not hold. Since $w_P(\phi_1)=1$, this bad $\ell$ is not equal to $\phi_1$.
Also, $\ell\neq \phi_1+1(=\phi_2)$ since this violates the assumptions about
$\phi_1,\phi_2,\phi_3$.
Hence $1\leq \ell<\phi_1$. This combined with $w_P(\phi_1)=1$ and the form
of $w_P$ given by Lemma~\ref{lemma:whatarefixedpoints}(VI) implies $w_P=w_0$. However, $w_0$
does not contain $123$, a contradiction. Thus the claim holds.
\end{proof}

    Since by Lemma~\ref{lemma:nonzeroconstantterms} we know
$\alpha_{\phi_1,k}( {\bfz})$ has no nonzero constant terms and since
\begin{eqnarray}\nonumber
g_{n,\phi_1}(\bfz) & = & \alpha_{\phi_1,1}(\bfz)  {z}_{n,1}+ \cdots + \alpha_{\phi_1,\phi_1+1}(\bfz)
{z}_{n,\phi_1+1}
\end{eqnarray}
the minimal degree of any term of $g_{n,\phi_1} (\bfz)$ is two.
    Thus the row of the Jacobian $\left[\frac{\partial g_{k,j}(\bfz)}{\partial \zz}\right]$ (evaluated at the origin)
    associated to $g_{n,\phi_1}({\bf {z}})$ consists only of zeros. As the number of rows in Jacobian equals the ${\rm codim} \ Y_{w_P,\Pet}$, the rank at that point is strictly smaller than the codimension of $Y_{w_P,\Pet}={\mathcal N}_{w_P,\Pet}$ in ${\mathbb C}^{n\choose 2}$.
    Hence by the
    Jacobian criterion, the origin is not a smooth point. As the origin corresponds to $w_PB$ in our coordinates, $w_P B$ is singular in ${\rm Pet}_n$.

    Now suppose $w_P$ contains $2143$. In view of our analysis above, we may assume $w_P$ avoids $123$.
    Let $\phi_1<\phi_2<\phi_3<\phi_4$ be the embedding indices
from Lemma~\ref{2143}. Since $w_P$ avoids $123$, its form, as stated in Lemma~\ref{lemma:whatarefixedpoints}(VI)
satisfies $k\leq 2$. The containment of $2143$ means $k=2$ and $i_1,i_2\geq 2$. Hence we can
actually assume $w_P(\phi_1)=2$ and $w_P(\phi_2)=1$.

Using these assumptions, and an argument such as for (\ref{claim:onedefine}) it is straightforward that

\begin{claim}
$g_{n,\phi_1}({\bfz})$ and $g_{n-1,\phi_2}({\bfz})$ are defining generators of $I_{w_P,\Pet}$.
\end{claim}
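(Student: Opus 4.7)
The plan is to directly verify the condition for $g_{k,j}$ to be a defining generator of $I_{w_P,\Pet}$, namely that $k \notin \{w_P(\ell) : 1 \leq \ell \leq j+1\}$, for the two pairs $(k,j)=(n,\phi_1)$ and $(k,j)=(n-1,\phi_2)$. This is a combinatorial check that parallels the argument of Claim~\ref{claim:onedefine}, using the explicit block form of $w_P$ recorded in Lemma~\ref{prop:Petjust}(VI).

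First I would fix the concrete structure already established in the paragraph preceding the claim: since $w_P$ avoids $123$, the parabolic data has $k \leq 2$ (three ascending blocks would produce a $123$ by picking one index from each), and since $w_P$ contains $2143$ we must have $k=2$ with $i_1,i_2 \geq 2$. With these facts in hand, the embedding can be pushed to the block boundary so that $\phi_1=i_1-1$, $\phi_2=i_1$, $\phi_3=i_1+1$, with $w_P(\phi_1)=2$ and $w_P(\phi_2)=1$. Reading off the two decreasing blocks of $w_P$ immediately gives
\[\{w_P(1),\ldots,w_P(\phi_1+1)\}=\{1,2,\ldots,i_1\},\]
and
\[\{w_P(1),\ldots,w_P(\phi_2+1)\}=\{1,2,\ldots,i_1\}\cup\{n\},\]
the latter because $w_P(i_1+1)=n$ is the leading entry of the second block.

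Finally, I would use $i_2\geq 2$, equivalently $i_1\leq n-2$, to conclude that $n\notin\{1,\ldots,i_1\}$ and $n-1\notin\{1,\ldots,i_1\}\cup\{n\}$, which yield both generator conditions simultaneously. No substantive obstacle is expected; the argument is pure bookkeeping that is forced once the block form of $w_P$ and the chosen embedding $\phi_1<\phi_2<\phi_3<\phi_4$ are in hand. The only mild subtlety is confirming that $n-1$ does not sneak into the second initial set via $w_P(i_1+1)$, which is ruled out because that entry is $n$, not $n-1$, by the structure of the second decreasing block.
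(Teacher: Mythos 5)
Your verification is correct and follows exactly the route the paper intends: the paper dismisses this claim as ``straightforward'' via an argument like that of the preceding claim, and your bookkeeping—checking $n\notin\{w_P(1),\dots,w_P(\phi_1+1)\}=\{1,\dots,i_1\}$ and $n-1\notin\{w_P(1),\dots,w_P(\phi_2+1)\}=\{1,\dots,i_1\}\cup\{n\}$ using $i_2\geq 2$—is precisely that argument made explicit. No gaps.
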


\begin{claim}
The Jacobian $\left[\frac{\partial g_{k,j}(\bfz)}{\partial \zz}\right]$, when evaluated at the origin ${\bf 0}$, has the property that
the row corresponding to $g_{n,\phi_1}({\bfz})$
is a scalar multiple of the row corresponding to $g_{n-1,\phi_2}(\bfz)$.
\end{claim}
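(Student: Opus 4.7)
The plan is to compute directly the linear parts of $g_{n,\phi_1}(\bfz)$ and $g_{n-1,\phi_2}(\bfz)$ at the origin, since the rows of the Jacobian at the origin are (by transposition) exactly these linear parts. First I would observe that because $w_P$ avoids $123$ but contains $2143$, Lemma~\ref{lemma:whatarefixedpoints}(VI) forces $k=2$ and $i_1, i_2 \geq 2$, so that $\phi_1 = i_1-1$ and $\phi_2 = i_1$ both lie in the first block, while $\phi_3 = i_1+1$ and $\phi_4 \geq i_1+2$ lie in the second.

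For $g_{n,\phi_1}$, the plan is to establish, by induction on $\ell$ using the recursion~(\ref{eqn:alphaformula}), that
\[
\alpha_{\phi_1,\ell}({\bf 0}) = \begin{cases} 0 & \text{if } 1 \leq \ell < \phi_2, \\ 1 & \text{if } \ell = \phi_2. \end{cases}
\]
The base and inductive steps require identifying, for each $\ell \leq \phi_2$, that the leading term $z_{w_P(\ell)+1,\phi_1}$ in~(\ref{eqn:alphaformula}) is either a structural zero forced by~(\ref{eqn:pivot}) (when $2 \leq \ell < \phi_2$), an unspecialized variable vanishing at the origin (when $\ell = 1$), or the pivot entry $z_{2,\phi_1} = 1$ (exactly when $\ell = \phi_2$, since $w_P(\phi_2)+1 = 2 = w_P(\phi_1)$). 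Combined with the fact that each $z_{n,\ell}$ for $1 \leq \ell \leq \phi_2$ is an unspecialized variable (its pivot column is $\phi_3 > \phi_2$), the linear part of $g_{n,\phi_1}$ at the origin then collapses to a single term, namely $z_{n,\phi_2}$.

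For $g_{n-1,\phi_2}$, the analysis is more efficient because Lemma~\ref{lemma:nonzeroconstantterms} already tells us that every $\alpha_{\phi_2,\ell}$ lies in the maximal ideal at the origin. Moreover, for every $\ell$ in the range $1 \leq \ell \leq \phi_2+1 = \phi_3$, the entry $z_{n-1,\ell}$ is an unspecialized variable vanishing at the origin, because its pivot column is $\phi_4 > \phi_3$. Consequently the sum $\sum_\ell \alpha_{\phi_2,\ell}\, z_{n-1,\ell}$ contributes nothing to the linear part, and the linear part of $g_{n-1,\phi_2}$ reduces to the single term $-z_{n,\phi_2}$ coming from the $-{z}_{k+1,j}$ in~(\ref{gformula}). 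Thus the row of the Jacobian at the origin associated to $g_{n,\phi_1}$ is $(-1)$ times the row associated to $g_{n-1,\phi_2}$, as desired.

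The main obstacle will be the bookkeeping that distinguishes pivots, structural zeros, and genuine variables among the entries $z_{i,j}$ that appear in these computations. This is essentially mechanical once the shape of $w_P$ given by Lemma~\ref{lemma:whatarefixedpoints}(VI) is expanded in terms of $\phi_1, \phi_2, \phi_3, \phi_4$, but it is precisely where the $2143$ pattern (and the $123$-avoidance, which rules out a larger $k$) enters the argument, via the coincidence $w_P(\phi_2)+1 = w_P(\phi_1)$ that sets $\alpha_{\phi_1,\phi_2}({\bf 0})$ equal to the pivot value $1$.
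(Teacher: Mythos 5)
Your proposal is correct and follows essentially the same route as the paper: an induction on the recursion (\ref{eqn:alphaformula}) showing $\alpha_{\phi_1,\ell}(\mathbf{0})=0$ for $\ell\leq\phi_1$ and $\alpha_{\phi_1,\phi_2}(\mathbf{0})=1$ (your pivot coincidence $w_P(\phi_2)+1=w_P(\phi_1)$ is exactly the paper's ``$1$ at the top of $NZ_{\phi_1}$''), combined with Lemma~\ref{lemma:nonzeroconstantterms} for $g_{n-1,\phi_2}$, so that the two rows reduce to $z_{n,\phi_2}$ and $-z_{n,\phi_2}$. The only quibble is cosmetic: the pivot column of row $n-1$ is $i_1+2=\phi_3+1$, which need not be the chosen $\phi_4$, but all that matters is that it exceeds $\phi_3$.
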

\begin{proof}
Observe
that $\alpha_{\phi_1,\phi_1+1} (\bfz) (=\alpha_{\phi_1,\phi_2} (\bfz))$
contains the constant term $1$ (coming from the $1$ at the top of $N{Z}_{\phi_1}$).
In addition, all entries in $N{Z}_{\phi_1}$, other than the top one, do not contain a constant term. By induction on $k\geq 1$,
$\alpha_{\phi_1,k}(\bfz)$ does not contain a constant term
for $1 \leq k \leq \phi_1$. Thus $g_{n,\phi_1}(\bfz)$ contains one linear term (and terms of higher degree), which is $z_{n,\phi_2}$ (from the lefthand side of (\ref{gformula})).

Since $w_P(\phi_2)=1$, Lemma \ref{lemma:nonzeroconstantterms} states that all of the $\alpha_{\phi_2,k}(\bfz)$ do not have
constant terms.  Hence $g_{n-1,\phi_2}(\bfz)$ contains a single
linear term (and terms of higher degree), i.e., the $z_{n,\phi_2}$ term coming from the right hand side of (\ref{gformula}).  Thus, in the column of the Jacobian (evaluated at the origin) associated to $z_{n,\phi_2}$ in rows $g_{n,\phi_1} (\bfz)$ and $g_{n-1,\phi_2}(\bfz)$ the entry is $\pm 1$ and $\mp 1$ respectively. In other columns, the entries in those two rows are zero.
\end{proof}

We have thus completed the argument for the first part of the proof.

We now turn to the second half of the proof, i.e., checking smoothness at the three
points indicated by the list in (II). As mentioned at the beginning of this section, smoothness
at these points implies smoothness at all other points of their respective strata.

\noindent
\emph{Case $w_P=w_0$:} If $w_0B$ is singular, then so is every point of $U'w_0 B$ by Lemma~\ref{prop:Petjust}(II) and Lemma~\ref{lemma:goodcases}(I)
combined. However, as ${\rm Pet}_n$ is
the closure of this orbit, that would imply every point of ${\rm Pet}_n$ is singular, which is impossible.

\smallskip
\noindent
\emph{Case $w_P = n-1 \ n-2 \ \cdots \ 2 \ 1 \ n$:} Fix a total order $<$ on the generators $\{ g_{k,j} \}$ by
\begin{equation}
\label{eqn:theorder}
(k',j') < (k,j) \mbox{\ if $j'<j$ and \ } (k',j) < (k,j) \mbox{\ if $k'<k$.}
\end{equation}

For this $w_P$,
 \begin{equation}
 \label{eqn:alphaspecial}
 \alpha_{j,j+1} = 1 - \left(\sum_{\ell =1}^{j} \alpha_{j,\ell } \ z_{w(j+1),\ell }\right)
 \end{equation}
  for all $1 \leq j \leq n-2$; cf.~(\ref{eqn:alphaformula}).  Hence, the term $z_{k,j+1}$ appears as a linear term in $g_{k,j}$.
  It does not appear in any $g_{k',j'}$ where $(k',j')<(k,j)$: if $j'<j$ then this variable appears nowhere in (\ref{eqn:bigone});
  if $j=j'$ and $k'\neq k$ no $\alpha_{j,\ell }$ involves this variable, nor does $NZ_{j}$, so neither can $g_{k',j}$.
  Therefore,
\begin{equation}
  \label{eqn:othercols}
  \frac{\partial g_{k,j}}{\partial z_{k,j+1}}  \Bigg |_{\bfz = {\bf 0}} = 1  \mbox{ \ and \ }  \frac{\partial g_{k',j'}}{\partial z_{k,j+1}}   \Bigg |_{\bfz = {\bf 0}}=0 \mbox{ \ for $(k',j')<(k,j)$.}
\end{equation}
We thus conclude the rows of the Jacobian (evaluated at the origin) are linearly independent.  Hence the point $w_PB$ is smooth.

\smallskip
\noindent
\emph{Case $w_P = 1,n,n-1, \ldots,2$:}  Isolating the case $j=1$ first, Lemma \ref{lemma:nonzeroconstantterms} implies that $\alpha_{1,1}$ and $\alpha_{1,2}$ do not contain any constant terms.  Thus, the only linear term in each $g_{k,1}$ is the
$z_{k+1,1}$ that comes from $NZ_{1}$ in (\ref{eqn:bigone}).  Therefore,
\begin{equation}\label{column1}  \frac{\partial g_{k,1}}{\partial z_{k+1,1}}  \Bigg |_{\bfz = {\bf 0}} = 1  \mbox{ \ and \ }  \frac{\partial g_{k',1}}{\partial z_{k+1,1}}   \Bigg |_{\bfz = {\bf 0}}=0 \mbox{ \ for  $k'\neq k$}.\end{equation}

For $2 \leq j \leq n-2$, (\ref{eqn:alphaspecial}) holds.
This means each $g_{k,j}$ with $2 \leq j \leq n-2$ contains $z_{k,j+1}$ as a linear term.
As with the case $w_P=n-1 \ n-2 \ \cdots \ 2 \ 1 \ n$,
$z_{k,j+1}$ does not appear in any $g_{k',j'}(\bfz)$ where $(k',j')<(k,j)$ in the order (\ref{eqn:theorder}).
Combining this with (\ref{column1}) we see (\ref{eqn:othercols}) holds for $1\leq j\leq n-2$. Again,
the columns of the Jacobian (evaluated at the origin) are linearly independent, and thus $w_PB$ is smooth. \qed

\section{$K$-polynomials, equivariant localization and $h$-polynomials}

\subsection{Proof of Theorem~\ref{thm:localize}}
We will in fact prove:
\begin{equation}
\label{eqn:infact}
[{\mathcal O}_{\rm Pet_n}]_S|_{w_P B}=k_{w_P, \Pet}(\chi)=\prod_{j=1}^{n-2}\prod_{\ell \neq w_P (i), 1\leq i\leq j+1}\left ( 1-\chi^{\ell+1-w_P(j)} \right ) \in K_S(w_P B)\cong {\mathbb Z}[\chi^{\pm 1}],
\end{equation}
where $k_{w_P,\Pet}(\chi)$ is the $K$-polynomial of ${\mathbb C}[{\bf z}]/I_{w_P,\Pet}$ with respect the ${\mathbb Z}$-graded coarsening of (\ref{eqn:thegrading}), associated to $S\subset T$. This coarsening assigns to the variable $z_{ij}$ the degree
\begin{equation}
\label{eqn:coarse}
{\rm deg}(z_{ij})=i-w_P(j).
\end{equation}
The first equality of (\ref{eqn:infact}) holds by the discussion of (P.IV) that concludes at (\ref{eqn:Oequalsk}).

We use a standard fact: if
$I=\langle g_1,\ldots,g_N\rangle\subset {\mathbb C}[x_1,\ldots,x_M]$ is a ${\mathbb Z}$-graded ideal that is
a complete intersection, then its $K$-polynomial equals
\[k({\mathbb C}[x_1,\ldots,x_M]/I; \chi)=\prod_{i=1}^N 1-\chi^{{\rm deg}(g_i)}.\]
This is an easy consequence of, e.g., \cite[Exercise~8.12]{Miller.Sturmfels}.

By Theorem~\ref{thm:main1}, we can apply the above fact to the generators
$\{g_{ \ell ,j}\}$. It remains to determine the degree of the generator $g_{\ell,j}$ for $1\leq j\leq n-2$ and
$ \ell \neq w_P(t)$ for $1\leq t\leq j+1$.
Since $g_{\ell,j}$ is
homogeneous with respect to the grading (\ref{eqn:coarse}), we need only compute the degree of any term of $g_{\ell,j}$
and show it is equal to $\ell+1-w_P(j)$.

Since $g_{\ell,j}$ is a generator, the definitions imply the summand $\alpha_{j,j+1}z_{\ell,j+1}$ appears.
If $w_P(j+1)<n$ then the desired degree is easily established since the unspecialized variable $z_{w_P(j+1)+1,j}$ appears in $\alpha_{j,j+1}$ by
(\ref{eqn:alphaformula}). Only slightly harder is the case $w_P(j+1)=n$ where this variable does not exist.
Here, by (\ref{eqn:alphaformula}), $\alpha_{j,j+1}$ contains the summand $\alpha_{j,1}z_{n,1}$. However $\alpha_{j,1}=z_{w_P(1)+1,j}$.
Therefore we conclude $\alpha_{j,j+1} z_{\ell,j+1}$ contains the term $z_{w_P(1)+1,j}z_{n,1}z_{\ell,j+1}$ which has
degree
$(w_P(1)+1-w_P(j))+(w_P(n)-w_P(1))+(\ell-w_P(j+1))=\ell+1-w_P(j)$.
The second equality of (\ref{eqn:infact}) is thus deduced. \qed

\subsection{The (projectivized) tangent cone}
Theorem~\ref{thm:singlocus} asserts that for ${\rm Pet}_n$,
the property of being singular is constant on each
${\mathcal J}_{w_P,\Pet}$.
Lemma~\ref{lemma:goodcases}(II) and Lemma~\ref{prop:Petjust}(V) combined imply Hilbert-Samuel multiplicity is
maximized on ${\mathcal J}_{w_P,\Pet}$ at $w_P B$.
\begin{question}
Is ${\rm mult}_{g,\Pet}$, or $h_{g,\Pet}(\uu)$, constant on ${\mathcal J}_{w_P,\Pet}$?
\end{question}

Let us restrict attention to the $h$-polynomials $h_{w_P,\Pet}(\eta)$.
These are tabulated for $n=4$ in Table~\ref{table:KH} below, along with the $K$-polynomials (which were independently computed using {\tt SAGE}, and agree with Theorem~\ref{thm:localize}).
We state the following easy facts:

\begin{lemma}
\label{lemma:BSsemi}
\begin{itemize}
\item[(I)] There is a local isomorphism between any two points of
the orbit $(U'\rtimes S)\cdot gB$.
\item[(II)] $(id)B \in {\overline{(U'\rtimes S)\cdot w_P B}}$.
\item[(III)] If $f:\Pet\to {\mathbb R}$ is any $U'\rtimes S$-invariant,
upper-semicontinuous function on $\Pet$, then it is maximized at $(id)B$.
\end{itemize}
\end{lemma}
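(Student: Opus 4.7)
The plan is as follows. For \textbf{(I)}, I first verify that $U'\rtimes S$ acts on $\Pet$. The conjugation $s(t)\cdot u\cdot s(t)^{-1}$ rescales the $k$-th superdiagonal entry of any $u\in U'$ by $t^{-k}$ uniformly (keeping entries constant along each superdiagonal), so $S$ normalizes $U'$; combining the two actions from Lemma~\ref{prop:Petjust}(I),(II) we obtain a well-defined $U'\rtimes S$-action on $\Pet$. Statement (I) is then immediate from Lemma~\ref{lemma:goodcases}(I) with $L=U'\rtimes S$.

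For \textbf{(II)}, first observe that $(U'\rtimes S)\cdot w_PB = U'\cdot w_PB$, since $w_PB$ is a $T$-fixed point and $S\subseteq T$. Thus it suffices to show $(id)B\in\overline{U'\cdot w_PB}$. The orbit closure is a closed subvariety of $\Pet$, hence projective, on which the connected solvable (in fact abelian unipotent) group $U'$ acts, so the Borel fixed point theorem produces a $U'$-fixed point inside it, and the whole matter reduces to the claim
\[
\Pet^{U'} = \{(id)B\}.
\]
The main idea is to locate a regular unipotent element inside $U'$: take $u_0=\exp(N)$, whose $k$-th superdiagonal entry is the constant $1/k!$, so $u_0\in U'$; and since $\log u_0 = N$ is regular nilpotent, $u_0$ is regular unipotent in $GL_n$. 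The $u_0$-invariant subspaces of ${\mathbb C}^n$ form the single chain $\ker N\subset\ker N^2\subset\cdots\subset\ker N^n$, so the standard flag is the unique $u_0$-fixed flag in $GL_n/B$. Hence $\Pet^{U'}\subseteq (GL_n/B)^{u_0}=\{(id)B\}$, with equality because $(id)B$ is manifestly $U'$-fixed. The only delicate point is spotting a regular unipotent inside the rather small group $U'$; once $u_0$ is identified, Borel's theorem together with the uniqueness of its fixed flag closes (II).

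For \textbf{(III)}, apply Lemma~\ref{lemma:goodcases}(II) twice with $L=U'\rtimes S$. First, given $gB\in{\mathcal J}_{w_P,\Pet}$, Lemma~\ref{prop:Petjust}(V) shows $w_PB\in\overline{S\cdot gB}\subseteq\overline{L\cdot gB}$, so the $L$-invariance and upper-semicontinuity of $f$ force $f(gB)\leq f(w_PB)$. Second, by (II) of this lemma, $(id)B\in\overline{L\cdot w_PB}$, and the same argument yields $f(w_PB)\leq f((id)B)$. Chaining the two inequalities across all cells of the decomposition in Lemma~\ref{prop:Petjust}(IV) exhausts $\Pet$ and shows that $f$ attains its maximum at $(id)B$.
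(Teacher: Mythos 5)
Your proposal is correct. Parts (I) and (III) follow essentially the same route as the paper: (I) is Lemma~\ref{prop:Petjust}(I),(II) plus Lemma~\ref{lemma:goodcases}(I) (your explicit check that $S$ normalizes $U'$, scaling the $k$-th superdiagonal by $t^{-k}$, is a worthwhile detail the paper leaves implicit), and (III) is the same chaining of Lemma~\ref{prop:Petjust}(V) with part (II) via upper-semicontinuity. Part (II) is where you genuinely diverge. The paper's argument is a Bruhat-order descent: for $gB\in BwB/B$ with $gB\neq wB$ one has $\lim_{t\to 0}s(t)\cdot gB=uB$ with $u<w$, and iterating this inside the ($S$- and $U'$-stable) orbit closure drives one down to $(id)B$; note that this descent still secretly needs that no $w_QB$ with $w_Q\neq id$ is $U'$-fixed (otherwise the descent could stall), which is exactly the fact you isolate. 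Your route instead invokes the Borel fixed point theorem for the connected solvable group $U'$ acting on the projective orbit closure, and then identifies the fixed locus by exhibiting the regular unipotent $u_0=\exp(N)\in U'$, whose unique fixed flag is the standard one. This is arguably cleaner and more self-contained: it makes explicit the key fact $\Pet^{U'}=\{(id)B\}$ and avoids having to manage the termination of the descent, at the cost of importing Borel's theorem; the paper's version stays entirely within the $S$-limit formalism already set up in Lemma~\ref{prop:Petjust}(V).
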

\begin{proof}
(I) is Lemma~\ref{prop:Petjust} parts (I) and (II) combined with Lemma~\ref{lemma:goodcases}(I).
In general, if $gB\in BwB/B$ and $gB\neq wB$,
then $\lim_{t \rightarrow 0} s(t) \cdot gB = uB$ with $u< w$. (II) just applies this.
Finally, (III) holds by part (II),
together with the hypotheses on $f$.
\end{proof}

\begin{example}
 In general, the $U'\rtimes S$ action is not transitive on ${\mathcal J}_{w_P,\Pet}$.
 Consider the $4$-dimensional ${\mathcal J}_{w_P,{\rm Pet}_6}$ where $P=S_{3}\times S_3$.  One checks the $3$-dimensional subgroup $U'' \subset U'$,
obtained by setting $q_1 = q_2 = 0$ in the $U'$ of Example~\ref{example:BP},
fixes any $gB$ in $J_{w_P,{\rm Pet}_6}$. Thus,
any $U' \rtimes S$ orbit through $w_PB$
has dimension at most $3$ and hence cannot
cover ${\mathcal J}_{w_P,{\rm Pet}_6}$. \qed
\end{example}

Lemma~\ref{lemma:BSsemi}(III) says that $(id)B$ is the ``most singular''
on $\Pet$. This, and the connection of ${\mathcal N}_{id,\Pet}$ to $QH^{\star}(GL_n/B)$ (see Section~3) motivates this case of Problem~\ref{prob:B}:
\begin{problem}
Find a combinatorial formula for either ${\rm mult}_{id}(\Pet)$ and/or
$h_{id,\Pet}(\uu)$.
\end{problem}

The sequence of multiplicities at this point begins:
\[{\rm mult}_{123.B}({\rm Pet}_3)\!=\!2, \    {\rm mult}_{1234.B}({\rm Pet}_4)\!=\!4, \
{\rm mult}_{12345.B}({\rm Pet}_5)\!=\!12, \ {\rm mult}_{123456.B}({\rm Pet}_6)=38, \ \ldots\]
In addition, the sequence of $h$-polynomials begins:
\[h_{123.B, {\rm Pet}_3}(\uu)=1+\uu, \
h_{1234.B, {\rm Pet}_4}(\uu)=1+2\uu+\uu^2, \
h_{12345.B, {\rm Pet}_5}(\uu)=1+3\uu+4\uu^2+3\uu^3+\uu^4, \]
\[h_{123456.B, {\rm Pet}_6}(\uu)=1+4\uu+8\uu^2+10\uu^3+8\uu^4+4\uu^5+2\uu^6+\uu^7,\ldots.\]

\begin{question}
Are the coefficients of $h_{g,\Pet}(\uu)$ unimodal, or more strongly, log-concave?
\end{question}
While unimodality might be true of $h$-polynomials of Schubert varieties, log-concavity is known not to be true \cite[Example~2.2]{Li.Yong2}. These conjectures are not known to follow for any
    obvious reasons, even if we knew ${\rm gr}_{{\mathfrak m}_{gB}}{\mathcal O}_{gB,{\rm Pet}_n}$ to be Cohen-Macaulay.

The following example contrasts with Conjecture~\ref{conj:reduced} concerning Richardson varieties:

\begin{example}
\label{exa:nonreduced}
${\rm Spec}({\rm gr}_{{\mathfrak m}_{gB}}{\mathcal O}_{gB,{\rm Pet}_n})$
need not be reduced. This occurs for $2134\cdot B\in {\rm Pet}_4$ where the ideal
defining the tangent cone is
$I_{2134\cdot B,{\rm Pet}_4}'=\langle z_{32}-z_{41}, z_{42}, z_{41}^2\rangle$.
\end{example}

\begin{table} \begin{tabular}{ |l | c | l | } \hline $w_P$ &
$k_{w_P,\Pet}(\chi)$ & $h_{w_P, \Pet}(\uu)$ \\ \hline $1234$ & $(1-\chi^3)^2(1-\chi^4)$ &
$1+2\uu+\uu^2$ \\ \hline $2134$ & $(1-\chi^2) (1-\chi^3)(1-\chi^4) $& $1+\uu$ \\
\hline
$1324$ & $(1-\chi^2)^2(1-\chi^4)$ & $1+\uu$\\ \hline
$1243$ & $(1-\chi^2)(1-\chi^3)(1-\chi^4)$ &$1+\uu$ \\ \hline
$2143$ &$(1-\chi^3)^2(1-\chi^2)$ & $1+\uu$\\ \hline
$3214$ &$(1-\chi^{-1})(1-\chi^2)(1-\chi^3) $& $1$\\ \hline
$1432$ &$(1-\chi^{-1})(1-\chi^2)(1-\chi^3)$ & $1$\\ \hline
$4321$ & $(1-\chi^{-1})^2(1-\chi^{-2})$ & $1 $\\ \hline
\end{tabular}
\caption{\label{table:KH} $K$-polynomials and $h$-polynomials at $w_PB \in {\rm Pet}_4$}
\end{table}

\begin{conjecture}
\label{conj:Petmain}
${\rm gr}_{{\mathfrak m}_{gB}}{\mathcal O}_{gB,{\rm Pet}_n}$ is Cohen-Macaulay
and thus $h_{g,\Pet}(\uu)\in {\mathbb N}[\uu]$. The function $f:\Pet\to {\mathbb R}$ defined by $gB\mapsto$``coefficient of $\uu^i$ in $h_{g,\Pet}(\uu)$'', is upper-semicontinuous on $\Pet$.
\end{conjecture}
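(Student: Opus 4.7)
The plan is to derive all three parts of the conjecture from one structural input: a complete intersection description of each tangent cone. First I will show that $I'_{g,\Pet}$ is generated by a regular sequence in ${\mathbb C}[\bfz]$; Cohen-Macaulayness and nonnegativity of $h$-coefficients then follow, and upper-semicontinuity becomes a question about the degrees of those generators.

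\emph{Step 1 (Complete intersection tangent cone).} Since $\Pet$ is irreducible of dimension $n-1$ (Lemma~\ref{prop:Petjust}(III)), $I'_{g,\Pet}$ has codimension ${n-1 \choose 2}$ in ${\mathbb C}[\bfz]$, while by Corollary~\ref{thm:first}, $I_{g,\Pet}$ is itself a complete intersection on ${n-1 \choose 2}$ generators. The plan is to produce, via elementary row operations on the $g_{k,j}$ (equivalently, by running Mora's tangent cone algorithm following (P.V)), a set of generators whose lowest-degree homogeneous parts form a regular sequence. Example~\ref{exa:2143general} shows that this reduction is genuinely required: the lowest-degree forms of $g_{41}$ and $g_{32}$ both equal $z_{42}$, but replacing $g_{32}$ by $g_{32}-g_{41}$ fixes the collision, and the three resulting lowest-degree forms $z_{41}-z_{32},\ z_{42},\ z_{31}z_{32}+z_{41}z_{31}$ do form a regular sequence, producing $h_{w_P,\Pet}(\uu)=1+\uu$ as in Table~\ref{table:KH}. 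I would carry out this reduction uniformly using the recursive structure of~(\ref{eqn:alphaformula}), likely through case analysis keyed to the pattern criterion of Theorem~\ref{thm:singlocus}. Once in hand, the lowest-degree forms generate $I'_{g,\Pet}$ by the standard basis procedure reviewed in (P.V) (cf.~\cite[Proposition~15.28]{Eisenbud}), giving a complete intersection in ${\mathbb C}[\bfz]$. Cohen-Macaulayness is then immediate, and
\[h_{g,\Pet}(\uu) = \prod_{i=1}^{{n-1 \choose 2}} \bigl(1+\uu+\cdots+\uu^{d_i(g)-1}\bigr) \in {\mathbb N}[\uu],\]
where $d_1(g),\ldots,d_{{n-1 \choose 2}}(g)$ are the degrees of the initial forms.

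\emph{Step 2 (Upper-semicontinuity).} Given the product formula above, it suffices to show that the multiset of degrees $\{d_i(g)\}$ behaves upper-semicontinuously in $g$ under specialization (i.e., factors can only acquire higher-degree terms in the limit). Within a single stratum ${\mathcal J}_{w_P,\Pet}$, Lemma~\ref{prop:Petjust}(V) provides an $S$-equivariant one-parameter degeneration of any $gB$ to $w_P B$; promoting this to a family of patch ideals and tracking the induced $S$-grading on the generators should force the $d_i$'s to weakly increase in the limit, so upper-semicontinuity within the stratum reduces to flatness of a family of complete intersections of constant codimension. To compare across cells, I would use Lemma~\ref{prop:Petjust}(II) (exhibiting $\Pet$ as the $U'$-orbit closure of $w_0 B$) together with Lemma~\ref{lemma:BSsemi} to route degenerations through a chain of $S$-specializations respecting the cell closure order, then patch the local statements to obtain the global one on $\Pet$.

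\emph{Main obstacle.} I expect the principal difficulty to lie in Step~1's uniform reduction: producing, for every $w_P$ and every $gB \in {\mathcal J}_{w_P,\Pet}$, an explicit choice of generators whose initial forms are a regular sequence, in a combinatorially controlled way from which the degrees $d_i(g)$ can be read off. Associated graded rings are notoriously delicate in families, and the complete intersection structure from Step~1 is precisely what I would use to certify flatness in Step~2 (complete intersections with constant codimension and generator degrees are automatically flat). The combinatorial bookkeeping in Step~1 appears tractable given the recursion~(\ref{eqn:alphaformula}), but a clean uniform statement adapted to the pattern structure of $w_P$ is the key technical hurdle.
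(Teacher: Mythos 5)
The statement you are proving is Conjecture~\ref{conj:Petmain}; the paper offers no proof of it, only computational evidence for small $n$ (via (P.V)), so there is no argument of record to compare against. Judged on its own terms, your proposal has a fatal gap in Step~1. You propose to show that the tangent cone ideal $I'_{g,\Pet}$ is generated by a regular sequence of ${n-1\choose 2}$ forms, and you deduce the product formula $h_{g,\Pet}(\uu)=\prod_i(1+\uu+\cdots+\uu^{d_i(g)-1})$. Any such product is palindromic, and any graded complete intersection is Gorenstein. But the paper's own computation gives $h_{123456.B,{\rm Pet}_6}(\uu)=1+4\uu+8\uu^2+10\uu^3+8\uu^4+4\uu^5+2\uu^6+\uu^7$, which is not palindromic; as Kummini's remark in Section~6.2 records, this already shows ${\rm gr}_{{\mathfrak m}_{123456.B}}{\mathcal O}_{123456.B,{\rm Pet}_6}$ is \emph{not} Gorenstein, hence not a complete intersection. (One can also check directly that $38=h(1)$ cannot be written as $\prod d_i$ with $\sum(d_i-1)=7$.) So the uniform reduction you hope to carry out via (\ref{eqn:alphaformula}) does not exist: the fact that $I_{g,\Pet}$ is a complete intersection (Corollary~\ref{thm:first}) does not pass to the initial ideal, and your $2143$ example is misleadingly special. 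Since Step~2 leans on the complete-intersection structure both for positivity and for flatness of the degenerating family, it collapses with Step~1.

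Two further cautions even if Step~1 were repaired. First, Example~\ref{exa:nonreduced} shows the tangent cones of $\Pet$ need not even be reduced, so arguments that implicitly identify the tangent cone with its underlying variety (e.g., reading off $h$ from a geometric degeneration) must be handled at the scheme level. Second, the paper explicitly warns that, unlike Hilbert--Samuel multiplicity, the individual coefficients of the $h$-polynomial are \emph{not} automatically upper-semicontinuous under specialization, so Step~2 cannot be reduced to generic flatness plus passage to the limit; this is precisely why the upper-semicontinuity assertion is conjectural rather than formal. A viable attack would more plausibly go through establishing Cohen--Macaulayness of ${\rm gr}_{{\mathfrak m}_{gB}}{\mathcal O}_{gB,\Pet}$ by some non--complete-intersection mechanism (e.g., a Gr\"{o}bner degeneration to a shellable initial complex in the spirit of (P.II)(ii)), which remains open.
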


M.~Kummini pointed out that the asymmetry of the coefficients of
$h_{123456.B,{\rm Pet}_6}(\eta)$ computed above implies that
${\rm gr}_{{\mathfrak m}_{123456.B}}{\mathcal O}_{123456.B,{\rm Pet}_6}$ is not Gorenstein.

For a point $p\in X$,
the Cohen-Macaulayness of the local ring ${\mathcal O}_{p,X}$ is necessary but not sufficient for
${\rm gr}_{{\mathfrak m}_{p}}{\mathcal O}_{p,{\rm Pet}_n}$ to be Cohen-Macaulay.
That Cohen-Macaulayness implies the positivity of the $h$-polynomial is \cite[Corollary~4.1.10]{Bruns.Herzog}.
While Hilbert-Samuel multiplicity is upper-semicontinuous, the coefficients of the $h$-polynomial need not be.

Using (P.V) we verified the Cohen-Macaulayness claim of Conjecture~\ref{conj:Petmain} for $n\leq 4$ and all but
two cases of $n\leq 5$, as well as in many other instances. Table~\ref{table:KH} is also consistent with the upper-semicontinuity claim.

\section{Analysis of additional torus invariant varieties}

Conjecture~\ref{conj:Petmain} is an analogue of a conjecture stated for Schubert varieties in
 \cite[Section~1.2]{Li.Yong2}. In regards to the second meta-question of Section~1.1,
we briefly explore the prevalence of the properties of Conjecture~\ref{conj:Petmain}, by examining some other $X\subseteq GL_n/B$.

\subsection{Hessenberg varieties and Springer fibers}

Fix $\nilp \in M_{n\times n}$ and ${\mathfrak h}:{\mathbb N}\to {\mathbb N}$.
Define
${\rm Hess}( \nilp ,{\mathfrak h})=\{F_{\bullet}\in GL_n/B : \ \nilp\cdot F_i\subseteq F_{ {\mathfrak h} (i) } \}$.
These were studied by \cite{DPS}. The definition we use is stated in \cite{Tym06a}.
Two families of Hessenberg varieties arise from the cases that
$\nilp$ is regular semisimple (eigenvalues of $\nilp$ are distinct and nonzero), and that
$\nilp$ is regular nilpotent (this includes $\Pet$).
In the former case, ${\rm Hess}( \nilp ,{\mathfrak h})$ is smooth. However, ${\rm Hess} (\nilp,{\mathfrak h})=\emptyset$ unless
${\mathfrak h}(i)\geq i$ and ${\mathfrak h}(i+1)\geq {\mathfrak h}(i)$. When
${\mathfrak h}$ satisfies these conditions, ${\mathfrak h}$ is called a \textbf{Hessenberg function}.
${\rm Hess}( \nilp ,{\mathfrak h})$ is
a {\bf Hessenberg variety} if ${\mathfrak h}$ is a Hessenberg function.

Generators for $I_{g,{\rm Hess}(\nilp,{\mathfrak h})}$ (up to radical) can be obtained by
a  modification of the construction of the generators for $I_{g,\Pet}$
described in Section~1.2 and Example~\ref{exa:petex}. (Small examples exist where these ``obvious'' generators do not define
a radical ideal.) The proof of Theorem~\ref{thm:main1} likely
generalizes to
show that if $\nilp$ is regular nilpotent, and ${\mathfrak h}$ is a Hessenberg function, each patch ideal of
${\rm Hess}(\nilp,{\mathfrak h})$ is a local complete intersection.

We checked that the Cohen-Macaulay claim of Conjecture~\ref{conj:Petmain} holds for all regular nilpotent Hessenberg varieties for $n\leq 4$,
most cases of $n\leq 5$ and a few cases of $n\leq 6$. Also, we made some checks of the upper-semicontinuity claim for $n\leq 4$.

\excise{\begin{remark}
\label{remark:secretcanhappen}
When the Hessenberg function satisfies ${\mathfrak h}(i)=i$,
the ``typically easy'' equations (i.e., the obvious analogues of the equations given in Section~1.2)
indicated in (P.II) may not define a radical ideal. For example, at $(id)B$,
in the regular nilpotent Hessenberg variety corresponding to ${\mathfrak h}(1,2,3)=(1,3,3)$,
one naively obtains the equations
\[z_{31}-z_{21}^2=0,\ \ \  z_{21}z_{31}=0.\]
However, $I_{id, {\rm Hess}(\nilp, {\mathfrak h} )}=\langle z_{31},z_{21} \rangle$.\qed
%
\end{remark}}

Another example of Hessenberg varieties is
the {\bf Springer fiber} associated to a partition $\lambda$ of $n$.
Define ${\rm Springer}_{\lambda} = \left \{ F_{\bullet} \in GL_n/B : \nilp \cdot F_{i} \subset F_{i} \text{ for all } 1 \leq i \leq n \right \}$, where
$\nilp$ is
nilpotent matrix of Jordan type $\lambda$.
A key fact is that the irreducible components ${\rm Springer}_T$
of ${\rm Springer}_{\lambda}$ are indexed by standard Young tableaux $T$ of shape $\lambda$.
See, e.g., \cite{Fresse, PerSmi10} and the references for results on
singularities of these components.

\excise{\begin{question}
Determine generators for $I_{g,{\rm Springer}_{\lambda}}$ and $I_{g,{\rm Springer}_T}$.
\end{question}}

\excise{\begin{example}\label{springer(2,1)}
Consider
${\rm Springer}_{(2,1)}\in GL_3/B$. One computes that:
\[ {\mathcal N}_{123, {\rm Springer}_{(2,1)}}=\left \{ \begin{pmatrix} 1 & 0 & 0 \\ 0 & 1 & 0 \\ 0 & z_{32} & 1 \end{pmatrix} \right \}, \ \  {\mathcal N}_{312, {\rm Springer}_{(2,1)}}=\left \{  \begin{pmatrix} z_{11} & 1 & 0 \\ 0 & 0 & 1 \\ 1 & 0 & 0 \end{pmatrix} \right \},\]
and
\[{\mathcal N}_{132, {\rm Springer}_{(2,1)}}= \left \{  \begin{pmatrix} 1 & 0 & 0 \\ 0 & z_{22} & 1 \\ z_{31} & 1 & 0 \end{pmatrix} : z_{31} \cdot z_{22} = 0 \right \} .\]
Note,
${\mathcal J}_{132,{\rm Springer}_{(2,1)}}=B_{132}\cdot (132\cdot B)$
where $B_{132}=\left\{\left(\begin{matrix} 1 & 0 & 0\\ 0 & 1 & q\\ 0 & 0 & 1\end{matrix}\right), q\in {\mathbb C}\right\}$.
Hence if $gb\in {\mathcal J}_{132,{\rm Springer}_{(2,1)}}$
we may assume $g=b\cdot 132$ where $b\in B_{132}$. Then
\[{\mathcal N}_{g,{\rm Springer}_{(2,1)}} = \left \{  \begin{pmatrix} 1 & 0 & 0 \\ z_{21}+qz_{31} & z_{22}+q & 1 \\ z_{31} & 1 & 0 \end{pmatrix} \Bigg | \begin{matrix} z_{21}+qz_{31}=0, \\ z_{31}(z_{22}+q)=0 \end{matrix} \right \}.\]

The Jacobian of $I_{g, {\rm Springer}_{(2,1)}}$ is
$\begin{pmatrix} q& 1 & 0 \\q+z_{22} & 0 & z_{31} \end{pmatrix}$.
Thus, by the Jacobian criterion, $gB\in {\mathcal J}_{132,{\rm Springer}_{(2,1)}}$ is smooth except when $g=132$ (corresponding to $q=0$).
This agrees with the fact ${\rm Springer}_{(2,1)}\in GL_3/B$ consists of two $\mathbb{P}^1$'s meeting at point ($132\cdot B$). On the other hand, we witness a contrast with the equivalence ``(I)$\Leftrightarrow$(II)'' of Theorem~\ref{thm:singlocus} for $X=\Pet$.\qed}

We do not know if Conjecture~\ref{conj:Petmain} holds for ${\rm Springer}_{T}$'s. It is false for ${\rm Springer}_{\lambda}$ in general. For ${\rm Springer}_{\lambda}$, we checked
the Cohen-Macaulay claim of Conjecture~\ref{conj:Petmain}
for $n \leq 5$, and certain ``larger'' cases such as $\lambda = (3,2,1)$ and $\lambda=(2,2,1,1)$
(the latter case was the first known to have singular irreducible components \cite{Var79}).

\begin{example}
The aformentioned singular component of ${\rm Springer}_{(2,2,1,1)}$ found by \cite{Var79} is the unique
singular one
in that Springer fiber \cite{Fre}. It has
eight singular points from ${\rm Springer}_{(2,2,1,1)}\cap (GL_6/B)^T$, corresponding to
the permutations:
$135624, \ 135642, \ 136524$, $136542$, $315624, \ 315642, \ 316524,  \ 316542$.
On this component, these all have Hilbert-Samuel multiplicity $6$,
the same $h$-polynomial $1+4\uu+\uu^2$, and Gorenstein tangent cones.\qed
\end{example}

\excise{
\begin{example}
Intersections of irreducible components of ${\rm Springer}_{\lambda}$ are of interest.
Recently, it was shown that these intersections can be reducible, and not pure dimensional (and thus not Cohen-Macaulay): see Theorem~3.15 and Example~3.17 of \cite{Melnikov.Pagnon}, respectively. Hence Conjecture~\ref{conj:main}(A) cannot extend to
${\rm Springer}_{\lambda}$ itself, in general.
\end{example}

In Table~\ref{table:Springer}, we tabulate
the $h$-polynomials for the fixed points of each ${\rm Springer}_{\lambda}$.
(The table omits $\lambda = (1,1,1,1)$ since the corresponding Springer fiber is $GL_n/B$ itself.)
\begin{table}[h]
\begin{tabular}{| l | l | l | l |} \hline $\lambda$ & $h_{wB, {\rm Springer}_{\lambda}}=1$ & $h_{wB,{\rm Springer}_{\lambda}}=1+\uu$ & $h_{wB,{\rm Springer}_{\lambda}}=1+2\uu$\\
\hline
(4) & 1423, 1432, 2143, 2314, 3241, 3214 & 1243, 1342, 2134, 3124 & 1234, 1324 \\ \hline
(3,1) & 1234, 4123 & 1243, 1423 & \\ \hline
(2,2) & 1234, 1342, 3124, 3412 & 1324, 3142 & \\ \hline
(2,1,1) & 1234, 1243, 3124, 3412, 4123, 4312 & 1324, 1423, 3142, 4132 & 1342, 1432\\ \hline
\end{tabular}
\caption{\label{table:Springer} $h$-polynomials for the $n=4$ Springer fibers}
\end{table}
\vspace{-.3in}
}
\subsection{Richardson varieties}
The {\bf Richardson variety} is defined by
$X_{u}^v={\overline{BuB/B}}\cap {\overline{B_{-}vB/B}}$.
This is empty unless $v\leq u$ and is of dimension $\ell(u)-\ell(v)$ otherwise, where $\ell(u)$ is
the Coxeter length of $u$.
It is the {\bf Schubert variety} $X_{u}={\overline{BuB/B}}$
when $v=id$ and the
{\bf opposite Schubert variety}
$X^v={\overline{B_{-}vB/B}}$
when $u=w_0$. Richardson varieties
are of importance in connection to Schubert calculus since
$[X_u^v] = [X_u] \cup [X_{w_0v}] = \sum_{x \in S_n} C_{u,w_0v}^{x} [X_{x}] \in H^*(GL_n/B; \mathbb Z)$,
where $C_{u,w_0v}^x$ is a {\bf generalized Littlewood-Richardson coefficient}.

From the definitions,
$wB\in X_{u}^v \mbox{ \ if and only if $v\leq w\leq u$ (in Bruhat order).}$
In this case, it is known $X_{u}^v$ is reduced, irreducible and Cohen-Macaulay; see, e.g., \cite{Brion:notes}.

The decomposition (P.III) is, in this setting:
\begin{eqnarray}\nonumber
X_{u}^v & = & \coprod_w (BwB/B\cap X_{u}^v)
 =  \coprod_w BwB/B\cap ({\overline{BuB/B}}\cap {\overline{B_{-}vB/B}})\\ \nonumber
& = & \coprod_w (BwB/B\cap {\overline{BuB/B}})\cap X^v
=  \coprod_{w\leq u} (BwB/B)\cap X^v.\nonumber
\end{eqnarray}
Each
${\mathcal J}_{w,X_u^v}=(BwB/B)\cap X^v$ is a {\bf Kazhdan-Lusztig variety}. This variety
is isomorphic, up to crossing by an affine space,
to the patch of the opposite Schubert variety $X^v$ \cite[Lemma A.4]{KL}. It
is singular in general.
The maximal torus $T$ in $B \cap B_{-}$ acts on each ${\mathcal J}_{w, X_u^v}$,
contracting each point towards $wB$. Hence $wB$ is the most singular point on that strata, in the sense of Lemma~\ref{lemma:goodcases}(II).

We want generators for $I_{w,X_{u}^v}$.
Let $J_{w,u}$ be the ideal in ${\mathbb C}[Z^{(w)}]$ generated by all size $1+r_{ij}^{u}$ minors of the southwest
$(n-i+1) \times j$ submatrix of $Z^{(w)}$. Here $r_{ij}^u$ is the rank matrix that counts the
number of $1$'s southwest of matrix coordinate $(i,j)$ in the permutation matrix for $u$. Let $J^{w,v}$
be defined by $1+{\overline r}_{ij}^{v}$
minors of the northwest $i\times j$ submatrix where ${\overline r}_{ij}^{v}$ is the number of $1$'s northwest of matrix
coordinate $(i,j)$ in the permutation matrix for~$v$.

The following is probably folklore, and is implicit in \cite[Lemma~A.4]{KL}:
\begin{proposition}
\label{prop:Richardson}
$I_{w,X_u^v}=J_{w,u}+J^{w,v}$.
\end{proposition}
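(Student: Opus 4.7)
The plan is to combine two classical inputs: (i) that the Richardson variety $X_u^v$ is the \emph{scheme-theoretic} intersection of the Schubert variety $X_u$ and the opposite Schubert variety $X^v$ in $GL_n/B$, and (ii) the determinantal (Fulton-style) description of $X_u$ and $X^v$ in local coordinates on a patch. Once these are in place, the proposition follows by restricting to the open patch $wB_{-}B/B$.

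First, I would invoke the standard result that Schubert varieties and opposite Schubert varieties in $GL_n/B$ are compatibly Frobenius split (see, e.g., \cite{Brion:notes}). Consequently, the scheme-theoretic intersection $X_u \cap X^v$ is reduced, and hence agrees with the reduced Richardson variety $X_u^v$ as a subscheme of $GL_n/B$. Since the patch $wB_{-}B/B$ is an affine open in $GL_n/B$, scheme-theoretic intersection commutes with passage to the patch, and ideals of scheme-theoretic intersections are sums of the constituent ideals. Therefore
\[
I_{w, X_u^v} \;=\; I_{w, X_u} + I_{w, X^v}
\]
as ideals in $\mathbb{C}[Z^{(w)}]$.

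Next, I would identify each summand individually. By Fulton's theorem (cf. \cite{Fulton, Knutson.Miller}), $X_u$ is scheme-theoretically cut out in $GL_n/B$ by the rank conditions that the southwest $(n-i+1)\times j$ submatrix of a flag representative has rank at most $r_{ij}^u$. Pulling these back along the local section $\sigma_w$ of (P.I) over the patch $wB_{-}B/B$, the defining equations become the $(1+r_{ij}^u)$-minors of the southwest $(n-i+1)\times j$ submatrix of the generic matrix $Z^{(w)}$; this gives $I_{w,X_u} = J_{w,u}$. An entirely parallel argument, using the opposite Bruhat decomposition and the northwest submatrices together with the rank numbers $\bar{r}_{ij}^v$, yields $I_{w,X^v} = J^{w,v}$.

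Combining the two steps completes the proof: $I_{w,X_u^v} = I_{w,X_u} + I_{w,X^v} = J_{w,u} + J^{w,v}$. The main obstacle is really the first step, namely the scheme-theoretic reducedness of $X_u \cap X^v$, since without it one would only obtain an equality up to radical; however, this reducedness is a standard consequence of Frobenius splitting on $GL_n/B$, and is taken as known. The second step is essentially a translation of Fulton's rank criteria into the coordinates $Z^{(w)}$ on the patch, and is precisely the content implicit in \cite[Lemma A.4]{KL}.
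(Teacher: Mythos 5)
Your proposal is correct and follows essentially the same route as the paper: identify $I_{w,X_u}=J_{w,u}$ and $I_{w,X^v}=J^{w,v}$ via Fulton's scheme-theoretic rank conditions pulled back to the patch, note that the sum of these ideals cuts out the scheme-theoretic intersection $X_u\cap X^v$ on the patch, and conclude the sum is radical because that intersection is reduced. The only cosmetic difference is that you make the Frobenius-splitting origin of the reducedness explicit, whereas the paper simply cites it as known.
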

\begin{proof}
Fulton~\cite{Fulton} showed that if $\pi:GL_n\to GL_n/B$ is the natural map, then the ideal defining $\pi^{-1}(X_u)$ is scheme-theoretically
generated by the size $1+r_{ij}^u$ minors of the southwest $(n-i+1)\times j$ submatrix.
Thus it follows from (\ref{eqn:patch}) that $I_{w,X_u}=J_{w,u}$. Similarly, $I_{w,X^v}=J^{w,v}$. Since $I_{w,X_u}$ scheme-theoretically cuts out an open neighbourhood of $wB$ in $X_u$ and $I_{w,X^v}$ does the same inside
$X^v$, then $I_{w,X_u}+I_{w,X^v}$ does it for
$wB\in X_u\cap X^v$. Now
$I_{w,X_u}+I_{w,X^v}$ is radical since $X_u\cap X^v$ is reduced.
The result then follows.
\end{proof}

\begin{problem}
Find an explicit Gr\"{o}bner basis for $I_{w,X_{u}^v}$.
\end{problem}
Computation suggests that there is a Gr\"{o}bner basis with
a squarefree monomial ideal.

\excise{
\begin{example}
Let
\[v=132546\leq w=361524\leq 561423=u.\]
The minor conditions coming from $u$ consist of the size $1$, $3$ and $4$  minors
of the southwest $1\times 1$, $5\times 3$ and $4\times 5$ submatrices, respectively. The minors conditions coming from $v$ are the size $2$ and $4$ minors of the northwest $2\times 2$ and $4\times 4$ submatrices of $Z^{(361524)}$.
That is,
\[
I_{w,X_{u}^v}=\left\langle z_{61}, \left|\begin{matrix} z_{21} & z_{22} & z_{23}\\ 1 & 0 & 0 \\ z_{41} & z_{42} & z_{43}
\end{matrix}\right|, \ldots, \left|\begin{matrix} 1 & 0 & 0 & 0 \\ z_{41} & z_{42} & z_{43} & z_{44}\\ z_{51} & z_{52} &z_{53} & 1\\ z_{61}&1&0&0\end{matrix}\right|,\ldots,
\left|\begin{matrix}z_{11}&z_{12}\\z_{21}&z_{22}\end{matrix}\right|,
\left|\begin{matrix} z_{11}&z_{12}&1&0\\z_{21}&z_{22}&z_{23}&z_{24}\\1&0&0&0\\z_{41}&z_{42}&z_{43}&z_{44}\end{matrix}\right|
\right\rangle,\]
where
\[Z^{(361524)}=\left(\begin{matrix} z_{11}&z_{12}&1&0&0&0\\
z_{21}&z_{22}&z_{23}&z_{24}&1&0\\1&0&0&0&0&0\\z_{41}&z_{42}&z_{43}&z_{44}&z_{45}&1\\
z_{51}&z_{52}&z_{53}&1&0&0\\z_{61}&1&0&0&0&0
\end{matrix}\right).\]
With this, we compute ${\rm mult}_{wB,X_{u}^v}=4$ and
$h_{wB, X_u^v}(\uu)=(1+2\uu+\uu^2)/(1-\uu)^8$.\qed
\end{example}
}

\excise{
\begin{conjecture}
\label{conj:factors}
The following factorization holds:
\begin{equation}
\label{eqn:hfactors}
h_{wB, X_{u}^v}(\uu)=h_{wB,X_u}(\uu)\cdot h_{wB,X^v}(\uu).
\end{equation}
\end{conjecture}
\end{example}

We have checked this exhaustively for $n\leq 5$, a large portion of the cases for $n=6$ as well as many
random singular cases for $7\leq n\leq 9$.
The equality (\ref{eqn:hfactors}) implies that $wB\in X_{u}^v$ is singular if at least one of $wB\in X_u$ or $wB\in X^v$ is singular,
which is true; see recent work on the singular locus of Richardson varieties \cite[Corollary~2.9]{Billey.Coskun}.

If Conjecture~\ref{conj:factors} is true, the following would be immediate:
\begin{equation}
\label{eqn:blahfactors}
{\rm mult}_{wB}(X_u^v)={\rm mult}_{wB}(X_u)\cdot {\rm mult}_{wB}(X^v).
\end{equation}
The analogous statement is true for Grassmannian Schubert varieties
\cite[Remark~7.6.6]{Kreiman}. One can give a proof of (\ref{eqn:hfactors})
in this same setting \cite{Li.Yong3}.}

\begin{conjecture}
\label{conj:reduced}
The tangent cones of $X_{u}^v$ are reduced.
\end{conjecture}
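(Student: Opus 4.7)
The plan is to combine the patch description $I_{w,X_u^v} = J_{w,u} + J^{w,v}$ from Proposition~\ref{prop:Richardson} with a Gr\"obner degeneration argument, and then transfer reducedness to the tangent cone via the prescription in (P.V). Fix any $wB \in X_u^v$. By \cite{WY:KL}, the Kazhdan-Lusztig ideal $J^{w,v}$ admits a Gr\"obner basis of minors with squarefree antidiagonal lead terms under a suitable term order. A parallel result for $J_{w,u}$ should follow by applying the natural anti-automorphism of $GL_n/B$ that exchanges $B$ and $B_-$, sending each $X_u$ to an opposite Schubert variety.

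First, I would choose a single term order $\prec$ on $\mathbb{C}[\bfz]$ that refines both of the above term orders and additionally places lowest-standard-degree monomials first. Then I would verify that the union of the two Gr\"obner bases, completed by any new elements produced by S-pair reductions between the two families of minors, is a Gr\"obner basis for $I_{w,X_u^v}$ under $\prec$ with squarefree initial ideal. By (P.II)(ii), this would also recover reducedness of $X_u^v$, consistent with known results.

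Next, I would execute the tangent-cone algorithm from (P.V). Because $\prec$ first favors lowest-degree monomials in the standard grading, passing to lowest-degree leading forms commutes with Gr\"obner reduction: homogenizing by $t$, computing a Gr\"obner basis, and setting $t=1$ yields a Gr\"obner basis of $I'_{w,X_u^v}$ with the same squarefree initial monomials. A squarefree monomial ideal cuts out a reduced Stanley--Reisner scheme, so by (P.II)(ii) the tangent cone $\mathrm{Spec}(\mathbb{C}[\bfz]/I'_{w,X_u^v})$ is reduced.

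The hard part will be the Gr\"obner basis step: combining Gr\"obner bases of two ideals almost never yields a Gr\"obner basis of their sum without additional reductions. One expects S-pairs between a southwest minor of $J_{w,u}$ and a northwest minor of $J^{w,v}$ to reduce in a controlled way governed by Pl\"ucker-type straightening laws, echoing the compatible Frobenius splitting of Richardson varieties; cf.\ \cite{Brion:notes, Brion.Kumar}. Making this combinatorially explicit --- for instance via a subword-complex description of the initial ideal indexed by pipe-dream-like objects simultaneously encoding the Schubert and opposite-Schubert data --- is the technical heart of the proof.
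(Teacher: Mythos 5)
The first thing to note is that the statement you are proving is labelled a \emph{conjecture} in the paper: the authors offer no proof of it, only computational evidence (exhaustive verification at $T$-fixed points for $n\leq 5$ and random singular cases for $6\leq n\leq 9$, using Proposition~\ref{prop:Richardson} and the recipe (P.V)). So there is no proof in the paper to compare against, and your proposal must stand on its own as a proof attempt. It does not: the step you yourself flag as ``the technical heart'' --- producing a Gr\"obner basis for $J_{w,u}+J^{w,v}$ with squarefree lead terms by completing the union of the two known bases --- is precisely the paper's open Problem~7.5 (``Find an explicit Gr\"obner basis for $I_{w,X_u^v}$,'' with the remark that computation merely \emph{suggests} a squarefree initial ideal exists). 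Asserting that the S-pairs ``reduce in a controlled way governed by Pl\"ucker-type straightening laws'' is a research program, not an argument, so the proposal is a strategy with its central step missing.

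There is also a second, more structural gap in the tangent-cone half of the argument. Even granting a squarefree Gr\"obner degeneration of $I_{w,X_u^v}$ under some global term order, this does not transfer to the ideal $I'_{w,X_u^v}$ of lowest-degree forms: a term order that ``places lowest-standard-degree monomials first'' is not a well-order (it would make $1$ the lead term of $1+x$), so Buchberger's algorithm does not apply and one must either use local orders with Mora's tangent-cone algorithm or the homogenization-by-$t$ device of (P.V); in either case one must separately prove that the resulting standard basis has squarefree lead terms, which is a strictly stronger statement than radicality of the patch ideal. The paper's own Example~\ref{exa:nonreduced} illustrates the danger: the patch ideal of ${\rm Pet}_4$ at $2134\cdot B$ is a radical complete intersection, yet its tangent-cone ideal $\langle z_{32}-z_{41},\, z_{42},\, z_{41}^2\rangle$ is non-reduced. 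So ``reducedness of the patch'' plus ``some squarefree Gr\"obner degeneration'' does not imply ``reduced tangent cone''; the compatibility of the degeneration with the degree filtration at the chosen point is exactly where the content lies, and it is not established. Finally, note that you only treat the $T$-fixed points $wB$; the conjecture concerns all points of $X_u^v$, and while the Kazhdan-Lusztig strata reduce the general point to a fixed point up to a product with affine space (\cite[Lemma A.4]{KL}), that reduction should at least be invoked explicitly.
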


Using Proposition~\ref{prop:Richardson} and (P.V),
we verified Conjecture~\ref{conj:reduced} and positivity of the $h$-polynomial
exhaustively at $T$-fixed points,
for $n\leq 5$, and for many random singular cases for $6\leq n\leq 9$.
We also checked the tangent cones at $T$-fixed points are Cohen-Macaulay,
for $n\leq 4$. For \emph{Grassmannian} Richardson varieties, these claims follow from
\cite[Remark~7.6.6]{Kreiman}. These facts, combined with the results/checks made for
the Schubert case \cite{Li.Yong2}, leave us reasonably convinced. However, we have not made
substantive checks of upper-semicontinuity of the coefficients of $h_{g,X_{u}^v}(\uu)$
in this setting.


\subsection{Peterson-Schubert varieties}

Let
$R_w={\rm Pet}_n\cap X_w$.
This scheme (it is reducible in general) is $S$-invariant since $S$ acts on both ${\rm Pet}_n$ and $X_w$.

\begin{lemma}
\label{lemma:Rirred}
$R_{w_P}$ is irreducible and of dimension $n-k$, where $k$ is the number of blocks of $w_P$ (i.e. the $k$ of Equation (1.1)).
\end{lemma}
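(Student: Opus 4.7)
Since $U_P\subseteq U\subseteq B$, Lemma~\ref{prop:Petjust}(IV) gives
\[\mathcal{J}_{w_P,\Pet}=U_P\cdot w_P B\subseteq Bw_PB/B\subseteq X_{w_P},\]
so $\mathcal{J}_{w_P,\Pet}\subseteq R_{w_P}$. Because $\mathcal{J}_{w_P,\Pet}\cong\mathbb{C}^{n-k}$ is irreducible of dimension $n-k$, its closure is an irreducible closed subvariety of $R_{w_P}$ of dimension $n-k$, and my goal is to identify $\overline{\mathcal{J}_{w_P,\Pet}}$ with $R_{w_P}$.

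First I would intersect the affine paving (\ref{eqn:thedecomposition}) with the Bruhat decomposition of $X_{w_P}$. Using that $\mathcal{J}_{w_{P'},\Pet}\subseteq Bw_{P'}B/B$, one has $\mathcal{J}_{w_{P'},\Pet}\subseteq X_{w_P}$ iff $w_{P'}\leq w_P$, yielding the set-theoretic paving
\[R_{w_P}=\coprod_{w_{P'}\leq w_P}\mathcal{J}_{w_{P'},\Pet}.\]
I would invoke the classical identification of Bruhat order on maximal parabolic elements with the refinement order on compositions (cf.~Bj\"orner--Brenti): $w_{P'}\leq w_P$ iff the composition of $P'$ refines the composition of $P$. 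Hence $w_{P'}\lneq w_P$ forces $k'>k$, so $\dim\mathcal{J}_{w_{P'},\Pet}=n-k'<n-k$. Thus $\mathcal{J}_{w_P,\Pet}$ is the unique top-dimensional stratum, it is open in $R_{w_P}$ (being the intersection of $R_{w_P}$ with the open cell $Bw_PB/B\subseteq X_{w_P}$), and $R_{w_P}\setminus\mathcal{J}_{w_P,\Pet}$ is closed of strictly smaller dimension.

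To upgrade from ``top-dimensional'' to ``dense,'' I would use the $S$-action. A direct computation shows $S$ normalizes $U_P$ (conjugation by $s(t)$ scales each superdiagonal uniformly), so $\mathcal{J}_{w_P,\Pet}$, and hence $\overline{\mathcal{J}_{w_P,\Pet}}$, is $S$-stable. Since Lemma~\ref{prop:Petjust}(V) identifies $\mathcal{J}_{w_{P'},\Pet}$ as the attracting Bia\l{}ynicki--Birula cell of $w_{P'}B$ under $s(t)$ as $t\to\infty$, it suffices to verify $w_{P'}B\in\overline{\mathcal{J}_{w_P,\Pet}}$ for each $w_{P'}\leq w_P$: then every orbit through the attracting cell of $w_{P'}B$ lies in the $S$-stable set $\overline{\mathcal{J}_{w_P,\Pet}}$, forcing $\mathcal{J}_{w_{P'},\Pet}\subseteq\overline{\mathcal{J}_{w_P,\Pet}}$ and hence $R_{w_P}=\overline{\mathcal{J}_{w_P,\Pet}}$.

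\textbf{Main obstacle.} The delicate step is producing the degeneration $w_{P'}B\in\overline{\mathcal{J}_{w_P,\Pet}}$. My approach would be induction on covering relations in the refinement poset (where $P\lessdot P'$ corresponds to splitting a single block of $P$), constructing in each cover step an explicit one-parameter family inside $U_P\cdot w_PB$ whose limit lies in $\mathcal{J}_{w_{P'},\Pet}$ by taking a sequence of matrix entries of $U_P$ to infinity while rescaling via $S$; computations like those verifying $\text{Pet}_n=\overline{U'\cdot w_0B}$ in Lemma~\ref{prop:Petjust}(II) serve as a model. An alternative would extract the needed closure relation from a general closure-relation statement for Tymoczko's pavement of regular nilpotent Hessenberg varieties.
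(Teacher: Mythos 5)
There is a genuine gap, and it sits exactly where you placed your ``main obstacle'' --- but the problem is worse than an unfinished computation, because the reduction you propose before it is logically backwards. You argue: $\overline{{\mathcal J}_{w_P,\Pet}}$ is closed and $S$-stable, ${\mathcal J}_{w_{P'},\Pet}$ is the attracting cell of $w_{P'}B$ under $s(t)$, $t\to\infty$, hence it suffices to check $w_{P'}B\in\overline{{\mathcal J}_{w_P,\Pet}}$. This is the wrong direction of the Bia\l{}ynicki--Birula flow: for $gB\in{\mathcal J}_{w_{P'},\Pet}$ one has $w_{P'}B\in\overline{S\cdot gB}$, not $gB\in\overline{S\cdot w_{P'}B}$ (the latter is just $\{w_{P'}B\}$, a fixed point). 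Membership of a fixed point in a closed $S$-stable set says nothing about the points that flow \emph{into} it --- compare $\{0\}\subset{\mathbb P}^1$ with the standard ${\mathbb C}^{\star}$-action, whose attracting cell is all of ${\mathbb A}^1$. So even after producing the degenerations $w_{P'}B\in\overline{{\mathcal J}_{w_P,\Pet}}$, you would not have shown ${\mathcal J}_{w_{P'},\Pet}\subseteq\overline{{\mathcal J}_{w_P,\Pet}}$, and without that, $R_{w_P}$ could a priori have extra lower-dimensional components hiding in the boundary strata. Everything before this point is fine: the paving $R_{w_P}=\coprod_{w_{P'}\leq w_P}{\mathcal J}_{w_{P'},\Pet}$, the identification of $\leq$ on the $w_{P'}$ with refinement of compositions, and the conclusion that ${\mathcal J}_{w_P,\Pet}$ is the unique open, top-dimensional stratum are all correct; the missing content is precisely the closure relations among strata, which your argument does not deliver.

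The paper avoids this entirely by a different route: using the block form $w_P=C^{(1)}\oplus\cdots\oplus C^{(k)}$ it identifies $X_{w_P}\cong GL_{i_1}/B_{i_1}\times\cdots\times GL_{i_k}/B_{i_k}$ and shows the restriction $\psi$ of this isomorphism carries ${\rm Pet}_{i_1}\times\cdots\times{\rm Pet}_{i_k}$ onto $R_{w_P}$ (the reverse containment being checked stratum by stratum, using that each $U_Qw_QB$ with $w_Q\leq w_P$ decomposes blockwise). Irreducibility and the dimension count $\sum_j(i_j-1)=n-k$ then follow from Kostant's theorem that each ${\rm Pet}_{i_j}$ is irreducible of dimension $i_j-1$ --- i.e., the hard closure statement is outsourced to the known irreducibility of smaller Peterson varieties rather than re-proved. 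If you want to salvage your approach, you must prove ${\mathcal J}_{w_{P'},\Pet}\subseteq\overline{{\mathcal J}_{w_P,\Pet}}$ for every $w_{P'}\leq w_P$ directly (a whole-stratum degeneration, not just of the fixed point); this is essentially equivalent to the blockwise version of ${\rm Pet}_m=\overline{U'\cdot w_0B}$, so you would in effect be redoing the product argument.
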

\begin{proof}
By Lemma~\ref{prop:Petjust}(VI), the
permutation matrix of $w_P$ has block matrix form
\begin{equation} \label{eqn:Pblock} w_P=C^{(1)}\oplus C^{(2)}\oplus\cdots\oplus C^{(k)}\end{equation}
where $C^{(j)}$ is an
$i_j\times i_j$ block matrix that is the ``reverse identity matrix'' of that size.
We claim (and this is a well-known fact) that
\begin{equation}
\label{eqn:aniso}
GL_{i_1} /B_{i_1} \times GL_{i_2}/B_{i_2} \times \cdots\times GL_{i_k}/B_{i_k} \cong X_{w_P}\cong P/B,
\end{equation}
where $i_1+\ldots+i_k=n$, cf.~(\ref{eqn:weyl}) and $B_{i_j}$ is the Borel subgroup of
upper triangular matrices inside $GL(i_j)$. To see this, let
$F_{\bullet}^{(t)}\in GL_{i_t}/B, \ \ 1\leq t\leq k$
and consider the map $\phi:GL_{i_1} /B_{i_1} \times GL_{i_2}/B_{i_2} \times \cdots\times GL_{i_k}/B_{i_k} \to X_{w_P}$ defined by
\[\phi: (F_{\bullet}^{(t)})_{t=1}^k \mapsto \left(F_{\bullet}^{(1)}, {\mathbb C}^{i_1}\oplus F_{\bullet}^{(2)},
{\mathbb C}^{i_1}\oplus{\mathbb C}^{i_2}\oplus F_{\bullet}^{(3)},\ldots, {\mathbb C}^{i_1}\oplus {\mathbb C}^{i_2}\oplus\cdots \oplus
{\mathbb C}^{i_{k-1}}\oplus F_{\bullet}^{(k)}\right)\in GL_n/B,\]
where
$V\oplus F_{\bullet}:=V\oplus F_1\subset V\oplus F_2\subset\cdots$.
We also made the natural identification of $F^{(1)}_{j}\subseteq {\mathbb C}^{i_1}$ with a subspace of
${\mathbb C}^n\cong \bigoplus_{j=1}^k {\mathbb C}^{i_j}$, etc.

Clearly, the image of $\phi$ is $X_{w_P}$ and that $\phi$ is an isomorphism.
Thus (\ref{eqn:aniso}) holds.

The map $\phi$ restricts to
$\psi:{\rm Pet}_{i_1}\times \cdots \times {\rm Pet}_{i_k}\to GL_n/B$.
The image sits inside $X_{w_P}$ since that is true of $\phi$.
For each $t$ we have $N\cdot ({\mathbb C}^{i_1}\oplus {\mathbb C}^{i_2}\oplus\cdots \oplus
{\mathbb C}^{i_t})\subset {\mathbb C}^{i_1}\oplus {\mathbb C}^{i_2}\oplus\cdots \oplus
{\mathbb C}^{i_t}$ (where ${\mathbb C}^{i_1}\oplus {\mathbb C}^{i_2}\oplus\cdots \oplus
{\mathbb C}^{i_t}$ is interpreted as a subspace of ${\mathbb C}^n$ in the manner indicated above).
Let $N^{(j)}$ be the standard regular nilpotent matrix for $GL(i_j)$. Then by assumption,
$N^{(j)}\cdot F_{r}^{(j)}\subset F_{r+1}^{(j)}$. Although of course the block matrix
$N^{(1)}\oplus N^{(2)}\oplus \cdots \oplus N^{(k)}$ is not equal to $N$, it is easy to
see from the definitions that the image of $\psi$ is inside $\Pet$.
Hence,
\begin{equation}
\label{eqn:factors}
\psi({\rm Pet}_{i_1}\times {\rm Pet}_{i_2}\times \cdots \times {\rm Pet}_{i_k}) \subseteq R_{w_P}
\end{equation}
(as sets). It remains to demonstrate the reverse containment. Now,
\[R_{w_P}=X_{w_P}\cap \Pet = \coprod_{w\leq w_P} (BwB/B) \cap \Pet=\coprod_{w_Q\leq w_P}
(B{w_Q}B/B)\cap \Pet=\coprod_{w_Q\leq w_P} U_Q w_QB/B,\]
where we have used Lemma~\ref{prop:Petjust} (VI) and (IV). We have also used the fact that
$w_Q B\in R_{w_P}$ if and only if $w_Q\leq w_P$.
In this case, the permutation matrix for $w_Q$ takes the form
\begin{equation} \label{eqn:Qblock} w_Q={\widetilde C}^{(1)}\oplus\cdots\oplus {\widetilde C}^{(k)} \end{equation}
where ${\widetilde C}^{(j)}$ occupies the same position
as the block $C^{(j)}$. Thus, the desired containment holds if we can show
$U_Q w_QB\subseteq \psi({\rm Pet}_{i_1}\times {\rm Pet}_{i_2}\times\cdots\times {\rm Pet}_{i_k})$
for each $w_Q\leq w_P$. However, this is clear from (\ref{eqn:Qblock}) and the definition of $\psi$.

Hence $R_{w_P}$ is irreducible, as each factor on the lefthand side of (\ref{eqn:factors}) is irreducible (and we are
working over ${\mathbb C}$).
The dimension claim for $R_{w_P}$ holds since ${\rm dim}({\rm Pet}_i)=i-1$.
\end{proof}

In view of Lemma~\ref{lemma:Rirred} and Theorem~\ref{thm:lciextended}(I) below, we call $R_{w_P}$ a
{\bf Peterson-Schubert variety}; cf. \cite{harada.tymoczko}. We have made checks of the Cohen-Macaulayness property of
Conjecture~\ref{conj:Petmain} for the varieties $R_{w_P}$ when $n\leq 5$.

Referring to (\ref{eqn:Qblock}),
let $w_Q^{(j)}$ denote the permutation in $S_{i_j}$ that corresponds to the matrix $\widetilde{C}^{(j)}$. The following extends Theorem~\ref{thm:singlocus} and Corollary~\ref{thm:first}.
\begin{theorem}
\label{thm:lciextended}
\begin{itemize}
\item[(I)] $R_{w_P}$ is a reduced, local complete intersection (and hence Cohen-Macaulay and Gorenstein).
\item[(II)] $R_{w_P}$ is singular if and only if $w_{P}$ contains the pattern $321$.
\item[(III)] The singular locus of $R_{w_P}$ is given by
${\rm Sing}(R_{w_P})=\coprod_{w_Q} {\mathcal J}_{w_Q,R_{w_P}}$,
where $w_Q\leq w_P$ and at least one of the $w_{Q}^{(j)}$ contains either of the patterns $123$ or $2143$.
\end{itemize}
\end{theorem}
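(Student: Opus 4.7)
The plan is to upgrade the set-theoretic identification from the proof of Lemma~\ref{lemma:Rirred} to a scheme-theoretic isomorphism
\[\psi:{\rm Pet}_{i_1}\times {\rm Pet}_{i_2}\times\cdots\times {\rm Pet}_{i_k}\stackrel{\sim}{\longrightarrow}R_{w_P}.\]
Once this is established, all three claims follow from the corresponding facts about the individual factors ${\rm Pet}_{i_j}$ by standard properties of products.

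For the scheme-theoretic step I would work on each patch ${\mathcal N}_{w_Q,GL_n/B}$ with $w_Q\leq w_P$ and use an analogue of Proposition~\ref{prop:Richardson}: scheme-theoretically, $I_{w_Q,R_{w_P}}=I_{w_Q,\Pet}+I_{w_Q,X_{w_P}}$. Because $w_P$ is maximal in $W_P$, the Schubert variety $X_{w_P}$ is $P/B$ embedded in $GL_n/B$, and its patch ideal is generated exactly by the entries of $Z^{(w_Q)}$ strictly below the block diagonal determined by (\ref{eqn:Pblock}). Setting these variables to zero in the Peterson generators $g_{k,j}$ of (\ref{gformula}) decouples the system into $k$ independent subsystems, one per block. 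Each subsystem is (after relabeling) the patch ideal, in the sense of Theorem~\ref{thm:main1}, of a smaller Peterson variety ${\rm Pet}_{i_j}$ corresponding to the block $B^{(j)}$. Hence $I_{w_Q,R_{w_P}}$ matches the patch ideal of the product of the patches of the ${\rm Pet}_{i_j}$'s, furnishing a local scheme isomorphism which globalizes to $\psi$.

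Granting $\psi$, statement (I) is immediate: a finite product over ${\mathbb C}$ of local complete intersection (resp.\ Cohen-Macaulay, Gorenstein) varieties inherits those properties, since the Koszul complex of the product is the tensor product of the Koszul complexes of the factors, and each ${\rm Pet}_{i_j}$ satisfies them by Corollary~\ref{thm:first}. For (II), the product is smooth iff every factor is, and ${\rm Pet}_m$ is smooth iff $m\leq 2$ (directly, or by Theorem~\ref{thm:singlocus}); the existence of some $i_j\geq 3$ is exactly the combinatorial condition that $w_P$, assembled from decreasing blocks of sizes $i_1,\ldots,i_k$ as in Lemma~\ref{prop:Petjust}(VI), contain the pattern $321$. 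For (III), the singular locus of the product decomposes as
\[\mathrm{Sing}\left(\prod_{j} {\rm Pet}_{i_j}\right)=\bigcup_{j=1}^{k}\, {\rm Pet}_{i_1}\times\cdots\times \mathrm{Sing}({\rm Pet}_{i_j})\times\cdots\times {\rm Pet}_{i_k}.\]
Under $\psi$, the cell ${\mathcal J}_{w_Q,R_{w_P}}$ corresponds to the product of cells $\prod_j {\mathcal J}_{w_Q^{(j)},{\rm Pet}_{i_j}}$, so applying Theorem~\ref{thm:singlocus}(III) factor-by-factor translates directly to the condition that at least one $w_Q^{(j)}$ contains $123$ or $2143$.

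The main obstacle is the scheme-theoretic matching of patch ideals in the first step: one must verify carefully that after killing the off-block-diagonal variables the Peterson generators $g_{k,j}$ separate cleanly into $k$ block-by-block subsystems, with no ``cross-block'' relations surviving. The equations coupling the last column of one block with the first column of the next require particular attention, but the block structure of $w_P$ from Lemma~\ref{prop:Petjust}(VI) and the shape of $N$ should make this manageable. Once this separation is verified, the dimensions match (they sum to $n-k$, consistent with Lemma~\ref{lemma:Rirred}), the combined ideal is a complete intersection with the correct number of generators, and the remainder of the argument is essentially bookkeeping.
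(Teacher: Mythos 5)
Your proposal is correct and follows essentially the same route as the paper: reduce to the patch at $w_Q$, write $I_{w_Q,R_{w_P}}=I_{w_Q,X_{w_P}}+I_{w_Q,\Pet}$, observe via Proposition~\ref{prop:Richardson} that the Schubert summand is generated by the off-block-diagonal variables, and verify that modulo these the Peterson generators decouple into the block Peterson patch ideals on disjoint variable sets (the paper's Claim~\ref{claim:equalityofideals} carries out exactly the case analysis you flag as the main obstacle, with the cross-block columns $j'=b-1,b$ handled by showing those generators lie entirely in the Schubert ideal). Parts (II) and (III) are then deduced from the product structure exactly as you describe.
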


\begin{proof}
(I): We want to show that the neighbourhood of $gB\in R_{w_P}$ given by ${\mathcal N}_{g,X_{w_P}}\cap {\mathcal N}_{g,\Pet}$ is
reduced and a complete intersection.
Since $gB\in \Pet$, then $gB=bw_QB$
for some $b\in U_Q$, by Lemma~\ref{prop:Petjust}(IV).
By the discussion of Section~2, we have that ${\mathcal N}_{g,\Pet}={\mathcal N}_{w_Q,\Pet}$ and similarly ${\mathcal N}_{g,X_{w_P}}={\mathcal N}_{w_Q,X_{w_P}}$. Therefore,
${\mathcal N}_{g,X_{w_P}}\cap {\mathcal N}_{g,\Pet}$
and ${\mathcal N}_{w_Q,X_{w_P}}\cap {\mathcal N}_{w_Q,\Pet}$ are equal.
So, it suffices to assume $g=w_Q$.

By the definition of $R_{w_P}=X_{w_P}\cap \Pet$ we have
$I_{w_Q,R_{w_P}}=I_{w_Q,X_{w_P}}+I_{w_Q,\Pet}$.
Referring to (\ref{eqn:Qblock}), we have the finer block decomposition
\begin{equation}
\label{eqn:finerdecomp}
{\widetilde C}^{(j)}=D^{(j,1)}\oplus\cdots\oplus D^{(j,f)}
\end{equation}
where each $D^{j,g}$ is a reverse identity matrix. That is,
${\widetilde C}^{(j)}= C^{(j)}$ if and only if $f=1$.

\begin{claim}
\label{claim:equalityofideals}
The following equality of ideals holds
\begin{equation}
\label{eqn:blocktolci}
I_{w_Q,X_{w_P}}+I_{w_Q,\Pet}=I_{w_Q,X_{w_P}}+\sum_{j=1}^{k} {\hat I}_{w_{Q}^{(j)}, {\rm Pet}_{i_j}},
\end{equation}
where ${\hat I}_{w_{Q}^{(j)}, {\rm Pet}_{i_j}}$ is the patch ideal for $w_{Q}^{(j)}\in {\rm Pet}_{i_j}$, and where we have embedded the ring
${\mathbb C}[Z^{(w_Q)}]$ into ${\mathbb C}[Z^{w_P}]$ using the variables of
$Z^{(w_P)}$ in the region occupied by ${\widetilde C}^{(j)}$ in the obvious manner.
\end{claim}
\begin{proof}
Recall that by Proposition~\ref{prop:Richardson}, the generators of $I_{w_Q,X_{w_P}}$ consist of all variables in the region
below the blocks $C^{(j)}$. Next, consider any generator $g_{k',j'}$ of $I_{w_Q,\Pet}$ as described by (\ref{eqn:theideal}). Suppose $j'$ is one of the columns
$a,a+1,\ldots,b$ occupied by $C^{(j)}$. Note that by the construction of $g_{k',j'}$,
row $k'$ cannot be strictly above the topmost row occupied by $C^{(j)}$ (since there is a $1$ to the left of column $j'$ and
in row $k'$). There are two cases:

\noindent
\emph{Case 1: $j'\leq b-2$}: (i) First suppose row $k'$ is strictly south of the bottom row occupied by $C^{(j)}$. Then since $j'\leq b-2$, we see $g_{k',j'}=\alpha_{j',1}z_{k',1}+\cdots+\alpha_{j',j'+1}z_{k',j'+1}-z_{k'+1,j'}\in I_{w_Q,X_{w_P}}$ since
$z_{k',1},\ldots,z_{k',j'+1},z_{k'+1,j'}\in  I_{w_Q,X_{w_P}}$.

(ii) If row $k'$ is one of the rows occupied by $C^{(j)}$, then
each variable of $Z^{(w_Q)}$, in that row and in columns $1,2,\ldots,a-1$,
 is in $I_{w_Q,X_{w_P}}$. Therefore, one can decompose $g_{k',j'}={\overline g}_{k',j'}+g_{k',j'}^{\vee}$ where ${\overline g}_{k',j'}\in I_{w_Q,X_{w_P}}$
and $g_{k',j'}^{\vee}$ is one of the generators (\ref{eqn:theideal}) of ${\hat I}_{w_{Q}^{(j)}, {\rm Pet}_{i_j}}$. Conversely, every generator of ${\hat I}_{w_{Q}^{(j)}, {\rm Pet}_{i_j}}$
gives rise to a generator of $I_{w_Q,\Pet}$ in this manner.

\smallskip
\noindent
\emph{Case 2: $j'=b-1$ or $j'=b$}: By the construction of $g_{k',j'}$, we see that row $k'$ must be strictly south of the rows occupied by $C^{(j)}$. Then $g_{k',j'}\in  I_{w_Q,X_{w_P}}$,
just as in Case 1(i).

The equality (\ref{eqn:blocktolci}) then follows from these considerations.
\end{proof}

$X_{w_P}$ is smooth and hence a local complete intersection. By Corollary~\ref{thm:first} each ${\rm Pet}_{i_j}$ is a complete intersection. Since Claim~\ref{claim:equalityofideals} expresses $I_{w_Q,X_{w_P}}+I_{w_Q,\Pet}$ as
a sum of ideals on disjoint variable sets, each ideal being a radical, complete intersection, (I) holds.

\excise{The ideal $I_{w_Q, X_{w_P}}$ has
 $\sum_{1\leq a<b\leq k} i_a i_b$ generators. Also, each ${\hat I}_{w_{Q}^{(j)}, {\rm Pet}_{i_j}}$ is generated by
 ${i_j -1 \choose 2}={i_j \choose 2}-(i_j-1)$
 generators. Thus total number of generators is
\[\sum_{j=1}^k \left({i_j\choose 2}-(i_j-1)\right) + \sum_{1\leq a<b\leq k}i_a i_b = \sum_{j=1}^k {i_j\choose 2} -(n-k) +
\sum_{1\leq a<b\leq k}i_a i_b,\]
where we have just used $i_1+i_2+\ldots+i_k=n$. We claim the righthand side equals ${n \choose 2}-(n-k)$; that is:
\begin{equation}
\label{eqn:committee}
\sum_{j=1}^k {i_j\choose 2} +
\sum_{1\leq a<b\leq k}i_a i_b={n\choose 2}.
\end{equation}
However, (\ref{eqn:committee}) is a textbook combinatorial exercise in the
``committee argument.'' The righthand side counts the number of ways
to choose two ``leaders'' among $n$. The lefthand side counts the same thing, by first dividing the $n$ candidates into $k$ groups of size $i_1,i_2,\ldots,i_k$ and either choosing the two leaders to be from the same group, or from two different groups.

Since by Lemma~\ref{lemma:Rirred}, ${n\choose 2}-(n-k)$ is precisely the codimension of $I_{w_Q,R_{w_P}}$ in ${\mathbb C}^{n\choose 2}$,
our count of generators says this ideal is a complete intersection
and thus Cohen-Macaulay and Gorenstein. To show that it is radical,
it remains to show that it defines a generically reduced scheme. By Lemma~\ref{lemma:Rirred} we know
$R_{w_P}$ is irreducible. Hence it suffices to find a smooth point, by (P.II)(i).

Recall that in the course of proving Theorem~\ref{thm:main1} we argued there exists a smooth point $p\in Y_{w_P,{\rm Pet}_n}$;
see (\ref{eqn:supposepoint}). This point is interpreted as a $n\times n$ matrix of the form $Z^{(w_P)}$. Let $p^{(j)}\in Y_{w_Q^{(j)}, {\rm Pet}_{i_j}}$ be such a point. Then by (\ref{eqn:blocktolci}) we similarly conclude that the block matrix
$p^{(1)}\oplus p^{(2)}\oplus\cdots \oplus p^{(k)}$ is a smooth point of
\[{\mathcal N}_{w_Q,X_{w_P}}\cap {\mathcal N}_{w_Q, \Pet}\cong {\rm Spec}\left({\mathbb C}[Z^{w_Q}]/I_{w_Q,R_{w_P}}\right).\]
Thus ${\mathcal N}_{w_Q,X_{w_P}}\cap {\mathcal N}_{w_Q, \Pet}$ is reduced, as desired.}

(II): First suppose that $w_P$ contains the pattern $321$.  Hence, one of the factors ${\rm Pet}_{i_j}$ in
\[X_{w_P} \cap {\rm Pet}_n \cong {\rm Pet}_{i_1} \times {\rm Pet}_{i_2} \times \cdots \times {\rm Pet}_{i_k}\]
is isomorphic to the singular ${\rm Pet}_m$ for some $m \geq 3$.
Conversely, suppose that $R_{w_P}$ is singular.  Therefore one of the factors ${\rm Pet}_{i_j}$ is singular (since a product of smooth varieties is smooth).  Since $\rm{Pet_2}$ is smooth, we deduce that $i_i \geq 3$.
Hence $w_P$ contains $321$.

(III): The discussion of (I), and specifically (\ref{eqn:blocktolci}), imply
\[{\mathcal N}_{g, R_{w_P}} \cong
{\mathcal N}_{g^{(1)},{{\rm Pet}_{i_1}}}\times
{\mathcal N}_{g^{(2)},{{\rm Pet}_{i_2}}}\times\cdots\times
{\mathcal N}_{g^{(k)},{{\rm Pet}_{i_k}}},\]
where $g=bw_Q$ for $b\in U_Q=B^{(1)}\oplus B^{(2)}\oplus\cdots\oplus B^{(k)}$ where each $B^{(j)}\in U_{Q^{(j)}}$
and $Q^{(j)}$ is the parabolic subgroup of $GL_{i_j}$ corresponding to $w_Q^{(j)}$.
Writing $b=b_1\oplus b_2\oplus \cdots \oplus b_k$ where $b_j\in B^{(j)}$, in the above isomorphism we set
$g^{(j)}=b_j\cdot w_Q^{(j)}$. Now
${\mathcal N}_{{g}, R_{w_P}}$ is smooth if and only if each ${\mathcal N}_{g^{(j)},{{\rm Pet}_{i_j}}}$ is smooth.
Then the claim follows from Theorem~\ref{thm:singlocus}.
\end{proof}

\section*{Acknowledgements}
We thank Li Li for questions that
motivated us to initiate this project, as well as for
other highly useful remarks. We are grateful to Allen Knutson and Alexander Woo for in depth discussion/comments on a draft. We thank Anders Buch,
Jim Carrell, Elizabeth Csima, Alex Fink, William Haboush, Megumi Harada, June Huh, Mannoj Kummini, Kostanze Rietsch, Hal Schenck,
Alexandra Seceleanu and Julianna Tymoczko
for very helpful comments and discussions. We also thank the anonymous referees for
detailed comments to an earlier version, including simplifications/corrections to Section~7.3.
This work was partially completed during two workshops at the American
Institute of Mathematics, the first on ``Localization techniques in equivariant cohomology'' in March 2010
and the second on ``Real algebraic systems'' in October 2010.  We made use of {\tt SAGE}, {\tt Singular} and {\tt Macaulay 2} in our
computations.
EI was partially supported by NSF VIGRE grant DMS-0602242. AY was partially supported by NSF grant DMS-0901331.


\begin{thebibliography}{9999999999999}
\bibitem[Bia73]{Bialynicki-Birula} A.~Bia\l ynicki-Birula, \emph{Some theorems on actions of algebraic groups},
Ann.~of~Math. {\bf 98}~(1973), no. 2, 480 -- 497.
\bibitem[BilCos10]{Billey.Coskun}
S.~Billey and I.~Coskun, \emph{Singularities of generalized Richardson varieties}, preprint. \textsf{arXiv:1008.2785}.
\bibitem[BilLak00]{Billey.Lakshmibai}
S. Billey and V. Lakshmibai, \emph{Singular loci of Schubert varieties}, Progr. Math.  \textbf{182.} Birkhäuser, Boston, MA, 2000.
\bibitem[Bri05]{Brion:notes} M.~Brion, \emph{Lectures on the geometry of flag varieties},
Topics in cohomological studies of algebraic varieties, 33--85, Trends Math.
Birkh\"{a}user, Basel, 2005.
\bibitem[BriKum05]{Brion.Kumar} M.~Brion and S.~Kumar,
\emph{Frobenius splitting methods in geometry and representation theory},
{\bf 231} Birkhauser, Boston 2005.
\bibitem[BruHer93]{Bruns.Herzog} W.~Bruns and J.~Herzog, \emph{Cohen-Macaulay rings},
Cambridge Studies in Advanced Mathematics, 39. Cambridge University Press, Cambridge, 1993.
\bibitem[CarKut03]{Carrell.Kuttler} J.~Carrell and J.~Kuttler, \emph{Smooth points of $T$-stable varieties in $G/B$ and the Peterson map}, Invent.~Math. {\bf 151}~(2003), no. 2, 353--379.
\bibitem[ChrGin97]{Chriss.Ginzburg} N.~Chriss and V.~Ginzburg,
\emph{Representation theory and complex geometry}, Birkh\"{a}user,
Boston, MA, 1997.
\bibitem[Cio99]{Ciocan} I.~Ciocan-Fontanine, \emph{On quantum cohomology rings of partial flag varieties}, Duke Math.~J. {\bf 98}~(1999), no.~3, 485--523.
\bibitem[DeMProSha92]{DPS} F. De Mari, C. Procesi, M. Shayman,\emph{ Hessenberg varieties}, Trans. Amer.
Math. Soc. {\bf 332}~(1992) 529--534.
\bibitem[Eis95]{Eisenbud} D.~Eisenbud, \emph{Commutative algebra with a view toward algebraic geometry},
Springer-Verlag, New York, 1995.
\bibitem[FinSpe10]{Fink.Speyer} A.~Fink and D.~Speyer, \emph{$K$-classes of matroids and equivariant localization}, 23rd International Conference on Formal Power Series and
Algebraic Combinatorics~(FPSAC 2011), 339–350, Discrete Math. Theor. Comput. Sci. Proc., AO, Assoc. Discrete Math. Theor. Comput. Sci., Nancy, 2011.
\bibitem[FomGelPos97]{FGP} S.~Fomin, S.~Gelfand and A.~Postnikov, \emph{Quantum Schubert polynomials}, J.~Amer.~Math.~Soc. {\bf 10}~(1997), 565--596.
\bibitem[Fre10]{Fre} L.~Fresse, \emph{Singular components of Springer fibers in
the two-column case.} Ann. Inst. Fourier (Grenoble) {\bf 59}~(2009), no. 6, 2429--2444.
\bibitem[FreMel10]{Fresse} L.~Fresse and A.~Melnikov, \emph{On the singularity of the irreducible components of a Springer fiber in ${\mathfrak{s}\mathfrak{l}_n}$},
Selecta~Math. (N.S.)~{\bf 16}~(2010), no. 3, 393--418.
\bibitem[Ful91]{Fulton} W.~Fulton, \emph{Flags, Schubert polynomials, degeneracy loci and determinantal formulas},
Duke~Math.~J. ~{\bf 65}~(1991), no.~3, 381--420.
\bibitem[GorKotMac98]{GKM} M.~Goresky, R.~Kottwitz and R.~MacPherson,
\emph{Equivariant cohomology, Koszul duality, and the localization theorem}, Invent. Math. {\bf 131}~(1998), 25--83.
\bibitem[HarTym11]{harada.tymoczko} M.~Harada and J.~Tymoczko,
\emph{A positive Monk formula in the $S^1$-equivariant cohomology of type $A$ Peterson varieties}, Proc. Lond. Math. Soc. (3) {\bf 103}~(2011), no. 1, 40--72.
\bibitem[Hum64]{H} J.~Humphreys, {\sl Linear Algebraic Groups\/}, Grad.\
Texts in Math.\ {\bf 21}, Springer-Verlag, New York, 1964.
\bibitem[KazLus79]{KL}
D. Kazhdan, G. Lusztig, \emph{Representations of Coxeter groups and Hecke algebras}, Invent.~Math. {\bf 53}~(1979) 165--184.
\bibitem[Knu08]{Knutson:patches}
A.~Knutson, \emph{Schubert patches degenerate to subword complexes}, Transform. Groups {\bf 13}~(2008), no. 3-4, 715 -- 726.
\bibitem[Knu09]{Knutson:frob}
A.~Knutson, \emph{Frobenius splitting and degeneration, especially of
Schubert varieties}, preprint. \textsf{arXiv:0911.4941}
\bibitem[KnuLamSpe10]{KLS} A.~Knutson, T.~Lam and D.~Speyer,
\emph{Projections of Richardson Varieties}, preprint. \textsf{arXiv:0903.3694}
\bibitem[KnuMil05]{Knutson.Miller} A.~Knutson and E.~Miller,
\emph{Gr\"{o}bner geometry of Schubert polynomials}, Ann.~of~Math. (2) {\bf 161}~(2005), no. 3, 1245--1318.
\bibitem[KnuRos99]{KnuRos} A.~Knutson and I.~Rosu,
Appendix to \emph{Equivariant $K$-theory and Equivariant Cohomology}, Math.\ Z.\ \textbf{243}~(1999), no.~3, 423--448.
\bibitem[Kos96]{kostant} B. Kostant, \emph{Flag manifold quantum cohomology, the Toda lattice, and the representation with highest weight},
Selecta Math. (N.S.), {\bf 2 }~(1996), no. ~1, 43--91.
\bibitem[KreLak04]{Kreiman} V.~Kreiman and V.~Lakshmibai, \emph{Multiplicities at Singular Points of Schubert Varieties in the Grassmannian},
Algebra, arithmetic and geometry with applications (West Lafayette, IN, 2000), 553 --563, Springer, Berlin, 2004.
\bibitem[KreRob05]{KreRob} M.~Kreuzer and L.~Robbiano, \emph{Computational commutative algebra. 2}, Springer-Verlag,
Berlin, 2005.
\bibitem[LakLitMag98]{LakMag} V. Lakshmibai, P. Littelmann and P. Magyar, \emph{Standard monomial theory and applications},
in Representation Theories and Algebraic Geometry, A. Broer (ed.), Kluwer Academic Publishers, Boston, 1998.
\bibitem[LiYon10a]{Li.Yong} L.~Li and A.~Yong, \emph{Some degenerations of Kazhdan-Lusztig ideals and multiplicities of Schubert varieties},
Adv.~Math. {\bf 229}~(2012), 633--667.
\bibitem[LiYon10b]{Li.Yong2} L.~Li and A.~Yong, \emph{Drift configurations and Kazhdan-Lusztig polynomials},
Algebra and Number Theory J. {\bf 5}~(2011), no.~5, 595--626.
\bibitem[MilStu04]{Miller.Sturmfels} E.~Miller and B.~Sturmfels, \emph{Combinatorial commutative algebra},
Graduate Texts in Mathematics Vol.~{\bf 227}, Springer-Verlag, New York 2004.
\bibitem[PerSmi10]{PerSmi10} N. Perrin, E. Smirnov, {\it Springer fiber components in the two columns case for types A and D are normal}, \textsf{arXiv:0907.0607.}
\bibitem[Pet97]{Peterson} D.~Peterson, \emph{Quantum cohomology of $G/P$}, Lecture Course, M.~I.~T., Spring Term 1997.
\bibitem[Rie01]{Konnie:duke} K.~Rietsch, \emph{Quantum cohomology rings of Grassmannians and total positivity},
Duke Math. J. {\bf 110}~(2001), no. 3, 523--553.
\bibitem[Rie03]{reitsch} K.~Rietsch, \emph{Totally positive Toeplitz matrices and quantum cohomology of partial flag varieties}, J. Amer.~Math.~Soc.,
{\bf 16}~(2003), no. 2, 363--392 (electronic).
\bibitem[SomTym06]{SomTym06} E.~Sommers and J.~Tymoczko. \emph{ Exponents for {$B$}-stable ideals}, Trans. Amer. Math. Soc., {\bf 358}~(2006), no. 8, 3493--3509 (electronic).
\bibitem[Sni11]{Snider} M.~Snider, Ph.D thesis, Cornell University, 2011.
\bibitem[Tym06]{Tym06a} J.~Tymoczko,
\emph{Linear conditions imposed on flag varieties}, Amer.~J.~Math., {\bf 128}~(2006), no. 6, 1587--1604.
\bibitem[Tym07]{Tym07} J.~Tymoczko,
\emph{Paving {H}essenberg varieties by affines}, Selecta Math. (N.S.) {\bf13}~(2007),~no. 2, 353--367.
\bibitem[Var79]{Var79} J.A. Vargas,  \emph{Fixed points under the action of unipotent elements of
${\rm {SL}_{n}}$ in the flag variety}, Bol. Soc. Mat. Mexicana {\bf 24}~(1979),  1--14.
\bibitem[WooYon08]{WY:govern} A.~Woo and A.~Yong, \emph{Governing singularities of Schubert varieties},
J.~Algebra {\bf 320}~(2008), \\no. 2, 495--520.
\bibitem[WooYon09]{WY:KL} A.~Woo and A.~Yong, \emph{A Gr\"{o}bner basis for Kazhdan-Lusztig ideals}, Amer.~J.~Math. {\bf134}~(2012), no. 4, 1089--1137.
\end{thebibliography}
\end{document}